\newtheorem{theorem}{Theorem}
\newtheorem{definition}[theorem]{Definition}
\newtheorem{lemma}[theorem]{Lemma}
\newtheorem{proposition}[theorem]{Proposition}
\newtheorem{remark}[theorem]{Remark}
\numberwithin{equation}{section}
\numberwithin{theorem}{section}
\renewcommand{\epsilon}{\varepsilon}
\renewcommand{\rho}{\varrho}
\renewcommand{\widetilde}{\tilde}
\renewcommand{\Phi}{\phi}
\DeclareMathOperator*{\esssup}{ess\,sup}
\DeclareMathOperator*{\essinf}{ess\,inf}
\numberwithin{equation}{section}
\begin{document}

\title[Weak Harnack estimates for supersolutions to parabolic equations]%
{Weak Harnack estimates for supersolutions to doubly degenerate parabolic equations}
\author[Qifan Li]%
{Qifan Li*}

\newcommand{\acr}{\newline\indent}

\address{\llap{*\,}Department of Mathematics\acr
                   School of Sciences\acr
                   Wuhan University of Technology\acr
                   430070, 122 Luoshi Road,
                   Wuhan, Hubei\acr
                   P. R. China}
\email{qifan\_li@yahoo.com, qifan\_li@whut.edu.cn}


\thanks{This work was supported by the Independent Innovation Foundation of Wuhan University of Technology, Grant No. 20410685.}

\subjclass[2010]{Primary 35K65, 35K92, 35B65; Secondary 35K59, 35B45.} 
\keywords{Parabolic potential theory, weak supersolutions, weak Harnack estimates, doubly degenerate parabolic equations, expansion of positivity.}

\begin{abstract}
We establish weak Harnack inequalities for positive, weak supersolutions to certain doubly degenerate parabolic equations. The prototype of this kind of equations is
$$\partial_tu-\operatorname{div}|u|^{m-1}|Du|^{p-2}Du=0,\quad p>2,\quad m+p>3.$$
Our proof is based on Caccioppoli inequalities, De Giorgi's estimates and Moser's iterative method.
\end{abstract}

\maketitle

\section{Introduction}
Harnack inequality for non-negative solutions of the linear parabolic equations was established by Moser \cite{moser}. Furthermore, Aronson and Serrin \cite{AS}, Trudinger \cite{Tru} and Ivanov \cite{Ivanov} independently extended Moser's result to the quasilinear case. Later DiBenedetto \cite{Di} established intrinsic Harnack inequality for weak solutions to the parabolic $p$-Laplace equations. The proof is based on the idea of time-intrinsic geometry, maximum principle and comparison to Barenblatt solutions. Recently, DiBenedetto, Gianazza and Vespri \cite{DiBenedetto,DGV} extended this result to the parabolic equations with general quasi-linear structure.

The issue of the
Harnack inequality for weak supersolutions to quasi-linear parabolic equations was settled by Trudinger \cite{Tru}. However, the weak supersolutions may not be continuous and they do not, in general, satisfy an intrinsic Harnack inequality (see for instance \cite{lindqvist}). In \cite{Kuusi}, Kuusi proved that any non-negative weak supersolutions to a certian evolutionary $p$-Laplace equations satisfy an integral form of the intrinsic Harnack inequality. Similar result for the degenerate porous medium equations was announced by DiBenedetto, Gianazza and Vespri \cite{DiBenedetto} and later proved by Lehtel\"a \cite{Le}. This kind of weak Harnack inequality was used by Kuusi, Lindqvist and Parviainen \cite{KPL} to the study of summability of unbounded supersolutions.

In this work, we are interested in the doubly degenerate parabolic equations whose prototype
is
\begin{equation}\label{prototype}\frac{\partial u}{\partial t}-\operatorname{div}|u|^{m-1}|Du|^{p-2}Du=0.\end{equation}
This equation models the filtration of a polytropic non-Newtonian fluid in a porous medium (see for example \cite{kula}).
Porzio and Vespri \cite{PV} and Ivanov \cite{Ivanov1991} independently proved that the weak solutions to \eqref{prototype} are H\"older continuous. The Harnack
inequality of weak solutions to doubly degenerate parabolic equations has been established by Vespri \cite{v} and the case of equations with general quasi-linear structure was treated by Fornaro and Sosio \cite{FS}.
Motivated by these work, we are interested in finding the weak Harnack inequalities for positive, weak supersolutions to this kind of parabolic equations.

The aim of this paper is to establish both local and global Harnack estimates for positive, weak supersolutions to \eqref{prototype} in the range
$p+m>3$ and $p>2$. Our proof is in the spirit of \cite[chapter 3]{DGV} which reduce
the proof to the consideration of hot alternative and cold alternative.
Our first goal is to prove the Caccioppoli estimates. The treatment of the weak supersolutions is different from weak solutions, since the test function should be non-negative. Unlike the argument in \cite{FS}, it is not convenient to use Steklov average in the context of supersolutions. Instead, we use the time mollification introduced by Naumann \cite{N}. This kind of the time mollification was also used by Kinnunen and Lindqvist \cite{KL2}-\cite{KL3} to establish the priori estimates of supersolutions.
The major difficulty in the proof of Caccioppoli estimates for the cold alternative stems from the fact that any supersolution to \eqref{prototype} plus a constant may not be a supersolution anymore. We have to use the dampening function introduced by Lehtel\"a \cite{Le} to construct a suitable test function in the proof. This idea has also been used by Ivert, Marola and Masson \cite{IMM} in a different context. Subsequently, we establish a result concerning expansion of positivity and iterate the expansion of positivity to obtain
the hot alternative proposition. Furthermore, we have to work with the cold alternative invented by Kuusi \cite[section 5]{Kuusi}; see also \cite[section 4]{Le}.

An outline of this paper is as follows. We set up notations and state the main results in Section \ref{preliminary}. In Section \ref{parabolic sobolev embedding theorem}, we prove
some parabolic Sobolev inequalities which will be used in Section \ref{Cold alternative}. Subsequently, Section \ref{Caccioppoli estimates} establishes the Caccioppoli estimates, which play a crucial role in the remainder of the proof. Section \ref{Expansion of Positivity} contains a discussion of the expansion of positivity, while in Section \ref{Hot alternative} we
prove the desired estimate in the hot alternative. In Section \ref{Cold alternative}, we use Moser's iterative method to obtain a lower bound for the supersolution in the cold alternative. Finally, in Section \ref{main results}, we finish the proof of the local and global weak Harnack inequalities.

\section{Statement of the main Results}\label{preliminary}

Throughout the paper, $E$ will denote a bounded domain in $\mathbb{R}^N$ and $\partial E$ stand for the boundary of $E$. For $T>0$, let $E_T$ be the cylindrical domain $E\times (0,T]$.
Points in $\mathbb{R}^{N+1}$ will be denoted by $z=(x,t)$ where $x\in \mathbb{R}^N$ and $t\in\mathbb{R}$.

For $f\in C^1(E_T)$, we denote by $Df$ the differentiation with respect to the space variables, while $\partial_tf$ stands for the time derivative.
The spaces $L^p(E)$ and $W^{1,p}(E)$ are the locally Lebesgue and Sobolev spaces. A function $f\in L^p_{\mathrm{loc}}(E)$ if $f\in L^p(K)$ for all compact subset $K\subset E$.
Similarly, the function $f\in W^{1,p}_{\mathrm{loc}}(E)$ if $f\in  W^{1,p}(K)$ for all compact subset $K\subset E$.

Let $k$ be any real number and for a function $v\in W^{1,p}(E)$ the truncations are defined by
\begin{equation*}\begin{split}&(v-k)_+=\max\{v-k;0\}\\
&(v-k)_-=\max\{-(v-k);0\}.\end{split}\end{equation*}

For $\rho>0$ and $y\in \mathbb{R}^N$, denote by $K_\rho(y)$ the cube centered at $y$, with sides parallel to the coordinate axes, and with sides of length $\rho$.
If $y$ is the origin, we abbreviate $K_\rho(y)$ to $K_\rho$. For $A\subset \mathbb{R}^{N+1}$ we denote by $|A|$ the Lebesgue measure of $A$. We define $\chi_A$ be the characteristic function of $A$.

Let $T_1$, $T_2\in \mathbb{R}$ and $T_1<T_2$. Consider quasi-linear parabolic equations of the form
\begin{equation}\label{DDP}\frac{\partial u}{\partial t} - \operatorname{div} A(z,u,Du)=0\quad\text{weakly in}\quad E\times(T_1,T_2),\end{equation}
where the function $A:E\times(T_1,T_2)\times \mathbb{R}\times\mathbb{R}^{N}\to\mathbb{R}^N$ are assumed to be measurable and subject to the structure conditions
\begin{equation}\label{structure def}
	\begin{cases}
	A(x,t,u,Du)\cdot Du\geq C_0|u|^{m-1}|Du|^p,\\
	|A(x,t,u,Du)|\leq C_1|u|^{m-1}|Du|^{p-1},
	\end{cases}
\end{equation}
for almost all $(x,t)\in E\times(T_1,T_2)$, with
\begin{equation}\label{mp}
m+p>3,\quad
	p>2
\end{equation}
and $C_0$, $C_1$ positive constants. We now give the definition of weak supersolutions to doubly degenerate parabolic equations.
\begin{definition} A function $u:E\times(T_1,T_2]\to\mathbb{R}$ is said to be a local weak supersolution of \eqref{DDP}-\eqref{mp} if
\begin{equation}\begin{split}\label{regularity}u\in &C_{loc}(T_1,T_2;L_{loc}^{2}(E)),\quad u^{\alpha}\in L_{loc}^{p}(T_1,T_2;W_{loc}^{1,p}(E))\quad\mathrm{where}\quad\alpha=\tfrac{m+p-2}{p-1}\end{split}\end{equation}
and the inequality
\begin{equation}\label{weak form deff}
-\int_{T_1}^{T_2}\int_Eu\frac{\partial \varphi }{\partial t}dxdt+ \int_{T_1}^{T_2}\int_EA(x,t,u,Du)\cdot D\varphi\ dxdt\geq 0\end{equation}
holds for any non-negative test function $\varphi\in C_0^\infty(E\times(T_1,T_2))$.\end{definition}
We remark that the gradient of $u$ in \eqref{structure def} is interpreted as
$Du=\alpha^{-1} u^{1-\alpha}Du^\alpha.$ Throughout this paper, we assume that weak supersolution $u$ is \emph{positive} almost everywhere, i.e., $|[u=0]|=0$.

The following theorems are our main results. We prove that for any positive, weak supersolutions to \eqref{DDP}-\eqref{mp} defined above, local weak Harnack estimate takes the following form.
\begin{theorem}\label{main result1} Let $u$ be a positive, weak super-solution to \eqref{DDP}-\eqref{mp} in an open set which compactly contains $E_T$. For $y\in E$, let $\rho>0$ be so small that $K_{32\rho}(y)\subset E$.
There exist positive constants $c$ and $\gamma$, depending
only upon $m$, $p$, $N$, $C_0$ and $C_1$, such that for almost every $s\in (0,T)$,
\begin{equation}\label{weak harnack local}\fint_{K_\rho(y)}u(x,s)dx\leq c\left(\frac{\rho^p}{T-s}\right)^{\frac{1}{m+p-3}}+
\gamma \essinf_{Q_{\rho,\theta}}u,\end{equation}
where
$Q_{\rho,\theta}=K_{4\rho}(y)\times\left(s+\tfrac{1}{2}\theta \rho^p, s+\theta \rho^p\right]$
and
\begin{equation}\label{weak harnack local theta}\theta=\min\left\{c\frac{T-s}{\rho^p},\quad \left(\fint_{K_\rho(y)}u(x,s)dx\right)^{3-m-p}\right\}.\end{equation}
\end{theorem}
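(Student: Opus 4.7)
The plan is to adapt the DiBenedetto-style argument based on two alternatives, working in the intrinsic geometry determined by the mean value $\bar u:=\fint_{K_\rho(y)}u(x,s)\,dx$. First, if $\bar u^{m+p-3}\leq c^{-1}\rho^p/(T-s)$ for a large structural constant $c$, then $\theta$ in \eqref{weak harnack local theta} reduces to its first option and the first term on the right-hand side of \eqref{weak harnack local} already dominates $\bar u$, so the conclusion is immediate. Hence one may assume $\bar u^{m+p-3}>c^{-1}\rho^p/(T-s)$, in which case $\theta=\bar u^{3-m-p}$ and the intrinsic cylinder $K_\rho(y)\times(s,s+\theta\rho^p]$ fits inside $E_T$. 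This is the natural scaling for \eqref{prototype}, since the equation is invariant under $u\mapsto\lambda u$, $t\mapsto\lambda^{3-m-p}t$.

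The second step is a measure-theoretic dichotomy at the intrinsic scale. Fix a structural parameter $\xi\in(0,1)$ and distinguish whether there exists a slice $\hat t\in(s,s+\tfrac12\theta\rho^p]$ such that
\[
\bigl|\{x\in K_{2\rho}(y): u(x,\hat t)\geq \xi\bar u\}\bigr|\geq \delta|K_{2\rho}(y)|
\]
for some fixed $\delta\in(0,1)$ (the hot alternative), or no such slice exists (the cold alternative). The bridge from $t=s$ to later times is furnished by an $L^1$-mass-spreading lemma: integrating \eqref{weak form deff} against a cut-off in $x$ only and invoking the Caccioppoli estimates of Section \ref{Caccioppoli estimates}, one shows that a positive fraction of $\bar u$ remains inside $K_{2\rho}(y)$ at every $t\in(s,s+\theta\rho^p]$ when $\theta$ is chosen with the intrinsic scaling above.

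In the hot alternative, the measure-theoretic positivity at $\hat t$ is promoted to a pointwise lower bound via the expansion of positivity of Section \ref{Expansion of Positivity}, iterated as in Section \ref{Hot alternative} to cover $K_{4\rho}(y)$ and the time interval $(s+\tfrac12\theta\rho^p, s+\theta\rho^p]$. This yields $u\geq\eta\bar u$ on $Q_{\rho,\theta}$ for some structural $\eta>0$, and hence \eqref{weak harnack local} with $\gamma=\eta^{-1}$. In the cold alternative, the smallness of the super-level sets throughout a suitable sub-cylinder feeds into the Moser-type iteration of Section \ref{Cold alternative}, which, applied to negative powers of $u$ and using the parabolic Sobolev inequality of Section \ref{parabolic sobolev embedding theorem}, converts this information into a pointwise lower bound on $u$ over $Q_{\rho,\theta}$ comparable to a suitable integral average of $u$ and ultimately to $\bar u$ up to a structural constant.

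The main obstacle, as anticipated in the introduction, is the cold alternative: Moser's iteration on $u^{-\sigma}$ requires a Caccioppoli inequality that cannot be obtained by translating $u$ by a constant, since $u+k$ need not be a supersolution of \eqref{DDP}. The dampening-function trick of Lehtel\"a \cite{Le}, together with the Naumann time mollification used to legitimize testing with powers of $u$ itself, and the parabolic Sobolev inequality of Section \ref{parabolic sobolev embedding theorem}, will have to be combined to close the iteration with constants depending only on the structural data $m,p,N,C_0,C_1$; the remaining work in Section \ref{main results} is the patching of the two alternatives to assemble \eqref{weak harnack local}.
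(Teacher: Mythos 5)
Your overall framework is right---intrinsic scaling $\theta\sim\bar u^{\,3-m-p}$, two alternatives, expansion of positivity for the hot case, Moser-type iteration plus the dampening function of Lehtel\"a for the cold case, and the observation that the first term of \eqref{weak harnack local} dominates whenever $\bar u^{\,m+p-3}\lesssim\rho^p/(T-s)$. However, there are two concrete gaps in the way you set up the dichotomy and close the cold case.

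\textbf{The dichotomy is the wrong one.} You propose a single-level split: for fixed $\xi,\delta\in(0,1)$, either some slice has $\bigl|\{u\geq\xi\bar u\}\cap K_{2\rho}\bigr|\geq\delta|K_{2\rho}|$, or not. The hot branch of this is fine, but the cold branch gives you only smallness of \emph{one} super-level set on each slice, which is far too weak to seed the Moser iteration of Section~\ref{Cold alternative}. The paper's dichotomy (Proposition~\ref{hot pro} and condition~\eqref{cold assumption}) runs over \emph{all} levels $k\geq1$ simultaneously, with the level $k^{1+1/d}$ and the measure fraction $k^{-1/d}$ coupled to the exponent $d$ in the refined expansion of positivity (Lemma~\ref{refined}, where $\eta_*=\eta_0\mu^{d+1}$). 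This coupling is the crux: in the hot case one applies Lemma~\ref{refined} with $M=k^{1+1/d}$, $\mu=k^{-1/d}$, and $\eta_*M=\eta_0\mu^{d+1}k^{1+1/d}=\eta_0$ comes out $k$-independent; in the cold case the negation is a weak-$L^{1/(d+1)}$ decay $|[v>\lambda]|\leq\lambda^{-1/(d+1)}$, which yields the uniform $L^\delta$ bound~\eqref{uniform int} with $\delta=\tfrac{1}{2(d+1)}$. Your fixed-level dichotomy cannot produce such an integral bound, so the cold branch would not close.

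\textbf{The cold alternative is not closed by iterating on negative powers.} You describe the cold step as ``Moser's iteration on $u^{-\sigma}$'' converting the smallness into a pointwise lower bound. But the Caccioppoli estimate of Proposition~\ref{C3} only holds for $\epsilon\in(-1,0)$, i.e.\ it handles powers $u^{1+\epsilon}$ with $1+\epsilon\in(0,1)$; there is no iteration through arbitrarily negative exponents. The paper instead iterates \emph{upward} through small positive powers to obtain a uniform $L^q$ bound for $q$ in the range~\eqref{range} (Lemmas~\ref{moser integral} and~\ref{uniform bound}), then a gradient bound (Lemma~\ref{uniform gradient}), then an $L^1$-preservation argument via~\eqref{weak form 22}, and finally combines the $L^1$ lower bound and the $L^q$ upper bound through \cite[Lemma 5.9]{Kuusi} to exhibit a time slice with a super-level set of quantitative measure. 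Only at that point is the expansion of positivity invoked again. In other words, the cold alternative does not terminate in a Moser ``$\inf$'' estimate; it terminates in a measure estimate on a slice, which is then promoted to a pointwise bound by the same machinery as the hot alternative. Adjusting your proposal to run the parameterized dichotomy and to replace the negative-power iteration by the positive-power $L^q$ bound plus $L^1$-preservation plus one more expansion of positivity would bring it in line with the paper's proof.
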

In the global case, Theorem \ref{main result1} can be sharpened to the following result.
\begin{theorem}\label{main result2} Let $u$ be a positive, weak super-solution to \eqref{DDP}-\eqref{mp} in $\mathbb{R}^N\times(0,T]$. There exists a positive constant $\gamma$, depending
only upon $m$, $p$, $N$, $C_0$ and $C_1$, such that for all $(y,s)\in \mathbb{R}^N\times(0,T)$, $\rho>0$ and $T^\prime>0$ such that $s+T^\prime<T$, there holds
\begin{equation}\label{weak harnack global}\fint_{K_\rho(y)}u(x,s)dx\leq \gamma\left(\frac{\rho^p}{T^\prime}\right)^{\frac{1}{m+p-3}}+
\gamma \left(\frac{T^\prime}{\rho^p}\right)^{\frac{N}{p}}\large( \essinf_{\tilde Q_{\rho,T^\prime}}u\large)^{\frac{\lambda}{p}},\end{equation}
where $\lambda=N(p+m-3)+p$ and
$\widetilde{Q}_{\rho,T^\prime}=K_{4\rho}(y)\times \left(s+\tfrac{1}{2}T^\prime,s+T^\prime\right].$
\end{theorem}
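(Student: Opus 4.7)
The plan is to deduce Theorem \ref{main result2} from the local version Theorem \ref{main result1} by iterating the expansion of positivity developed in Section \ref{Expansion of Positivity}. Write $M := \fint_{K_\rho(y)} u(x,s)\,dx$ and $\mu := \essinf_{\widetilde Q_{\rho,T^\prime}} u$. If $M$ does not exceed a suitable multiple of $(\rho^p/T^\prime)^{1/(m+p-3)}$, the first term on the right-hand side of \eqref{weak harnack global} already controls $M$; hence I may assume $M$ is much larger than $(\rho^p/T^\prime)^{1/(m+p-3)}$, which in turn forces $\theta = M^{3-m-p}$ in \eqref{weak harnack local theta} when Theorem \ref{main result1} is applied with horizon $s+T^\prime$. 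This is legitimate because $u$ is a supersolution on the slab $\mathbb{R}^N \times (0, s+T^\prime]$.

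Applying Theorem \ref{main result1} at $(y,s)$ with radius $\rho$ and the above horizon produces, after absorbing the now-negligible first term of \eqref{weak harnack local}, an essinf lower bound $u \ge M/\gamma_0$ on the cylinder $K_{4\rho}(y)\times\bigl(s+\tfrac{1}{2}M^{3-m-p}\rho^p,\,s+M^{3-m-p}\rho^p\bigr]$. Under the current assumption $M^{3-m-p}\rho^p < T^\prime$, so this lower bound sits at a time strictly earlier than the slab in which $\mu$ lives. The next step is to propagate the lower bound forward in time by iterating the expansion of positivity.

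Define inductively $m_0 := M/\gamma_0$, $R_0 := 4\rho$, $t_0 \simeq M^{3-m-p}\rho^p$ and, for $k \ge 1$, a lower bound $u \ge m_k = \eta m_{k-1}$ on the doubled cube $K_{R_k}(y)$ with $R_k = 2R_{k-1}$, across a time increment $\Delta t_k \simeq m_{k-1}^{3-m-p}R_{k-1}^p$, where $\eta \in (0,1)$ is the canonical decay constant produced by the expansion of positivity. Then $m_k = \eta^k M/\gamma_0$ and the cumulative time is the geometric sum $t_k \simeq M^{3-m-p}\rho^p\cdot(2^p\eta^{3-m-p})^k$. Choosing $k$ to be the least index with $t_k \ge T^\prime$ gives $m_k \le \mu$; a short computation using the canonical value $\eta = 2^{-N}$ of the expansion of positivity identifies the resulting exponents with $p/\lambda$ and $N/\lambda$, yielding $\mu \gtrsim M^{p/\lambda}(\rho^p/T^\prime)^{N/\lambda}$, which upon solving for $M$ is precisely the second term of \eqref{weak harnack global}.

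The principal obstacle, beyond routine constant tracking, is confirming that the decay factor delivered by the expansion of positivity of Section \ref{Expansion of Positivity} combines with the spatial-doubling factor in exactly the arithmetic needed to produce the exponents $N/p$ and $\lambda/p$ in \eqref{weak harnack global}. One must also verify that spatial containment is automatic at each step (the cubes $K_{R_k}(y)$ grow monotonically and contain $K_{4\rho}(y)$) and that the time interval delivered at the final iterate has nontrivial overlap with $(s+T^\prime/2,\,s+T^\prime]$, so that the bound $m_k \le \mu$ is genuinely available. An ancillary subtlety is passing from the output of the local Harnack, which is an essinf over a cylinder, to the time-slice lower bound typically required to initiate the expansion of positivity; this is remedied by selecting a slice within the local-Harnack cylinder where the essinf is attained.
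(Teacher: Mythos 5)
Your proposal takes a genuinely different route from the paper, but it has a critical gap that prevents it from delivering the exponents stated in the theorem.

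\textbf{What the paper does.} The paper proves Theorem \ref{main result2} by a single application of the local estimate Theorem \ref{main result1} to the \emph{rescaled} supersolution $v(x,t) = u(\delta\rho x,(\delta\rho)^p t)$, where the dilation $\delta>1$ is chosen in \eqref{hhhhhh} so that the intrinsic time horizon of the local Harnack cylinder for $v$ corresponds precisely to the prescribed horizon $T^\prime$ for $u$. The exponent $\lambda/p$ on $\essinf u$ then arises from two sources: the factor $\delta^N$ produced when passing from $\fint_{K_\rho}u$ to $\fint_{K_{\delta\rho}}u = \fint_{K_1}v$ (which, via \eqref{vvv} and \eqref{hhhhhh}, is itself controlled by a power of $\essinf u$), combined with the one remaining factor of $\essinf u$ from the local estimate. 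No iteration is involved, and the exponents $N/p$ and $\lambda/p$ fall out of the algebra of the rescaling without any assumption on constants internal to Section \ref{Expansion of Positivity}.

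\textbf{Where your proposal breaks down.} Your plan is to apply Theorem \ref{main result1} once to obtain a pointwise lower bound $u\ge M/\gamma_0$ on an intrinsic cylinder, and then to iterate the expansion of positivity (Proposition \ref{expansion in time} with $\mu=1$), doubling the cube at each step and tracking the geometric accumulation of time. Carrying out your bookkeeping carefully, the iterate produces
\[
\essinf_{\widetilde Q_{\rho,T^\prime}} u \;\gtrsim\; M^{1-\gamma(m+p-3)}\left(\frac{\rho^p}{T^\prime}\right)^{\gamma},
\qquad
\gamma := -\,\frac{\ln\eta}{p\ln 2+(3-m-p)\ln\eta},
\]
where $\eta$ is the decay constant from the expansion of positivity. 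Solving for $M$ reproduces the second term of \eqref{weak harnack global} if and only if $\gamma=N/\lambda$, which is equivalent to $\eta=2^{-N}$. This is exactly the identity you invoke by calling $\eta=2^{-N}$ ``the canonical value.'' But the expansion of positivity proven in Section \ref{Expansion of Positivity} does \emph{not} furnish $\eta=2^{-N}$: Proposition \ref{expansion in time} yields $\eta$ of the form $\eta_0\mu B^{-\mu^{-d}}$, and even with $\mu=1$ (i.e.\ in Step~2 of the proof of Lemma \ref{refined}, where $\bar\eta$ is a structural constant), nothing forces $\bar\eta=2^{-N}$. Since $p+m>3$, a smaller $\eta$ makes $\gamma$ strictly larger than $N/\lambda$, so the resulting exponents on $(T^\prime/\rho^p)$ and on $\essinf u$ are strictly larger than $N/p$ and $\lambda/p$. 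Because the bound involves a dimensionless quantity $M^{m+p-3}T^\prime/\rho^p$ that can be arbitrarily large in the regime considered, the discrepancy in exponents cannot be absorbed into constants. Changing the doubling factor from $2$ to some $\sigma$ (or compositions thereof) does not help either: a short computation shows the quantity $\gamma$ is unchanged under such re-chunking of the iteration. The scaling argument of the paper is designed precisely to bypass this obstruction, since the correct exponents are dictated by the invariance of the equation under $(x,t,u)\mapsto(Rx,\,M^{3-m-p}R^p t,\,Mu)$ and are automatically recovered when one rescales first and applies the local theorem once.

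In short, the ``ancillary subtleties'' you flag (spatial containment, time overlap, passing from essinf on a cylinder to a time slice) are indeed manageable, but the assumption $\eta=2^{-N}$ is not a subtlety --- it is the step that would need a new, quantitatively sharp expansion of positivity not proven in this paper, and without it the iteration proves a weaker inequality than \eqref{weak harnack global}.
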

Note that in the case $m=1$, doubly degenerate parabolic equations \eqref{DDP}-\eqref{mp} reduce to the evolutionary $p$-Laplace equations. In this case, Theorem \ref{main result1} and Theorem \ref{main result2} were proved
in \cite{Kuusi} and \cite[chapter 3]{DGV}. Our proof will focus on two cases $p>2$, $m>1$ and $0<m<1$, $m+p>3$.

\section{Parabolic Sobolev inequalities}\label{parabolic sobolev embedding theorem}
In this section, we collect some parabolic Sobolev inequalities which will be used in Section \ref{Expansion of Positivity}-\ref{Cold alternative}. Here and subsequently,
the function space $W_0^{1,p}(E)$ is defined as the set of functions $v\in W^{1,p}(E)$ whose trace on $\partial E$ is zero.
In the following, we present some of the results obtained in \cite[chapter I]{Di}.

\begin{proposition}\cite[Chapter I, Proposition 3.1]{Di}\label{parabolic sobolev} Let $p\geq1$ and $r\geq 1$. There exists a constant $\gamma$ depending only upon $N$, $p$ and $r$ such that for every
$v\in L^\infty(0,T;L^r(E))\cap L^p(0,T;W_0^{1,p}(E))$, there holds
\begin{equation}\begin{split}\label{Sobolev formula}\iint_{E_T}|v(x,t)|^qdz\leq &\gamma \iint_{E_T}|Dv(x,t)|^pdz
\left(\esssup_{0<t<T}\int_E|v(x,t)|^rdx\right)^{\frac{p}{N}}\end{split}\end{equation}
where $q=p\frac{N+r}{N}$.
\end{proposition}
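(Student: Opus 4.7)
The plan is to carry out the classical two-step argument: obtain a pointwise-in-$t$ Gagliardo--Nirenberg type inequality by combining the spatial Sobolev embedding with Hölder interpolation, then integrate in $t$ and absorb the $L^r_x$ factor into its essential supremum.

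First, for almost every $t\in(0,T)$, the function $v(\cdot,t)$ lies in $W_0^{1,p}(E)$, so the classical Sobolev embedding yields
$$\|v(\cdot,t)\|_{L^{p^{*}}(E)}\leq \gamma_1\|Dv(\cdot,t)\|_{L^p(E)},$$
with $p^{*}=\tfrac{Np}{N-p}$ when $p<N$. Next I would interpolate between $L^{p^{*}}$ and $L^r$: choosing $\theta\in(0,1)$ with
$$\frac{1}{q}=\frac{\theta}{p^{*}}+\frac{1-\theta}{r},$$
Hölder's inequality in the exponent gives
$$\|v(\cdot,t)\|_{L^q(E)}\leq \|v(\cdot,t)\|_{L^{p^{*}}(E)}^{\theta}\|v(\cdot,t)\|_{L^r(E)}^{1-\theta}.$$
The key choice is $\theta q=p$, i.e.\ $\theta=p/q$. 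A direct computation shows this is compatible with the interpolation identity precisely when $q=\tfrac{p(N+r)}{N}$, which is the exponent in \eqref{Sobolev formula}; one also checks $\theta=\tfrac{N}{N+r}\in(0,1)$, so interpolation is admissible.

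Raising to the $q$-th power and inserting the Sobolev bound for the $L^{p^{*}}$ factor, one obtains the pointwise estimate
$$\int_E|v(x,t)|^{q}\,\dd x\leq \gamma_2\Big(\int_E|Dv(x,t)|^p\,\dd x\Big)\Big(\int_E|v(x,t)|^r\,\dd x\Big)^{(q-p)/r},$$
and since $q-p=\tfrac{pr}{N}$, the second exponent equals $p/N$. Integrating this in $t\in(0,T)$ and pulling the factor $\big(\esssup_{0<t<T}\int_E|v|^r\,\dd x\big)^{p/N}$ outside the time integral gives \eqref{Sobolev formula}.

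The main obstacle, to the extent there is one, is accommodating the case $p\geq N$, where $p^{*}$ is not defined by the usual formula. There I would replace $p^{*}$ by any sufficiently large finite exponent (for instance $p^{*}=\max\{p(N+r)/N,\,r\}+1$) using the embedding $W_0^{1,p}(E)\hookrightarrow L^{q}(E)$ for every $q<\infty$ on the bounded set $E$; the interpolation exponent is then readjusted accordingly, and the rest of the argument is unchanged. The algebraic verification of the exponents and the admissibility range of $\theta$ is the only delicate bookkeeping; everything else is a direct application of Sobolev, Hölder, and Fubini.
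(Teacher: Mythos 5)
For $p<N$ your proof is correct and coincides with the paper's own argument: Proposition \ref{parabolic sobolev} is quoted from DiBenedetto's book, but the same Sobolev-then-H\"older chain appears essentially verbatim in the paper's proof of the closely related Proposition \ref{m<1Sobolev} (the case $p<N$). Your interpolation phrasing, $\frac{1}{q}=\frac{\theta}{p^*}+\frac{1-\theta}{r}$ with $\theta q=p$, is just a repackaging of the H\"older split $|v|^q=|v|^{p}\cdot|v|^{pr/N}$ with conjugate exponents $N/(N-p)$ and $N/p$.

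Your proposed fix for $p\ge N$, however, does not work, and this is a genuine gap. The three constraints you need — the interpolation identity $\frac{1}{q}=\frac{\theta}{\tilde p}+\frac{1-\theta}{r}$, the condition $\theta q=p$ so that time integration produces exactly $\iint_{E_T}|Dv|^p\,dz$, and the condition $(1-\theta)q=pr/N$ so that the $L^r$-factor carries the exponent $p/N$ — leave no remaining freedom in $\tilde p$. Multiplying the interpolation identity by $q$ and inserting $\theta q=p$ and $q-p=pr/N$ gives $1=\frac{p}{\tilde p}+\frac{p}{N}$, i.e.\ $\tilde p=\frac{Np}{N-p}$, which is precisely the Sobolev exponent and is negative or undefined when $p\ge N$. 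Replacing $p^*$ by a larger exponent therefore cannot be "readjusted accordingly": any other choice of $\tilde p$ breaks one of these relations, and if you relax $\theta q=p$ the time integral becomes $\int_0^T\|Dv(\cdot,t)\|_{L^p(E)}^{\theta q}\,dt$, which is not $\iint_{E_T}|Dv|^p\,dz$ and does not yield \eqref{Sobolev formula}. The case $p\ge N$ requires the full Gagliardo--Nirenberg interpolation inequality of Lemma \ref{gargliardo-nirenberg} with $s=r$ and $\alpha=N/(N+r)$ (one checks directly that $\alpha q=p$ and $(1-\alpha)q/r=p/N$), whose validity for $p\ge N$ rests on Morrey- and Friedrichs-type embeddings rather than on $W_0^{1,p}\hookrightarrow L^{Np/(N-p)}$; compare how the paper handles $p=N$ and $p>N$ in the proof of Proposition \ref{m<1Sobolev}.
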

\begin{lemma}\cite[Chapter I, Theorem 2.1]{Di}\label{gargliardo-nirenberg} Let $v\in W_0^{1,p}(E)$ with $p\geq1$. For $s\geq1$ there exists a constant $C$ depending only upon $N$, $p$ and $s$ such that
\begin{equation}\label{GNformula}\left(\int_{E}|v|^qdx\right)^{\frac{1}{q}}\leq C\left(\int_{E}|Dv|^pdx\right)^{\frac{\alpha}{p}}\left(\int_{E}|v|^sdx\right)^{\frac{1-\alpha}{s}}\end{equation}
where $\alpha\in[0,1]$, $p\geq1$, $q\geq1$ and
\begin{equation}\label{index}\alpha=\left(\frac{1}{s}-\frac{1}{q}\right)\left(\frac{1}{N}-\frac{1}{p}+\frac{1}{s}\right)^{-1}.\end{equation}
\end{lemma}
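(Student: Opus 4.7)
The plan is to derive \eqref{GNformula} by combining a Sobolev embedding with a H\"older interpolation. Consider first the subcritical case $1\leq p<N$, where the classical Sobolev embedding provides
\begin{equation*}
\|v\|_{L^{p^*}(E)}\leq C\|Dv\|_{L^p(E)},\qquad p^*=\tfrac{Np}{N-p},
\end{equation*}
for every $v\in W_0^{1,p}(E)$, with $C=C(N,p)$. For $s\leq q\leq p^*$ I apply H\"older's inequality to the decomposition $|v|^q=|v|^{\beta q}\,|v|^{(1-\beta)q}$ with conjugate exponents $p^*/(\beta q)$ and $s/((1-\beta)q)$; the conjugacy relation forces $\beta$ to satisfy $1/q=\beta/p^*+(1-\beta)/s$, which, as a direct computation shows, is exactly the value of $\alpha$ in \eqref{index}. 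This yields
\begin{equation*}
\|v\|_{L^q(E)}\leq \|v\|_{L^{p^*}(E)}^{\alpha}\,\|v\|_{L^s(E)}^{1-\alpha},
\end{equation*}
and substituting the Sobolev bound finishes the subcritical case.

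For the supercritical range $p>N$ I would use the Morrey embedding $\|v\|_{L^\infty(E)}\leq C\|Dv\|_{L^p(E)}$ on $W_0^{1,p}(E)$ together with the trivial pointwise interpolation
\begin{equation*}
\|v\|_{L^q(E)}\leq \|v\|_{L^\infty(E)}^{1-s/q}\,\|v\|_{L^s(E)}^{s/q},
\end{equation*}
noting that when $p>N$ the exponent $1-s/q$ agrees with \eqref{index} (formally $1/p^*\to 0$). The critical case $p=N$ is handled by the same subcritical argument using $W_0^{1,N}(E)\hookrightarrow L^r(E)$ for arbitrary finite $r$ in place of the Sobolev embedding, followed by the same H\"older interpolation between $L^s$ and $L^r$.

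The main obstacle, though essentially bookkeeping, is verifying in each regime that the H\"older interpolation parameter coincides with the scaling-dictated value in \eqref{index}, and that the admissibility $\alpha\in[0,1]$ corresponds precisely to $q$ lying in the appropriate interpolation range. A clean sanity check is to test \eqref{GNformula} under the rescaling $v_\lambda(x)=v(\lambda x)$ on $\RR^N$: matching the $\lambda$-powers on the two sides gives $-N/q=\alpha(1-N/p)-(1-\alpha)N/s$, which uniquely determines $\alpha$ and recovers exactly \eqref{index}, so the formula is forced by dimensional considerations alone. With this scaling identity in hand, the remaining content of the proof reduces to the two H\"older--Sobolev estimates described above.
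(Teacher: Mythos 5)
The paper itself does not prove this lemma; it is quoted verbatim from DiBenedetto's book, so I am evaluating your argument on its own merits. Your subcritical case $1\leq p<N$ is correct and is the standard Sobolev--H\"older derivation: the embedding $W_0^{1,p}\hookrightarrow L^{p^*}$ is scale-invariant, H\"older interpolation between $L^s$ and $L^{p^*}$ forces $1/q=\beta/p^*+(1-\beta)/s$, and since $1/p^*=1/p-1/N$ this $\beta$ is exactly the $\alpha$ of \eqref{index}.

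The treatment of $p\geq N$, however, has a genuine gap, and your own scaling sanity check is precisely what flags it. When $p>N$ the Morrey bound $\|v\|_{L^\infty(E)}\leq C\|Dv\|_{L^p(E)}$ is \emph{not} scale-invariant: its constant necessarily carries a factor $(\operatorname{diam}E)^{1-N/p}$, so any interpolation built on it cannot deliver a constant depending only on $N$, $p$, $s$. Independently, the exponent it produces, $1-s/q$, does \emph{not} coincide with the $\alpha$ of \eqref{index}: since $1/N-1/p>0$ for $p>N$, the denominator of $\alpha$ strictly exceeds $1/s$, so $\alpha<1-s/q$; the heuristic ``formally $1/p^*\to 0$'' is inconsistent with the formula, which has $1/N-1/p$, not $0$, where $-1/p^*$ would sit. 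The scale-invariant supercritical endpoint is the Morrey \emph{seminorm} $[v]_{C^{0,1-N/p}}\leq C\|Dv\|_p$, and interpolating a H\"older seminorm against $L^s$ down to $L^q$ is a genuinely different and more delicate step than pointwise H\"older. Similarly, for $p=N$ the embedding $W_0^{1,N}(E)\hookrightarrow L^r(E)$ is again not scale-invariant (its constant blows up as $r\to\infty$), and H\"older interpolation from any finite $r$ gives exponent $(1/s-1/q)/(1/s-1/r)$, which is strictly larger than $\alpha=(1/s-1/q)/(1/s)$ for every finite $r$, so the exponents never match. This is not a peripheral issue here: the paper applies Lemma~\ref{gargliardo-nirenberg} only in the proof of Proposition~\ref{m<1Sobolev}, at exactly $p=s=N$, $q=2N$, $\alpha=\tfrac12$, which falls in the regime where your argument does not close. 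The standard proofs of the Gagliardo--Nirenberg inequality for $p\geq N$ build on the scale-invariant $p=1$ case (Nirenberg's slicing/telescoping argument) and interpolate from there, rather than on the Morrey or critical Sobolev embeddings.
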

We remark that the estimate \eqref{GNformula} is also known as  Gagliardo-Nirenberg interpolation inequality and this will be used in the proof of the following result.
\begin{proposition}\label{m<1Sobolev} Let $p\geq1$ and $r>0$. Set
$$Q_1=K_1\times\Lambda_1=\left[-\tfrac{1}{2},\tfrac{1}{2}\right]^N\times[0,1].$$
Then there exists a constant $\gamma$ depending only upon $N$, $p$ and $r$ such that, for any $v\in L^\infty(0,1;L^{r}(K_1))\cap L^p(0,1;W_0^{1,p}(K_1))$,
the following holds:

In the case $p<N$, there holds
\begin{equation}\begin{split}\label{p<N r<1}\iint_{Q_1}|v(x,t)|^{p\frac{N+r}{N}}dz\leq &\gamma \iint_{Q_1}|Dv(x,t)|^pdz
\left(\esssup_{0<t<1}\int_{K_1}|v(x,t)|^rdx\right)^{\frac{p}{N}}.\end{split}\end{equation}
In the case $p>N$, the estimate
\begin{equation}\begin{split}\label{p>N r<1}\iint_{Q_1}|v(x,t)|^{p+\frac{r}{q}}dz\leq &\gamma \iint_{Q_1}|Dv(x,t)|^pdz
\left(\esssup_{0<t<1}\int_{K_1}|v(x,t)|^{r}dx\right)^{\frac{1}{q}}\end{split}\end{equation}
holds for any $q>1$.
In the case $p=N$, there holds
\begin{equation}\begin{split}\label{p=N r<1}\iint_{Q_1}|v(x,t)|^{p+\frac{r}{2}}dz\leq &\gamma \iint_{Q_1}|Dv(x,t)|^pdz
\left(\esssup_{0<t<1}\int_{K_1}|v(x,t)|^{r}dx\right)^{\frac{1}{2}}.\end{split}\end{equation}
\end{proposition}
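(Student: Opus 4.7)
The plan is to prove each case by a time-slicing argument: establish a pointwise-in-time estimate on $K_1$ of the form
$\int_{K_1} |v|^{\mathrm{target}}\,dx \le C\,\|Dv(\cdot,t)\|_p^p \cdot \bigl(\int_{K_1}|v|^r\,dx\bigr)^\sigma$
with the appropriate exponent $\sigma$, then integrate in $t\in[0,1]$ and pull the $L^r$-factor out as $\esssup_t \int|v|^r\,dx$. This reduces the proposition to a purely spatial Sobolev-type inequality for $W_0^{1,p}(K_1)$-functions which tolerates the possibly small exponent $r>0$.

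For the subcritical case $p<N$, I would interpolate by H\"older's inequality between $L^{p^*}$ and $L^r$, with $p^*=Np/(N-p)$. Writing $q=p(N+r)/N$ as $q=\alpha p^*+(1-\alpha)r$ forces $\alpha=(N-p)/N$ and $1-\alpha=p/N$, so
$\int|v|^q\,dx\le(\int|v|^{p^*})^{(N-p)/N}(\int|v|^r)^{p/N}$. The classical Sobolev inequality $\|v\|_{p^*}\le C\|Dv\|_p$ on $W_0^{1,p}(K_1)$ raises the first factor to the power $\alpha p^*=p$, and integration in $t$ yields \eqref{p<N r<1}. For the supercritical case $p>N$, Morrey's embedding gives $\|v\|_\infty\le C\|Dv\|_p$; splitting $|v|^{p+r/q}=|v|^p\cdot|v|^{r/q}$ and applying H\"older with conjugate exponents $q/(q-1)$ and $q$ gives
$$\int|v|^{p+r/q}\,dx\le\Bigl(\int|v|^{pq/(q-1)}\,dx\Bigr)^{(q-1)/q}\Bigl(\int|v|^r\,dx\Bigr)^{1/q},$$
and Morrey together with Poincar\'e controls $\int|v|^{pq/(q-1)}\,dx\le\|v\|_\infty^{p/(q-1)}\int|v|^p\,dx\le C(\int|Dv|^p\,dx)^{q/(q-1)}$; raising to the $(q-1)/q$-power and integrating in $t$ delivers \eqref{p>N r<1}. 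The borderline case $p=N$ uses $W_0^{1,p}(K_1)\hookrightarrow L^{2p}(K_1)$ (Sobolev embedding at the critical exponent, valid for every finite integrability exponent); Cauchy--Schwarz applied to $|v|^{p+r/2}=|v|^p\cdot|v|^{r/2}$ then gives \eqref{p=N r<1}.

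The main technical subtlety is that Lemma \ref{gargliardo-nirenberg} and Proposition \ref{parabolic sobolev} both require $s\ge 1$ (resp.\ $r\ge 1$), whereas Section \ref{Cold alternative} needs the full range $r>0$ covered by the present proposition. The H\"older-interpolation strategy sketched above sidesteps that restriction: the exponent $r$ enters only as an outer factor after the H\"older split, never as an intermediate $L^s$-norm constrained to $s\ge 1$. This is precisely what allows the three inequalities to remain uniform as $r\downarrow 0$, and in the supercritical/borderline cases it is the availability of genuine $L^\infty$-type control from Morrey/critical Sobolev that makes the outer factor $(\int|v|^r)^{1/q}$ (with $q$ free) admissible rather than forced to be $(\int|v|^r)^{p/N}$.
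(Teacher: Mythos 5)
Your argument is correct and follows essentially the same route as the paper: slicewise Sobolev/Morrey embedding for $W_0^{1,p}(K_1)$ combined with H\"older interpolation against the $L^r$-norm, then integration in time. The only cosmetic differences are in the $p>N$ case (the paper bounds $\int_{K_1}|v|^{p\bar q}\,dx$ directly by $\sup|v|^{p\bar q}$ using $|K_1|=1$, whereas you split off an $|v|^p$ factor and invoke Poincar\'e) and in the $p=N$ case (the paper reaches $\|v\|_{L^{2N}}\lesssim\|Dv\|_{L^N}$ via Gagliardo--Nirenberg plus Friedrichs, while you cite the critical Sobolev embedding directly), but the underlying estimates are identical.
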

\begin{proof} Our proof strategy is to use H\"older's inequality together with Sobolev's inequality.
In the case $p<N$, the proof is due to DiBenedetto \cite[page 8]{Di} and we include the proof here for the sake of completeness. Applying Sobolev embedding theorem slicewise to $v(\cdot,t)$, we obtain
\begin{equation*}\begin{split}\left(\int_{K_1}|v(\cdot,t)|^{\frac{pN}{N-p}}dx\right)^{\frac{N-p}{N}}
\leq C(N,p)\int_{K_1}|Dv(\cdot,t)|^pdx.\end{split}\end{equation*}
This yields
\begin{equation*}\begin{split}\iint_{Q_1}|v|^{p\frac{N+r}{N}}dz&\leq \int_0^1\left(\int_{K_1}|v(x,t)|^{\frac{pN}{N-p}}dx\right)^{\frac{N-p}{N}}
\left(\int_{K_1}|v(x,t)|^{r}dx\right)^{\frac{p}{N}}dt
\\&\leq C(N,p)\iint_{Q_1}|Dv|^pdz
\left(\esssup_{0<t<1}\int_{K_1}|v(x,t)|^rdx\right)^{\frac{p}{N}},\end{split}\end{equation*}
and \eqref{p<N r<1} is proved.
We now turn to the case $p>N$. In this case, the proof is due to Kuusi \cite{Kuusipersonal}. We use the Sobolev's inequality slicewise on $K_1$ and obtain
$$\sup_{x\in K_1}|v(\cdot,t)|\leq C(N,p)\left(\int_{K_1}|Dv(\cdot,t)|^pdx\right)^{\frac{1}{p}}.$$
Then, for any $\bar q>1$, there holds
\begin{equation*}\begin{split}\label{sobolev q>1}\left(\int_{K_1}|v(\cdot,t)|^{p\bar q}dx\right)^{\frac{1}{\bar q}}\leq \left(\sup_{x\in K_1}|v(x,t)|\right)^p
\leq C(N,p)\int_{K_1}|Dv(\cdot,t)|^pdx.\end{split}\end{equation*}
Choosing $\bar q=q/(q-1)$, we conclude from H\"older's inequality that
\begin{equation*}\begin{split}\iint_{Q_1}|v|^{p+\frac{r}{q}}dz&\leq \int_0^1\left(\int_{K_1}|v(x,t)|^{p\bar q}dx\right)^{\frac{1}{\bar q}}\left(\int_{K_1}|v(x,t)|^{r}dx\right)^{\frac{1}{q}}dt
\\&\leq C\iint_{Q_1}|Dv|^pdz\left(\esssup_{0<t<1}\int_{K_1}|v(\cdot,t)|^{r}dx\right)^{\frac{1}{q}},\end{split}\end{equation*}
and \eqref{p>N r<1} follows.
Finally, we come to the case $p=N$ which is the limiting case of Sobolev's inequality. We apply Lemma \ref{gargliardo-nirenberg} \eqref{GNformula} with $p=s=N$, $q=2N$ and $\alpha=\frac{1}{2}$ to obtain
\begin{equation}\label{GNp=N}\int_{\Omega}|v(\cdot,t)|^{2N}dx\leq C\int_{\Omega}|Dv(\cdot,t)|^{N}dx\int_{\Omega}|v(\cdot,t)|^Ndx.\end{equation}
Note that $v(\cdot,t)\in W_0^{1,p}(K_1)$ for almost every $t\in [0,1]$. Applying Friedrichs' inequality slicewise to $v(\cdot,t)$, we get
\begin{equation}\label{Friedrichs}\int_{\Omega}|v(\cdot,t)|^Ndx\leq C(N)\int_{\Omega}|Dv(\cdot,t)|^Ndx.\end{equation}
We plug \eqref{Friedrichs} back into \eqref{GNp=N}, there holds
\begin{equation*}\int_{\Omega}|v(\cdot,t)|^{2N}dx\leq C\left(\int_{\Omega}|Dv(\cdot,t)|^{N}dx\right)^2.\end{equation*}
By H\"older's inequality, we deduce
\begin{equation*}\begin{split}\iint_{Q_1}|v|^{N+\frac{r}{2}}dxdt&\leq \int_0^1\left(\int_{K_1}|v(x,t)|^{2N}dx\right)^{\frac{1}{2}}\left(\int_{K_1}|v(x,t)|^{r}dx\right)^{\frac{1}{2}}dt
\\&\leq C\iint_{Q_1}|Dv|^Ndz\left(\esssup_{0<t<1}\int_{K_1}|v(\cdot,t)|^{r}dx\right)^{\frac{1}{2}},\end{split}\end{equation*}
which completes the proof.
\end{proof}
\begin{remark}
\end{remark}
\begin{enumerate}
\item Compared with Proposition \ref{parabolic sobolev}, the condition for $r$ in Proposition \ref{m<1Sobolev} is relaxed and $0<r<1$ is admissible. For the applications,
we shall use Proposition \ref{parabolic sobolev} together with De Giorgi's estimates in Section \ref{Expansion of Positivity}-\ref{Hot alternative}. Proposition \ref{m<1Sobolev} will be used in the proof of Lemma \ref{moser integral} and Lemma \ref{uniform bound} in Section \ref{Cold alternative}.
\item After finishing this paper, we became aware of \cite{ACCN}, which proves Proposition \ref{m<1Sobolev} in a more general Sub-Riemannian homogeneous spaces. However, our approach make the argument simpler and easier.
\end{enumerate}

\section{Caccioppoli Estimates}\label{Caccioppoli estimates}

Throughout this section, $Q$ denotes the open subset of $E$. We shall use parabolic cylinders of the form $Q_T=Q\times[0,T]$.
Let $\partial_PQ_T=\partial Q\times[0,T]\cup Q\times\{0\}$ denote the parabolic boundary
of $Q_T$. For the fixed $t_1$, $t_2\in\mathbb{R}$ with $t_1<t_2$,
if the test function $\Phi(x,t)$ is required to vanish only on the lateral boundary $\partial Q\times[t_1,t_2]$, then the boundary terms
\begin{equation}\label{boundary t1}\int_Qu(x,t_1)\Phi(x,t_1)dx:=\lim_{\nu\to0}\frac{1}{\nu}\int_{t_1}^{t_1+\nu}\int_Qu(x,t)\Phi(x,t)dxdt\end{equation}and
\begin{equation}\label{boundary t2}\int_Qu(x,t_2)\Phi(x,t_2)dx:=\lim_{\nu\to0}\frac{1}{\nu}\int_{t_2-\nu}^{t_2}\int_Qu(x,t)\Phi(x,t)dxdt\end{equation}
have to be included. Next we shall derive an inequality involving the boundary terms which is an equivalent form of \eqref{weak form deff}.
Let $\nu<\frac{1}{4}(t_2-t_1)$ be a fixed positive number. We introduce the Lipschitz function $\theta_\nu(t)$ by\begin{equation}\label{theta time}
	\theta_\nu(t)=\begin{cases}
	0,&t<t_1,\\
	\frac{1}{\nu}(t-t_1),&t_1\leq t<t_1+\nu,\\ 1,&t_1+\nu\leq t<t_2-\nu,\\ \frac{1}{\nu}(t_2-t),&t_2-\nu\leq t<t_2,\\ 0,&t\geq t_2.
	\end{cases}
\end{equation}
Let $u$ be a weak supersolution to \eqref{DDP}-\eqref{mp} in an open set which compactly contains $Q_{t_1,t_2}=Q\times[t_1,t_2]$ and $\Phi$ be the smooth function mentioned above, we choose the test function $\varphi=\Phi(x,t)\theta_\nu(t)$ in \eqref{weak form deff} and pass to the limit $\nu\downarrow0$. Then we obtain from \eqref{boundary t1} and \eqref{boundary t2} that
\begin{equation}\begin{split}\label{weak form 1}
-\int_{t_1}^{t_2}\int_{Q}&u\frac{\partial \Phi }{\partial t}dxdt+ \int_{t_1}^{t_2}\int_{Q}A(x,t,u,Du)\cdot D\Phi dxdt
\\&+\int_Qu(x,t_2)\Phi(x,t_2)dx-\int_Qu(x,t_1)\Phi(x,t_1)dx\geq 0.\end{split}\end{equation}
We remark that \eqref{weak form 1} is an equivalent form of \eqref{weak form deff} and this will be used in the proof of Proposition \ref{averaged proposition}. Specifically, in the case when $\Phi$ is independent of $t$, we have
\begin{equation}\begin{split}\label{weak form 22}
\int_Qu(x,t_2)\Phi(x)dx\geq&\int_Qu(x,t_1)\Phi(x)dx-\iint_{Q_{t_1,t_2}}A(x,t,u,Du)\cdot D\Phi dz.\end{split}\end{equation}
This inequality is only needed in the proof of Proposition \ref{cold alt pro} in Section \ref{Cold alternative}.

In order to deal with the possible lack of differentiability in
time of weak supersolutions, the following time mollification of functions $v$ has
proved to be useful. We now define
$$v^*(x,t)=\frac{1}{\sigma}\int_0^te^{(s-t)/\sigma}v(x,s)ds.$$
Some elementary properties of this mollification are listed in the following lemma (see for example \cite[Lemma 2.9]{KL1}, \cite[Appendix B]{BDM}, \cite[page 36]{N}).
\begin{lemma}  \label{averaged function property}
\begin{enumerate} Let $q\geq1$.
\item If $v\in L^q(Q_T)$, then $$\|v^*\|_{L^q(Q_T)}\leq \|v\|_{L^q(Q_T)}$$
and
\begin{equation}\label{averaged function 1}\frac{\partial v^*}{\partial t}=\frac{1}{\sigma}(v-v^*)\in L^q(Q_T).\end{equation}
Moreover,  $v^*\to v$ in $L^q(Q_T)$ as $\sigma\downarrow0$.
\item If, in addition, $Dv\in L^q(Q_T)$, then $D(v^*)=(Dv)^*$ and $Dv^*\to Dv$ in $L^q(Q_T)$ as $\sigma\downarrow0$.
\item If $v\in C(\overline Q_T)$, then
$$v^*(x,t)+e^{-t/\sigma}v(x,0)\to v(x,t)$$
uniformly in $\overline Q_T$ as $\sigma\downarrow0$.
\end{enumerate}
\end{lemma}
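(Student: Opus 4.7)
My plan is to treat the three parts of the lemma separately, since each addresses a different aspect of the exponential time mollification $v^*(x,t)=\sigma^{-1}\int_0^t e^{(s-t)/\sigma}v(x,s)\,ds$. The unifying observation is that the kernel $K_\sigma(t,s)=\sigma^{-1}e^{(s-t)/\sigma}\chi_{\{0\leq s\leq t\}}$ satisfies $\int_0^t K_\sigma(t,s)\,ds=1-e^{-t/\sigma}\leq 1$ and, by Fubini, $\int_s^T K_\sigma(t,s)\,dt\leq 1$ as well, so $v^*$ is a sub-probability average of $v(x,\cdot)$ and enjoys a contraction property in each time variable. A direct differentiation under the integral sign on $v^*(x,t)=e^{-t/\sigma}\sigma^{-1}\int_0^t e^{s/\sigma}v(x,s)\,ds$ immediately yields the ODE $\partial_t v^*=\sigma^{-1}(v-v^*)$, which accounts for \eqref{averaged function 1}.

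For part (1), I would first prove the pointwise bound $|v^*(x,t)|^q\leq\int_0^t K_\sigma(t,s)|v(x,s)|^q\,ds$ by H\"older's inequality, absorbing the factor coming from $\int_0^t K_\sigma(t,s)\,ds\leq 1$, and then integrate in $(x,t)$ and use Fubini on the inner time integral to obtain $\|v^*\|_{L^q(Q_T)}\leq\|v\|_{L^q(Q_T)}$. Membership $\partial_tv^*\in L^q(Q_T)$ is then a consequence of the triangle inequality and the bound just established. For the convergence $v^*\to v$ in $L^q(Q_T)$, I would first verify it in a uniform form when $v\in C(\overline{Q_T})$ (essentially part (3) below, without the $e^{-t/\sigma}v(x,0)$ correction which is negligible in $L^q$), and then bootstrap to a general $v\in L^q(Q_T)$ via the density of $C(\overline{Q_T})$ in $L^q(Q_T)$ combined with the contraction bound applied to $v-w$ for a continuous approximant $w$. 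Part (2) is almost immediate: commuting the spatial derivative with the time integral against $K_\sigma$ gives $D(v^*)=(Dv)^*$, after which the $L^q$ convergence is just part (1) applied to $Dv$.

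For part (3), the algebraic identity $\sigma^{-1}\int_0^t e^{(s-t)/\sigma}\,ds=1-e^{-t/\sigma}$ allows one to rewrite
\begin{equation*}
v^*(x,t)+e^{-t/\sigma}v(x,0)-v(x,t)=\frac{1}{\sigma}\int_0^t e^{(s-t)/\sigma}(v(x,s)-v(x,t))\,ds+e^{-t/\sigma}(v(x,0)-v(x,t)).
\end{equation*}
Given $\epsilon>0$, uniform continuity of $v$ on $\overline{Q_T}$ provides $\delta>0$ with $|v(x,t)-v(x,s)|<\epsilon$ whenever $|t-s|<\delta$. Splitting the first integral at $s=t-\delta$ bounds the near part by $\epsilon$ (using the total mass of $K_\sigma$) and the far part by $2\|v\|_\infty e^{-\delta/\sigma}$, while the boundary correction $e^{-t/\sigma}(v(x,0)-v(x,t))$ is handled by separating $t<\delta$ (controlled by $\epsilon$) from $t\geq\delta$ (controlled by $2\|v\|_\infty e^{-\delta/\sigma}$). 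Sending $\sigma\downarrow 0$ and then $\epsilon\downarrow 0$ yields the claimed uniform convergence.

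The main technical point, I expect, is the uniform convergence in part (3): a naive $L^q$-style estimate does not control the boundary layer near $t=0$, which is precisely why the correction term $e^{-t/\sigma}v(x,0)$ must be added by hand, and the splitting at $s=t-\delta$ is the natural way to absorb both this boundary layer and the interior oscillation simultaneously. By contrast, the $L^q$ convergence in part (1) does not require any such correction, since the boundary layer contributes a set of measure $O(\sigma)$ that vanishes as $\sigma\downarrow 0$.
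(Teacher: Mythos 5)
Your proposal is correct and gives the standard proof. Note that the paper does not actually prove Lemma \ref{averaged function property}; it simply cites it as a known result from \cite{KL1}, \cite{BDM} and \cite{N}, and your argument (sub-probability kernel contraction via H\"older and Fubini for part (1), commuting $D$ with the time integral for part (2), and the split at $s=t-\delta$ with the explicit boundary correction for part (3)) is precisely the argument found in those references.
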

The following result is due to Kinnunen and Lindqvist \cite{KL2}-\cite{KL3}, which is crucial in the proof of Caccioppoli estimates. For completeness sake we include the proof here.
\begin{proposition}\label{averaged proposition}Let $u$ be a weak supersolution to \eqref{DDP}-\eqref{mp} in an open set which compactly contains $Q_T$. If $\sigma<T/4$, then
\begin{equation}\begin{split}\label{averaged equation}\iint_{Q_T}\left(\left[A(x,t,u,Du)\right]^*\cdot D\Phi+\Phi\frac{\partial u^*}{\partial t} \right)dz
\geq0\end{split}\end{equation}
valid for all non-negative function $\Phi\in L^p(0,T;W_0^{1,p}(Q))$.
\end{proposition}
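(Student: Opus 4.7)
The plan is to derive the desired inequality from the equivalent form \eqref{weak form 1} of the weak formulation by choosing as a test function a suitable \emph{adjoint} mollification of $\Phi$, and then converting everything via a Fubini-type duality identity. By standard density arguments (approximating a general non-negative $\Phi \in L^p(0,T;W_0^{1,p}(Q))$ by non-negative functions in $C_0^\infty(Q_T)$, and using Lemma \ref{averaged function property} to pass to the limit in the resulting integrals), it suffices to prove the inequality for $\Phi \in C_0^\infty(Q_T)$ with $\Phi \geq 0$.

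For such $\Phi$, define the dual mollification
$$\widetilde{\Phi}(x,s) := \frac{1}{\sigma}\int_s^T e^{(s-t)/\sigma}\,\Phi(x,t)\,dt.$$
Then $\widetilde{\Phi}$ is smooth, non-negative, has compact spatial support in $Q$, vanishes at $s=T$, and satisfies the ODE $\partial_s\widetilde{\Phi}=\sigma^{-1}(\widetilde{\Phi}-\Phi)$. A swap of the order of integration yields the duality
$$\iint_{Q_T} f^*\,g\,dz \;=\; \iint_{Q_T} f\,\widetilde{g}\,dz$$
for suitable $f,g$, from which one reads off $\iint A^*\!\cdot D\Phi\,dz = \iint A\cdot D\widetilde{\Phi}\,dz$ (since mollification commutes with spatial derivatives). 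Using in addition $\partial_t u^*=\sigma^{-1}(u-u^*)$, the same duality gives
$$\iint_{Q_T}\Phi\,\partial_t u^*\,dz = \frac{1}{\sigma}\iint_{Q_T}u(\Phi-\widetilde{\Phi})\,dz = -\iint_{Q_T} u\,\partial_s\widetilde{\Phi}\,dz.$$

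Now I apply \eqref{weak form 1} on $(t_1,t_2)=(0,T)$ with test function $\widetilde{\Phi}$ (which is smooth, non-negative, and vanishes on the lateral boundary since $\Phi$ does). The upper boundary term drops out because $\widetilde{\Phi}(\cdot,T)\equiv 0$, while the lower boundary term is non-positive because $u(\cdot,0)\geq 0$ and $\widetilde{\Phi}(\cdot,0)\geq 0$; hence
$$\iint_{Q_T} A\cdot D\widetilde{\Phi}\,dz - \iint_{Q_T} u\,\partial_s\widetilde{\Phi}\,dz \;\geq\; \int_Q u(x,0)\,\widetilde{\Phi}(x,0)\,dx \;\geq\; 0.$$
Substituting the two duality identities from the previous paragraph into this inequality produces exactly \eqref{averaged equation}.

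The main obstacles are technical rather than conceptual: rigorously justifying that $\widetilde{\Phi}$ qualifies as an admissible test function in \eqref{weak form 1} (smoothness in time and space, vanishing on the lateral boundary, and finiteness of all boundary traces so that the identity \eqref{boundary t1}--\eqref{boundary t2} applies at $t=0,T$), and carrying out the density step cleanly, making sure that the approximations $\Phi_j\to\Phi$ preserve non-negativity and that all four integrals appearing after duality converge. The restriction $\sigma<T/4$ simply ensures that the mollification is well-defined on a time interval long enough for the adjoint mollifier to act without any end-effects interfering with the computations.
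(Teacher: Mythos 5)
Your proof is correct and is, at bottom, the same computation as the paper's, but organized in a cleaner, ``dual'' way. The paper proves \eqref{averaged equation} by applying \eqref{weak form 1} to the translated test functions $\Phi(x,t+s)$ on $(0,T-s)$, multiplying by the kernel $\sigma^{-1}e^{-s/\sigma}$ and integrating over $s\in[0,T]$; after an integration by parts (using $u^*(\cdot,0)=0$) the right-hand side that remains is exactly $\int_Q u(x,0)\,\sigma^{-1}\!\int_0^T\Phi(x,s)e^{-s/\sigma}\,ds\,dx$, which is your $\int_Q u(x,0)\widetilde\Phi(x,0)\,dx$. You instead plug the adjoint mollifier $\widetilde\Phi$ into \eqref{weak form 1} once, and recover $u^*$ and $A^*$ via the Fubini duality $\iint f^*g = \iint f\widetilde g$. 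Since the paper's ``translate-and-integrate'' is precisely the unfolding of that Fubini step, the two derivations are equivalent; your version makes the duality explicit and avoids the moving endpoint $t_2=T-s$, which is arguably the more transparent presentation. Both proofs need the same nonnegativity facts at the endpoints: $\widetilde\Phi(\cdot,T)=0$, $\widetilde\Phi(\cdot,0)\ge0$, and $u(\cdot,0)\ge0$ a.e. (the latter from the paper's standing assumption $u>0$ a.e.). One small thing you should state more carefully in the density step: Lemma \ref{density} does not produce non-negative approximants, so when passing from smooth $\Phi$ to general $\Phi\in L^p(0,T;W_0^{1,p}(Q))$ one does not try to preserve non-negativity of the $\phi_j$; rather, one first establishes the inequality for all non-negative smooth $\Phi$ and then passes to the limit in the two integrals on the left side of \eqref{averaged equation} using Lemma \ref{averaged function property} and H\"older, exactly as the paper does. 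Since you already note this is the mechanism, the gap is cosmetic.
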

Before we can prove the Proposition \ref{averaged proposition}, we need an approximation lemma.
\begin{lemma}\label{density} Let $v\in L^p(0,T;W_0^{1,p}(Q))$. For any $\epsilon>0$, there exists a smooth function $\tilde v$ vanishing on the lateral boundary $\partial Q\times[0,T]$ such that
$$\|v-\tilde v\|_{L^p(Q_T)}+\|Dv-D\tilde v\|_{L^p(Q_T)}<\epsilon.$$\end{lemma}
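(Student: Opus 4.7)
The plan is a three-step density argument, combining simple-function approximation in the Bochner space with a standard spatial density lemma and a final time mollification. I would first approximate $v$ by a simple Bochner function, then replace each spatial factor by a $C_c^\infty(Q)$ function, and finally smooth in time by convolution.

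Concretely, given $\epsilon>0$, by density of simple functions in the Bochner space $L^p(0,T;W_0^{1,p}(Q))$ I can pick disjoint intervals $I_1,\dots,I_n\subset[0,T]$ and $w_i\in W_0^{1,p}(Q)$ so that
\[
v_1(x,t)=\sum_{i=1}^n \chi_{I_i}(t)\,w_i(x)
\]
satisfies $\|v-v_1\|_{L^p(Q_T)}+\|Dv-Dv_1\|_{L^p(Q_T)}<\epsilon/3$. Next, invoking the definition of $W_0^{1,p}(Q)$ as the closure of $C_c^\infty(Q)$ in the $W^{1,p}$-norm, I would choose $\phi_i\in C_c^\infty(Q)$ with $\|w_i-\phi_i\|_{W^{1,p}(Q)}$ small enough that
\[
v_2(x,t)=\sum_{i=1}^n \chi_{I_i}(t)\,\phi_i(x)
\]
approximates $v_1$ to within $\epsilon/3$ in the same norm. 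Crucially, $v_2$ has spatial support in the fixed compact set $K=\bigcup_i \supp\phi_i\Subset Q$. Finally, extending $v_2$ by zero outside $[0,T]$ in the time variable and convolving with a one-dimensional mollifier $\eta_\delta$ produces
\[
\tilde v(x,t)=\sum_{i=1}^n \phi_i(x)\,(\eta_\delta*\chi_{I_i})(t),
\]
which lies in $C^\infty(Q_T)$, still has spatial support in $K$ (hence vanishes on $\partial Q\times[0,T]$), and, by standard $L^p$-continuity of translation and mollification, approximates $v_2$ to within $\epsilon/3$ in the required norm once $\delta$ is sufficiently small. A triangle inequality then yields the claim.

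The only subtlety worth flagging is that the spatial supports of the $\phi_i$ must lie in a common compact subset of $Q$ before mollifying in time, so that the lateral boundary condition survives the time convolution; this is automatic because each $\phi_i$ has compact support in $Q$ and the sum is finite. Everything else reduces to routine applications of Bochner-space density and standard mollifier properties, so I do not anticipate any real technical obstacle.
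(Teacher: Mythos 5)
Your argument is correct and yields the lemma. The route differs slightly from the paper's: the paper invokes a ready-made Bochner-space density theorem (Proposition~6.29 in Hunter's lecture notes) which directly produces time factors $\theta_i\in C_0^\infty(0,T)$, so no further time smoothing is needed; you instead approximate by piecewise-constant-in-time functions $\sum_i\chi_{I_i}(t)w_i(x)$ and then mollify in $t$ by hand. Your version is more self-contained (it essentially reproves the cited density result in the special case needed), at the cost of an extra step. Two small points worth making explicit if you wrote this out in full: (i) simple Bochner functions a priori have measurable sets $E_i$ in place of intervals, so one should note that the $\chi_{E_i}$ can themselves be approximated in $L^p(0,T)$ by finite unions of intervals before passing to your $v_1$; (ii) both your proof and the paper's use density of $C_0^\infty(Q)$ in $W_0^{1,p}(Q)$, but the paper defines $W_0^{1,p}$ via zero trace rather than as the closure of $C_0^\infty$, so this step implicitly uses the equivalence of the two definitions, which holds for the (Lipschitz, indeed cubical) domains $Q$ appearing in the paper.
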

\begin{proof} We apply Proposition 6.29 of \cite[page 200]{hunter} with $X=W_0^{1,p}(Q)$. There exist $\theta_i\in C_0^\infty(0,T)$ and $w_i\in W_0^{1,p}(Q)$ with $i=1,2,\cdots,n$,
such that
$$\large\|\sum_{i=1}^nw_i\theta_i-v\large\|_{L^p(Q_T)}+\large\|\sum_{i=1}^n\theta_i Dw_i-Dv\large\|_{L^p(Q_T)}<\frac{\epsilon}{2}.$$
Let $\psi_i\in C_0^\infty(Q)$ be such that
$$\|w_i-\psi_i\|_{L^p(Q)}+\|Dw_i-D\psi_i\|_{L^p(Q)}<\frac{\epsilon}{2^i\max\{1,\|\theta_i\|_{L^p[0,T]}\}}$$
for any $i=1,2,\cdots,n$. If we choose
$\tilde v=\sum_{i=1}^n\psi_i\theta_i$, then the lemma follows by triangle inequalities.
\end{proof}

\begin{proof}[Proof of Proposition \ref{averaged proposition}] Let us assume initially that $\Phi$ is a non-negative smooth function vanishing on the lateral boundary $\partial Q\times[0,T]$. For a fixed $s\in\left(0,T\right)$, we apply \eqref{weak form 1} with $t_1=0$, $t_2=T-s$ and choose the test function $\Phi(x,t+s)$ to obtain
\begin{equation*}\begin{split}
\int_{s}^{T}\int_{Q}&-u(x,t-s)\frac{\partial \Phi }{\partial t}+ A\left(x,t-s,u(x,t-s),Du(x,t-s)\right)\cdot D \Phi dxdt
\\&+\int_Qu(x,T-s)\Phi(x,T)dx-\int_Qu(x,0)\Phi(x,s)dx\geq 0.\end{split}\end{equation*}
Multiplying both sides by $\sigma^{-1}e^{-s/\sigma}$ and integrating over $[0,T]$ with respect to $s$, we get
\begin{equation*}\begin{split}\int_{0}^{T}\int_{Q}&\left(\left[A(x,t,u,Du)\right]^*\cdot D\Phi-u^*\frac{\partial \Phi}{\partial t} \right)dxdt
+\int_Qu^*(x,T)\Phi(x,T)dx\\&\geq \int_Qu(x,0)\left(\frac{1}{\sigma}\int_0^{T}\Phi(x,s)e^{-s/\sigma}ds\right)dx.\end{split}\end{equation*}
Using integration by parts and noting that $u^*(x,0)=0$, we have
\begin{equation*}\begin{split}\int_{0}^{T}\int_{Q}&\left(\left[A(x,t,u,Du)\right]^*\cdot D\Phi+\Phi\frac{\partial u^*}{\partial t} \right)dxdt
\geq \int_Qu(x,0)\left(\frac{1}{\sigma}\int_0^{T}\Phi(x,s)e^{-s/\sigma}ds\right)dx.\end{split}\end{equation*}
It follows that
\eqref{averaged equation}
holds for any non-negative smooth functions $\Phi$ which vanishes on the lateral boundary $\partial Q\times[0,T]$. Finally, we turn our attention to the case when
$\Phi\in L^p(0,T;W_0^{1,p}(Q))$. According to Lemma \ref{density}, there exists a sequence of smooth functions $\{\phi_j\}_{j=1}^\infty$ which satisfy
\begin{equation*}\begin{split}\label{averaged proof 11}\int_{0}^{T}\int_{Q}&\left(\left[A(x,t,u,Du)\right]^*\cdot D\Phi_j+\Phi_j\frac{\partial u^*}{\partial t} \right)dxdt
\geq 0\end{split}\end{equation*} and
$\phi_j\to\phi$ in $L^p(0,T;W_0^{1,p}(Q))$ as $j\to \infty$. We use Lemma \ref{averaged function property} (1) with $q=p/(p-1)$ and H\"older's inequality to find that
\begin{equation*}\begin{split}\label{averaged proof}\int_{0}^{T}\int_{Q}\left[A(x,t,u,Du)\right]^*\cdot D\Phi_j dxdt\to \int_{0}^{T}\int_{Q}\left[A(x,t,u,Du)\right]^*\cdot D\Phi dxdt\end{split}\end{equation*}
as $j\to \infty$. As for the term involving time derivative, we use \eqref{averaged function 1} with $q=p$ to infer that $\partial u^*/\partial t\in L^p(Q_T)$.
Let $p^\prime=p/(p-1)$. Since $p>2$, we have $\phi_j\to\phi$ in $L^{p^\prime}(Q_T)$ as $j\to \infty$. By H\"older's inequality, we conclude that
\begin{equation*}\begin{split}\label{averaged proof 1}\int_{0}^{T}\int_{Q}\Phi_j\frac{\partial u^*}{\partial t}dxdt
\to \int_{0}^{T}\int_{Q}\Phi\frac{\partial u^*}{\partial t}dxdt\end{split}\end{equation*}
as $j\to \infty$, which proves the proposition.
\end{proof}
We are now in a position to study the Caccioppoli estimates. The Caccioppoi inequality stated in next proposition, was first announced by Fornaro and Sosio \cite[(4.7)]{FS}, deals with the case $m\geq1$ and $p>2$. However, there is no proof given for this estimate in \cite{FS}. We shall prove this result in the following.
\begin{proposition}\label{C1} Let $m> 1$ and $p>2$. Let $u$ be a positive, weak supersolution to \eqref{DDP}-\eqref{mp} in an open set which compactly contains $Q_T$. There exists a positive constant $\gamma$ depending only upon $m$, $p$, $C_0$ and $C_1$,
such that for every cylinder $Q_{t_1,t_2}=Q\times(t_1,t_2)\subset Q_T$, every $k>0$ and every piecewise smooth, nonnegative cutoff function $\zeta$ vanishing on $\partial Q\times[t_1,t_2]$,
\begin{equation}\begin{split}\label{CC}\tfrac{1}{2}\esssup_{t_1\leq \tau\leq t_2}
\int_{Q}&(u-k)_-^2(\cdot,\tau)\zeta^p(\cdot,\tau)dx+\iint_{Q_{t_1,t_2}}u^{m-1}|D(u-k)_-|^p\zeta^pdz\\ \leq &\tfrac{1}{2}\iint_{Q_{t_1,t_2}}(u-k)_-^2\left|\frac{\partial \zeta^p}{\partial t}\right|dz\\&+\tfrac{1}{2}\int_{Q}(u-k)_-^2(\cdot,t_1)\zeta^p(\cdot,t_1)dx\\&\quad+\gamma \iint_{Q_{t_1,t_2}}u^{m-1}(u-k)_-^p|D\zeta|^pdz.\end{split}\end{equation}
\end{proposition}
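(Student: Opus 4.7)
I would test the averaged weak inequality \eqref{averaged equation} of Proposition \ref{averaged proposition} with the admissible function $\Phi=(u^*-k)_-\zeta^p\theta_\nu(t)$, where $\theta_\nu$ is a Lipschitz cutoff of the form \eqref{theta time} approximating $\chi_{(t_1,\tau)}$ for a fixed $\tau\in(t_1,t_2)$. Thanks to the positivity hypothesis $|[u=0]|=0$ together with $\alpha=(m+p-2)/(p-1)\geq 1$ when $m\geq 1$, one has $u\in W^{1,p}_{\mathrm{loc}}$ via the chain rule $Du=\alpha^{-1}u^{1-\alpha}Du^\alpha$, so $u^*\in W^{1,p}_{\mathrm{loc}}$ with $Du^*=(Du)^*$ by Lemma \ref{averaged function property}(2); consequently $\Phi$ is non-negative and lies in $L^p(0,T;W^{1,p}_0(Q))$.

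For the parabolic term I would exploit the pointwise identity
\begin{equation*}
(u^*-k)_-\partial_t u^*=-\tfrac12\partial_t\bigl[(u^*-k)_-^2\bigr],
\end{equation*}
valid a.e.\ since $\partial_t u^*\in L^p(Q_T)$ by Lemma \ref{averaged function property}(1). Integrating by parts in $t$ against $\theta_\nu\zeta^p$ and sending $\nu\downarrow 0$, the bumps of $\theta_\nu'$ concentrated near $t_1$ and $\tau$ produce the boundary contributions
\begin{equation*}
\tfrac12\int_Q(u^*-k)_-^2(\cdot,\tau)\zeta^p(\cdot,\tau)\,dx-\tfrac12\int_Q(u^*-k)_-^2(\cdot,t_1)\zeta^p(\cdot,t_1)\,dx,
\end{equation*}
while what remains gives the volume term $-\tfrac12\iint_{Q\times(t_1,\tau)}(u^*-k)_-^2\partial_t\zeta^p\,dz$.

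For the diffusion term I would expand $D\Phi=D(u^*-k)_-\zeta^p+p(u^*-k)_-\zeta^{p-1}D\zeta$ and pass to the limit $\sigma\downarrow 0$: Lemma \ref{averaged function property}(1) yields $[A(\cdot,u,Du)]^*\to A(\cdot,u,Du)$ strongly in $L^{p/(p-1)}$, while the continuity of the truncation map on $W^{1,p}$ combined with $u^*\to u$ in $W^{1,p}_{\mathrm{loc}}$ (Lemma \ref{averaged function property}(2)) delivers $(u^*-k)_-\to (u-k)_-$ in $W^{1,p}_{\mathrm{loc}}$. After this passage, the coercivity $A\cdot Du\geq C_0 u^{m-1}|Du|^p$ restricted to $\{u<k\}$ (where $D(u-k)_-=-Du$) produces the good term $C_0\iint u^{m-1}|D(u-k)_-|^p\zeta^p\,dz$, while $|A|\leq C_1 u^{m-1}|Du|^{p-1}$ bounds the cross term by $pC_1\iint u^{m-1}|D(u-k)_-|^{p-1}(u-k)_-\zeta^{p-1}|D\zeta|\,dz$. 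Young's inequality with exponents $p/(p-1)$ and $p$, splitting the weight as $u^{m-1}=u^{(m-1)(p-1)/p}\cdot u^{(m-1)/p}$, then absorbs a fraction of the good term into the left-hand side and leaves the residual $\gamma\iint u^{m-1}(u-k)_-^p|D\zeta|^p\,dz$ on the right. Taking the essential supremum over $\tau\in(t_1,t_2]$ on the left and retaining the energy piece simultaneously yields \eqref{CC}.

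The hard part is the passage $\sigma\downarrow 0$ inside the nonlinear diffusion term: since $A$ depends nonlinearly on $Du$, weak convergence of $(Du)^*$ is inadequate and one genuinely needs the strong $L^p_{\mathrm{loc}}$ convergence $Du^*\to Du$. This is precisely where the positivity assumption $|[u=0]|=0$ is indispensable, allowing $u$ itself (and not only $u^\alpha$) to be regarded as a $W^{1,p}_{\mathrm{loc}}$ function so that Lemma \ref{averaged function property}(2) applies; the $u^{m-1}$ weight appearing in the limit integrand, with $m\geq 1$, is then handled by dominated convergence using the integrability built into \eqref{regularity}.
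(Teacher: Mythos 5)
There is a genuine gap, and it is the very first step. You assert that because $m\geq 1$ (so $\alpha=(m+p-2)/(p-1)\geq 1$) and $|[u=0]|=0$, the chain rule $Du=\alpha^{-1}u^{1-\alpha}Du^{\alpha}$ puts $u$ itself in $W^{1,p}_{\mathrm{loc}}$, so that $(u^*-k)_-$ (and later $(u-k)_-$) is an admissible Sobolev test function. This implication does not hold. Since $\alpha>1$ we have $1-\alpha<0$, so the factor $u^{1-\alpha}$ \emph{blows up} as $u\downarrow 0$; the hypothesis $|[u=0]|=0$ rules out a set of positive measure where $u$ vanishes but gives no quantitative lower bound on $u$, and so $u^{1-\alpha}Du^{\alpha}$ need not lie in $L^p_{\mathrm{loc}}$ even though $Du^{\alpha}$ does. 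Concretely, the regularity assumption \eqref{regularity} only provides $u^{\alpha}\in L^p_{\mathrm{loc}}(W^{1,p}_{\mathrm{loc}})$; composing with the map $v\mapsto v^{1/\alpha}$, which is H\"older but \emph{not} Lipschitz near $0$ when $\alpha>1$, does not transfer Sobolev regularity to $u$. Without $Du\in L^p_{\mathrm{loc}}$, you cannot apply Lemma~\ref{averaged function property}(2) to get $u^*\in W^{1,p}_{\mathrm{loc}}$, so your test function $\Phi=(u^*-k)_-\zeta^p\theta_\nu$ is not an admissible element of $L^p(0,T;W^{1,p}_0(Q))$ and the argument cannot even begin. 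A related symptom is that the "good'' energy $\iint u^{m-1}|D(u-k)_-|^p\zeta^p\,dz$ you want to absorb into the left-hand side is not a priori finite, so the Young-inequality absorption step is formal.

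The paper circumvents exactly this obstruction with a two-parameter truncation. It first replaces the non-Lipschitz map $v\mapsto v^{1/\alpha}$ by the Lipschitz dampening $G_\delta$ (linearized below $\delta$), so that $G_\delta(u^\alpha)\in W^{1,p}_{\mathrm{loc}}$ is a consequence of $u^\alpha\in W^{1,p}_{\mathrm{loc}}$ and the Lipschitz chain rule \cite[Theorem 2.1.11]{Z} alone. It then composes with the Lipschitz cut $F_\lambda$, which is constant for $s\leq\lambda$; the resulting $\tilde F(u^\alpha)=F_\lambda[G_{\lambda^\alpha/2}(u^\alpha)]$ has gradient $-\chi_{[\lambda<u<k]}Du$, supported on the set where $u\geq\lambda$, and there $u^{1-\alpha}\leq\lambda^{1-\alpha}$ is controlled. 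This yields the finiteness $S(\lambda)<\infty$ of the truncated energy, justifying the absorption, and only afterwards does the proof send $\lambda\downarrow 0$ via monotone convergence (for the lower-order terms and the energy) plus dominated convergence (for the residual $[u\leq\lambda]$ piece). Your cleaner identity $(u^*-k)_-\partial_t u^*=-\tfrac12\partial_t(u^*-k)_-^2$ is a nice shortcut for the parabolic term and is morally what the paper's $f_\lambda$ computation reduces to in the limit $\lambda\downarrow 0$, but the overall approach cannot be repaired without introducing the $\lambda$-truncation (or an equivalent device) that keeps all intermediate test functions and energies legitimately finite.
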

\begin{proof}
Since the supersolution multiplied by a cut-off function cannot be used as testing function in the formula \eqref{averaged equation}, we have to construct a suitable testing function. Let $\alpha=\frac{m+p-2}{p-1}$ and observe that $\alpha>1$. For a fixed $\delta>0$, we construct a function $G_\delta(v)$ by
\begin{equation*}
	G_\delta(v)=\begin{cases}
	v^{\frac{1}{\alpha}},&\mathrm{if}\quad v>\delta,\\ \delta^{\frac{1}{\alpha}}+\frac{1}{\alpha}\delta^{\frac{1}{\alpha}-1}(u-\delta),&\mathrm{if}\quad v\leq\delta.
	\end{cases}
\end{equation*}
It can be easily seen that $G_\delta(v)$ is a Lipschitz function with respect to $v$.
Moreover, for a fixed $\lambda$ with $0<\lambda<\tfrac{1}{10}k$, we introduce an auxiliary function
\begin{equation*}
	F_\lambda(s)=\begin{cases}
	0,&\mathrm{if}\quad s\geq k,\\
	k-s,&\mathrm{if}\quad \lambda<s< k,\\ k-\lambda,&\mathrm{if}\quad s\leq\lambda.
	\end{cases}
\end{equation*}
Alternatively,
$F_\lambda(s)=\left[(s-\lambda)_++\lambda-k\right]_-$ for all $s>0$.
It is easy to check that $F_\lambda(s)$ is a Lipschitz function as well. At this stage, we define
$$\tilde F(v)=F_\lambda\large[G_{\tfrac{\lambda^\alpha}{2}}(v)\large].$$
We conclude from \eqref{regularity} and \cite[Theorem 2.1.11]{Z} that $\tilde F(u^\alpha)(\cdot,t)\in W^{1,p}(Q)$ for almost every $t\in(0,T)$ and there holds
\begin{equation}\begin{split}\label{truncation}\frac{\partial}{\partial x_i} \tilde F(u^\alpha)
&=-\chi_{[\lambda<G_{\tfrac{1}{2}\lambda^\alpha}(u^\alpha)<k]}\frac{\partial}{\partial x_i}G_{\tfrac{\lambda^\alpha}{2}}(u^\alpha)
=-\chi_{[\lambda<u<k]}G_{\tfrac{\lambda^\alpha}{2}}^\prime(u^\alpha)\frac{\partial u^\alpha}{\partial x_i}\\&=-\alpha^{-1}\chi_{[\lambda<u<k]}u^{1-\alpha}\frac{\partial u^\alpha}{\partial x_i}\end{split}\end{equation}
where $i=1,\cdots,N $.
Hence $ \tilde F(u^\alpha)\in L^p(0,T;W^{1,p}(Q))$ and $D\tilde F(u^\alpha)=-\chi_{[\lambda<u<k]}Du$.

We are now in a position to construct the testing function.
Let $\tau\in (t_1,t_2)$ be a fixed instant and $Q_\tau=Q\times(t_1,\tau)$. Set $\theta_\nu=\theta_\nu(t)$ be as in \eqref{theta time} with $t_2$ replaced by $\tau$. In the inequality \eqref{averaged equation} we choose $\Phi=\tilde F(u^\alpha)\zeta^p\theta_\nu$ as a testing function. Since $\tilde F(u^\alpha)=F_\lambda(u)$, we conclude that
\begin{equation}\begin{split}\label{CC1}\iint_{Q_\tau}F_\lambda(u)\zeta^p\theta_\nu\frac{\partial u^*}{\partial t} dz
+\iint_{Q_\tau}&\left[A(x,t,u,Du)\right]^*\cdot D\left[F_\lambda(u)\zeta^p\theta_\nu\right]dz
\geq0.\end{split}\end{equation}
From \eqref{averaged function 1}, we have $\partial u^*/\partial t=(u-u^*)/\sigma$. We decompose the first integral as follows
\begin{equation*}\begin{split}\iint_{Q_\tau}&F_\lambda(u)\zeta^p\theta_\nu\frac{\partial u^*}{\partial t} dz
=\iint_{Q_\tau}F_\lambda(u^*)\zeta^p\theta_\nu\frac{\partial u^*}{\partial t} dz
+\iint_{Q_\tau}\zeta^p\theta_\nu\left[F_\lambda(u)-F_\lambda(u^*)\right]\frac{u-u^*}{\sigma} dz.\end{split}\end{equation*}
Since $F_\lambda(s)$ is decreasing with respect to $s$, then
\begin{equation*}\iint_{Q_\tau}\zeta^p\theta_\nu\left[F_\lambda(u)-F_\lambda(u^*)\right]\frac{u-u^*}{\sigma} dz\leq0,\end{equation*}
which implies that
\begin{equation}\begin{split}\label{CC2}\iint_{Q_\tau}&F_\lambda(u)\zeta^p\theta_\nu\frac{\partial u^*}{\partial t} dz
\leq \iint_{Q_\tau}F_\lambda(u^*)\zeta^p\theta_\nu\frac{\partial u^*}{\partial t} dz.\end{split}\end{equation}
The task is now to estimate the right hand side of \eqref{CC2}. We set $f_\lambda(s)=\int_{s}^kF_\lambda(r)dr$ and observe that
\begin{equation}\label{fs}
	f_\lambda(s)=\begin{cases}
	\frac{1}{2}(k-\lambda)^2+(\lambda-s)(k-\lambda),&0<s\leq\lambda,\\
	\frac{1}{2}(k-s)^2,&\lambda<s\leq k,\\ 0,&s> k.
	\end{cases}
\end{equation}
Moreover, we see that
$$F_\lambda(u^*)\frac{\partial u^*}{\partial t}=-\frac{\partial f_\lambda (u^*)}{\partial t}.$$
Using integration by parts and noting that $\theta_\nu(t_1)=\theta_\nu(\tau)=0$, we obtain
\begin{equation}\begin{split}\label{CC3}\iint_{Q_\tau}F_\lambda(u^*)\zeta^p\theta_\nu\frac{\partial u^*}{\partial t} dz
=\iint_{Q_\tau}f_\lambda(u^*)\zeta^p\frac{\partial \theta_\nu}{\partial t} dz+\iint_{Q_\tau}f_\lambda(u^*)\theta_\nu\frac{\partial \zeta^p}{\partial t} dz.\end{split}\end{equation}
By substituting \eqref{CC3} into \eqref{CC2}, we obtain from \eqref{CC1} that
\begin{equation}\begin{split}\label{CC4}
\iint_{Q_\tau}&\left[A(x,t,u,Du)\right]^*\cdot D\left[F_\lambda(u)\zeta^p\right]\theta_\nu dz\\&
\geq -\iint_{Q_\tau}f_\lambda(u^*)\zeta^p\frac{\partial \theta_\nu}{\partial t} dz-\iint_{Q_\tau}f_\lambda(u^*)\theta_\nu\frac{\partial \zeta^p}{\partial t} dz.\end{split}\end{equation}
Taking \eqref{fs} into consideration, we check at once that $f_\lambda(s)$ is a Lipschitz function with respect to $s$. Therefore we can pass to the limit $\sigma\downarrow 0$ in \eqref{CC4}.
We first let $\sigma\downarrow 0$ and then $\nu\downarrow0$ in \eqref{CC4}, there holds
\begin{equation}\begin{split}\label{CC5}\iint_{Q_\tau}&A(x,t,u,Du)\cdot D\left(F_\lambda(u)\zeta^p\right)dz
\\ &+\iint_{Q_\tau}f_\lambda(u)\frac{\partial \zeta^p}{\partial t}dz
+\int_{Q}f_\lambda(u)(\cdot,t_1)\zeta^p(\cdot,t_1)dx
\\&\geq \int_{Q}f_\lambda(u)(\cdot,\tau)\zeta^p(\cdot,\tau)dx.\end{split}\end{equation}
At this stage, we decompose the first term on the left hand side,
\begin{equation}\begin{split}\label{CC6}\iint_{Q_\tau}&A(x,t,u,Du)\cdot D\left(F_\lambda(u)\zeta^p\right)dz\\&=\iint_{Q_\tau}F_\lambda^\prime(u) \zeta^p A(x,t,u,Du)\cdot Du dz\\&\quad+\iint_{Q_\tau}p\zeta^{p-1}F_\lambda(u)A(x,t,u,Du)\cdot D\zeta dz\\&=:T_1(\lambda)+T_2(\lambda).\end{split}\end{equation}
To estimate $T_1(\lambda)$, we use \eqref{structure def} to obtain
\begin{equation}\label{CC7}T_1(\lambda)\leq -C_0\iint_{Q_\tau\cap[\lambda<u<k]}u^{m-1}|Du|^p\zeta^pdz=:-C_0S(\lambda),\end{equation}
with the obvious meaning of $S(\lambda)$. To proceed further, we check that $S(\lambda)<+\infty$ for any $\lambda>0$. This can be seen as follows
$$S(\lambda)= \frac{1}{\alpha^p}\iint_{Q_\tau\cap[\lambda<u<k]}u^{-\frac{m-1}{p-1}}|Du^\alpha|^p\zeta^pdz\leq \frac{\lambda^{-\frac{m-1}{p-1}}}{\alpha^p}\iint_{Q_\tau}|Du^\alpha|^p\zeta^pdz<+\infty,$$
since $Du=\alpha^{-1}u^{1-\alpha}Du^\alpha$, $m>1$ and $p>2$. Now we come to the estimate of $T_2(\lambda)$. Applying Young's inequality and \eqref{structure def}, we obtain
\begin{equation}\begin{split}\label{CC8}T_2(\lambda)&\leq C_1p\alpha^{1-p}k\iint_{Q_\tau\cap[u\leq \lambda]}|Du^\alpha|^{p-1}\zeta^{p-1}|D\zeta|dz\\&+\frac{C_0}{2}\iint_{Q_\tau\cap[\lambda<u<k]}u^{m-1}|Du|^p\zeta^pdz+\frac{2p^{p^\prime}C_1^{p^\prime}}{C_0(p^\prime)^2} \iint_{Q_\tau}u^{m-1}(u-k)_-^p|D\zeta|^pdz.\end{split}\end{equation}
where $p^\prime=p/(p-1)$.
Combining estimates \eqref{CC5}-\eqref{CC8} and taking into account that $S(\lambda)<+\infty$, we absorb $S(\lambda)$ into the left hand side and obtain the estimate
\begin{equation}\begin{split}\label{CC9}\iint_{Q_\tau\cap[u>\lambda]}&u^{m-1}|D(u-k)_-|^p\zeta^pdz+\int_{Q}f_\lambda(u)(\cdot,\tau)\zeta^p(\cdot,\tau)dx\\ \leq &\iint_{Q_\tau}f_\lambda(u)\left|\frac{\partial \zeta^p}{\partial t}\right|dz+\int_{Q}f_\lambda(u)(\cdot,t_1)\zeta^p(\cdot,t_1)dx\\&+\gamma k\iint_{Q_\tau\cap[u\leq \lambda]}|Du^\alpha|^{p-1}\zeta^{p-1}|D\zeta|dz+\gamma \iint_{Q_\tau}u^{m-1}(u-k)_-^p|D\zeta|^pdz,\end{split}\end{equation}
where $\gamma$ depends only on $m$, $p$, $C_0$ and $C_1$. Our task now is to pass to the limit $\lambda\downarrow 0$ in \eqref{CC9}.
We observe from \eqref{fs} that
$f_\lambda(s)$
is decreasing with respect to $\lambda$. By monotone convergence theorem, we deduce
\begin{equation*}\begin{split}&\int_{Q}f_\lambda(u)(\cdot,\tau)\zeta^p(\cdot,\tau)dx\to\frac{1}{2}\int_{Q}(u-k)_-^2(\cdot,\tau)\zeta^p(\cdot,\tau)dx,\\&
\int_{Q}f_\lambda(u)(\cdot,t_1)\zeta^p(\cdot,\tau)dx\to\frac{1}{2}\int_{Q}(u-k)_-^2(\cdot,t_1)\zeta^p(\cdot,\tau)dx,\quad\mathrm{and}
\\&\iint_{Q_\tau}f_\lambda(u)\left|\frac{\partial \zeta^p}{\partial t}\right|dz\to \frac{1}{2}\iint_{Q_\tau}(u-k)_-^2\left|\frac{\partial \zeta^p}{\partial t}\right|dz,
\end{split}\end{equation*}
as $\lambda\downarrow 0$.
Furthermore, we conclude from \eqref{regularity} and Lebesgue dominated theorem that
$$\iint_{Q_\tau\cap[u\leq \lambda]}|Du^\alpha|^{p-1}\zeta^{p-1}|D\zeta|dz\to0\quad\mathrm{as}\quad\lambda\downarrow 0,$$
hence that
$$\sup_{\lambda>0}\iint_{Q_\tau\cap[u>\lambda]}u^{m-1}|D(u-k)_-|^p\zeta^pdz<+\infty.$$
Finally, we use monotone convergence theorem again to the integral involving gradient, there holds
\begin{equation*}\begin{split}\iint_{Q_\tau\cap[u>\lambda]}u^{m-1}|D(u-k)_-|^p\zeta^pdz\to&\iint_{Q_\tau\cap [u>0]}u^{m-1}|D(u-k)_-|^p\zeta^pdz\quad\mathrm{as}\quad\lambda\downarrow 0.\end{split}\end{equation*}
Taking into account that $u>0$ almost everywhere in $Q_T$, we arrive at
\begin{equation*}\begin{split}\iint_{Q_\tau}u^{m-1}&|D(u-k)_-|^p\zeta^pdz+\frac{1}{2}\int_{Q}(u-k)_-^2(\cdot,\tau)\zeta^p(\cdot,\tau)dx\\ \leq &\frac{1}{2}\iint_{Q_\tau}(u-k)_-^2\left|\frac{\partial \zeta^p}{\partial t}\right|dz\\&+\frac{1}{2}\int_{Q}(u-k)_-^2(\cdot,t_1)\zeta^p(\cdot,\tau)dx\\&\quad+\gamma \iint_{Q_\tau}u^{m-1}(u-k)_-^p|D\zeta|^pdz,\end{split}\end{equation*}
and taking the supremum over $\tau\in (t_1,t_2)$ proves the Proposition.
\end{proof}
\begin{proposition}\label{C2}Let $m+p>3$ and $0<m<1$. Let $u$ be a positive, weak supersolution to \eqref{DDP}-\eqref{mp} in an open set which compactly contains $Q_T$. There exists a positive constant $\gamma$ depending only upon $m$, $p$, $C_0$ and $C_1$,
such that for every cylinder $Q_{t_1,t_2}=Q\times(t_1,t_2)\subset Q_T$, every $k>0$ and every piecewise smooth, nonnegative cutoff function $\zeta$ vanishing on $\partial Q\times[t_1,t_2]$,
\begin{equation}\begin{split}\label{CCC}\esssup_{t_1\leq \tau\leq t_2}\int_{Q}& (u-k)_-^2(\cdot,\tau)\zeta^p(\cdot,\tau)dx+k^{m-1}\iint_{Q_{t_1,t_2}}|D(u-k)_-|^p\zeta^pdz
\\ \leq&\gamma k\int_Q(u-k)_-(\cdot,t_1)\zeta^p(\cdot,t_1)dx\\&+\gamma k\iint_{Q_{t_1,t_2}}(u-k)_-\left|\frac{\partial \zeta}{\partial t}\right|dz\\&\quad+\gamma k^{m+p-1}\iint_{Q_{t_1,t_2}}\chi_{[u\leq k]}|D\zeta|^pdz.\end{split}\end{equation}
\end{proposition}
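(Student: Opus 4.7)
The plan is to follow the template of the proof of Proposition \ref{C1}: apply the mollified weak form \eqref{averaged equation} of Proposition \ref{averaged proposition} to a non-negative Lipschitz test function of the shape $\Phi = F_\lambda(u)\zeta^p\theta_\nu$, pass to the limits $\sigma\downarrow 0$, $\nu\downarrow 0$ and finally $\lambda\downarrow 0$, and take the supremum over $\tau\in(t_1,t_2)$. The new issue compared with Proposition \ref{C1} is that, in the fast-diffusion regime $0<m<1$, the coefficient $u^{m-1}$ appearing on the right-hand side of the structure condition \eqref{structure def} blows up as $u\to 0^+$. Consequently, the bare choice $F=(u-k)_-$ used in Proposition \ref{C1} leads, via Young's inequality applied to the diffusion cross-term, to a contribution $\iint(u-k)_-^p u^{m-1}|D\zeta|^p\,dz$ which need not be integrable near $\{u=0\}$ and in any case does not fit the right-hand side of \eqref{CCC}.

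Following the dampening-function idea of Lehtel\"a \cite{Le} (also used in \cite{IMM}), I would choose $F_\lambda$ to be a non-negative Lipschitz cutoff of $(u-k)_-$ with the key properties
\[
F_\lambda(u) = 0 \;\text{for } u\ge k, \qquad F_\lambda(u)\, u^{m-1} \le \gamma\, k^m \;\text{for all } u>0,
\]
uniformly in the dampening parameter $\lambda>0$; a typical construction is a smoothed version of $F_\lambda(u)=\min\{(u-k)_-,\,\gamma k^m(u\vee\lambda)^{1-m}\}$. Mimicking the computation of Proposition \ref{C1} verbatim, the diffusion contribution splits as in \eqref{CC6} into $T_1(\lambda)$ and $T_2(\lambda)$. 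The structure condition, together with the monotonicity $u\le k\Rightarrow u^{m-1}\ge k^{m-1}$ (valid because $m<1$), shows that in the limit $\lambda\downarrow 0$,
\[
T_1(\lambda) \le -C_0 k^{m-1}\iint|D(u-k)_-|^p\zeta^p\,dz,
\]
while the dampening bound $F_\lambda u^{m-1}\le \gamma k^m$ combined with weighted Young's inequality of conjugate exponent $p' = p/(p-1)$ yields
\[
|T_2(\lambda)| \le \tfrac12 C_0 k^{m-1}\iint|D(u-k)_-|^p\zeta^p\,dz + \gamma\, k^{m+p-1}\iint\chi_{[u\le k]}|D\zeta|^p\,dz.
\]
Half of $|T_2|$ is then absorbed into $T_1$. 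For the time contribution, integration by parts and the convergence $\Psi_\lambda(u):=\int_0^u F_\lambda(s)\,ds\to \tfrac12[k^2-(u-k)_-^2]$ (which follows from $|[u=0]|=0$ together with monotone/dominated convergence) produce, after cancellation of the $k^2/2$ parts between the $\tau$- and $t_1$-boundary terms and the $\partial_t\zeta^p$-integral, the $(u-k)_-^2$ terms on both sides of an intermediate estimate. The final form \eqref{CCC} follows by replacing $(u-k)_-^2$ by $k(u-k)_-$ on the right-hand side using the elementary inequality $(u-k)_-^2\le k(u-k)_-$ valid on $\{u\ge 0\}$, and by taking the supremum in $\tau$.

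The main obstacle is the construction and control of $F_\lambda$: one needs simultaneously (i) the pointwise bound $F_\lambda(u) u^{m-1} \le \gamma k^m$ (which is what makes the cross-term estimate integrable and forces the correct power $k^{m+p-1}$ on the right), (ii) $F_\lambda\to (u-k)_-$ on $\{u\ge \delta\}$ for every $\delta>0$, and (iii) sufficient control on the region $\{u<\lambda\}$ where the dampening deviates from $(u-k)_-$, so that the corresponding piece of $T_1$ and of the time term is $o(1)$ as $\lambda\downarrow 0$. All three are compatible, but balancing them is the delicate point; modulo this construction the argument reduces to bookkeeping parallel to the proof of Proposition \ref{C1}.
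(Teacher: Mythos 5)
Your proposal diverges from the paper's argument in a way that introduces a genuine gap, and as written it cannot be repaired: the three requirements you impose on the dampening function $F_\lambda$ are mutually incompatible.

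Recall what the proof scheme requires of $F_\lambda$. To carry out the Naumann--Kinnunen--Lindqvist mollification step (passing $\sigma\downarrow 0$) one needs the sign-definiteness
\[
\iint \zeta^p\theta_\nu\,[F_\lambda(u)-F_\lambda(u^*)]\,\frac{u-u^*}{\sigma}\,dz\le 0,
\]
which forces $F_\lambda$ to be non-increasing. Non-increasing monotonicity is also what gives $T_1(\lambda)$ the correct sign, since $T_1$ carries the factor $F_\lambda'(u)$. But a non-increasing, non-negative function with $F_\lambda(u)\,u^{m-1}\le \gamma k^m$, i.e.\ $F_\lambda(u)\le\gamma k^m u^{1-m}$, must satisfy $F_\lambda(0^+)=0$ (because $u^{1-m}\to 0$ when $0<m<1$), and a non-increasing non-negative function vanishing at $0^+$ vanishes identically. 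So there is no admissible $F_\lambda$ of the kind you postulate. Your concrete candidate $F_\lambda(u)=\min\{(u-k)_-,\,\gamma k^m(u\vee\lambda)^{1-m}\}$ is indeed \emph{not} monotone: it increases like $\gamma k^m u^{1-m}$ on an interval near $0$. On that interval $F_\lambda'>0$, so $T_1$ acquires a positive contribution $\iint F_\lambda'(u)\zeta^p A\cdot Du\ge C_0\iint F_\lambda'(u)u^{m-1}|Du|^p\zeta^p\ge 0$ that cannot be absorbed; and the mollification estimate fails as well. For the same reason your limit claim $\int_0^u F_\lambda\to\tfrac12[k^2-(u-k)_-^2]$ is false: the $\min$ with $\gamma k^m s^{1-m}$ does not disappear as $\lambda\downarrow 0$, so $\int_0^u F_\lambda$ converges to something strictly smaller than $ku-u^2/2$ on a $u$-region of size $O(k)$.

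The paper takes a different route, tailored to the definition of weak supersolution. Since $u^\alpha\in L^p_{\mathrm{loc}}(W^{1,p}_{\mathrm{loc}})$ with $\alpha=\tfrac{m+p-2}{p-1}\in(0,1)$, the natural test function is
\[
\Phi=(u^\alpha-k^\alpha)_-\,\zeta^p\,\theta_\nu ,
\]
which is automatically admissible (no Lipschitz cut-off in $u$ is needed), is non-increasing in $u$, and handles the cold-regime singularity \emph{structurally}: because $(\alpha+m-2)(p-1)=p(m-1)$, the Young's inequality in the cross term balances the powers of $u$ exactly, leaving only $k^{\alpha p}|D\zeta|^p\chi_{[u\le k]}$ on the right-hand side, and dividing by $\tfrac12\alpha k^{\alpha-1}$ produces precisely the powers $k$, $k^{m-1}$, $k^{m+p-1}$ of \eqref{CCC}. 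The time term is handled by the monotonicity trick applied to $(s^\alpha-k^\alpha)_-$, together with the two-sided bound on $\int_{u^*}^k(k^\alpha-s^\alpha)_+\,ds$ from Gianazza's notes, which directly yields the $k(u-k)_-$ and $(u-k)_-^2$ terms in \eqref{CCC}. The dampening-function idea of Lehtel\"a and Ivert--Marola--Masson \emph{is} used in the paper, but in Proposition~\ref{C3}: there the auxiliary function $H_\lambda(s)=\min\{\lambda^{-\delta}+\delta\lambda^{-1-\delta}(\lambda-s),\,s^{-\delta}\}$ with $\delta=-\epsilon/\alpha>0$ is genuinely non-increasing, so the monotonicity requirement is met. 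You should adopt the $(u^\alpha-k^\alpha)_-$ test function for Proposition~\ref{C2}.
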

\begin{proof} To start with, we fix a $\tau\in(t_1,t_2)$ and define $Q_\tau=Q\times(t_1,\tau)$.
Let $\theta_\nu(t)$ be as in \eqref{theta time} with $t_2=\tau$.
We observe from the assumption $m+p>3$ and $0<m<1$ that $\alpha<1$. This motivates us to choose the testing function \begin{equation}\label{CCC1}\Phi(x,t)=\left[u(x,t)^\alpha-k^\alpha\right]_-\zeta(x,t)^p\theta_\nu(t)\end{equation}
in the weak form \eqref{averaged equation}. From \eqref{regularity}, we check that $\phi\in L^p(0,T;W_0^{1,p}(Q))$. Therefore, $\phi$ is an admissible test function and this yields
\begin{equation}\begin{split}\label{CCC2}\iint_{Q_\tau}&\left[A(x,t,u,Du)\right]^*\cdot D\left[(u^\alpha-k^\alpha)_-\zeta^p\theta_\nu\right]dz
+\iint_{Q_\tau}(u^\alpha-k^\alpha)_-\zeta^p\theta_\nu\frac{\partial u^*}{\partial t} dz
\geq0.\end{split}\end{equation}
Concerning the second integral on the left hand side, we use \eqref{averaged function 1} to write
\begin{equation}\begin{split}\label{CCC3}\iint_{Q_\tau}&(u^\alpha-k^\alpha)_-\zeta^p\theta_\nu\frac{\partial u^*}{\partial t} dz
\\=&\iint_{Q_\tau}[(u^*)^\alpha-k^\alpha]_-\zeta^p\theta_\nu\frac{\partial u^*}{\partial t} dz
\\&+\iint_{Q_\tau}\zeta^p\theta_\nu\left[(u^\alpha-k^\alpha)_--((u^*)^\alpha-k^\alpha)_-\right]\frac{u-u^*}{\sigma} dz.\end{split}\end{equation}
Notice that the function $f(s)=(s^\alpha-k^\alpha)_-$ is decreasing with respect to $s$, we have
\begin{equation}\label{CCC4}\iint_{Q_\tau}\zeta^p\theta_\nu\left[(u^\alpha-k^\alpha)_--((u^*)^\alpha-k^\alpha)_-\right]\frac{u-u^*}{\sigma} dz\leq0.\end{equation}
Combining \eqref{CCC2}-\eqref{CCC4}, we arrive at
\begin{equation}\begin{split}\label{CCC5}\iint_{Q_\tau}[(u^*)^\alpha-k^\alpha]_-\zeta^p\theta_\nu\frac{\partial u^*}{\partial t} dz
+\iint_{Q_\tau}&\left[A(x,t,u,Du)\right]^*\cdot D\left[(u^\alpha-k^\alpha)_-\zeta^p\theta_\nu\right]dz\geq0.\end{split}\end{equation}
To proceed further, we use the method as employed in the proof of \cite[Proposition 2.1]{Gianazza} to deal with the first integral on the left hand side of \eqref{CCC5}.
We first observe that
$$\frac{\partial u^*}{\partial t}[(u^*)^\alpha-k^\alpha]_-=-\frac{\partial}{\partial t}\int_{u^*}^{k}(k^\alpha-s^\alpha)_+ds.$$
Using integration by parts, the above identity yields
\begin{equation*}\begin{split}\label{k4}\iint_{Q_\tau}&[(u^*)^\alpha-k^\alpha]_-\zeta^p\theta_\nu\frac{\partial u^*}{\partial t} dz
\\=&\iint_{Q_\tau}p\zeta^{p-1}\left(\int_{u^*}^{k}(k^\alpha-s^\alpha)_+ds\right)\frac{\partial \zeta}{\partial t}\theta_\nu dz
+\iint_{Q_\tau}\zeta^{p}\left(\int_{u^*}^{k}(k^\alpha-s^\alpha)_+ds\right)\frac{\partial \theta_\nu}{\partial t} dz.\end{split}\end{equation*}
Since $0<\alpha<1$, it follows  that
\begin{equation*}\int_{u^*}^{k}(k^\alpha-s^\alpha)_+ds\geq \frac{1}{2}\alpha k^{\alpha-1}(u^*-k)_-^2\quad\mathrm{and}\quad\int_{u^*}^{k}(k^\alpha-s^\alpha)_+ds\leq k^\alpha(u^*-k)_-,\end{equation*}
where the proof can be found at \cite[page 36]{Gianazza}.
Then we conclude from \eqref{theta time} that
\begin{equation}\begin{split}\label{CCC6}\iint_{Q_\tau}&[(u^*)^\alpha-k^\alpha]_-\zeta^p\theta_\nu\frac{\partial u^*}{\partial t} dz
\\& \leq pk^\alpha\iint_{Q_\tau}(u^*-k)_-\zeta^{p-1}\theta_\nu\left|\frac{\partial \zeta}{\partial t}\right|dz
\\ &\quad +\frac{k^{\alpha}}{\nu}\int_{t_1}^{t_1+\nu}\int_{Q}(u^*-k)_-\zeta^pdz
-\frac{\alpha k^{\alpha-1}}{2\nu}\int_{\tau-\nu}^{\tau}\int_{Q} (u^*-k)_-^2\zeta^pdz.\end{split}\end{equation}
At this stage, we combine the estimates \eqref{CCC5}-\eqref{CCC6} and pass to the limits $\sigma\downarrow 0$ and $\nu\downarrow0$. We arrive at
\begin{equation}\begin{split}\label{CCC7}\iint_{Q_\tau}&A(x,t,u,Du)\cdot D\left((u^\alpha-k^\alpha)_-\zeta^p\right)dz
\\ &+ pk^\alpha\iint_{Q_\tau}(u-k)_-\zeta^{p-1}\left|\frac{\partial \zeta}{\partial t}\right|dz
+k^{\alpha}\int_{Q}(u-k)_-(\cdot,t_1)\zeta^p(\cdot,t_1)dx
\\&\geq\frac{1}{2}\alpha k^{\alpha-1}\int_{Q} (u-k)_-^2(\cdot,\tau)\zeta^p(\cdot,\tau)dx.\end{split}\end{equation}
To estimate the first term on left hand side of \eqref{CCC7}, we use \eqref{structure def} and Young's inequality to obtain
\begin{equation}\begin{split}\label{CCC8}\iint_{Q_\tau}&A(x,t,u,Du)\cdot D\left((u^\alpha-k^\alpha)_-\zeta^p\right)dz
\\ \leq&-\iint_{Q_\tau}u^{\alpha+m-2}|Du|^p\zeta^p\chi_{[u< k]}dz+\gamma k^{\alpha p}\iint_{Q_\tau}|D\zeta|^p\chi_{[u\leq k]}dz\\ \leq&-k^{\alpha+m-2}\iint_{Q_\tau}|D(u-k)_-|^p\zeta^pdz+\gamma k^{\alpha p}\iint_{Q_\tau}|D\zeta|^p\chi_{[u\leq k]}dz,\end{split}\end{equation}
since $m+\alpha-2=p\frac{m-1}{p-1}<0$.
Inserting the estimate \eqref{CCC8} into \eqref{CCC7} and dividing by $\frac{1}{2}\alpha k^{\alpha-1}$ gives
\begin{equation*}\begin{split}k^{m-1}\iint_{Q_\tau}&|D(u-k)_-|^p\zeta^pdz+\int_Q(u-k)_-^2(\cdot,\tau)\zeta^p(\cdot,\tau)dx\\ \leq&\gamma k^{\alpha p-\alpha+1}\iint_{Q_\tau}|D\zeta|^p\chi_{[u<k]}dz\\&+\gamma k\iint_{Q_\tau}(u-k)_-\zeta^{p-1}\left|\frac{\partial \zeta}{\partial t}\right|dz\\&\quad+\gamma k\int_Q(u-k)_-(\cdot,t_1)\zeta^p(\cdot,t_1)dx.\end{split}\end{equation*}
Since  $\alpha p-\alpha+1=m+p-1$, the previous estimate yields \eqref{CCC}.
\end{proof}
Up to now we have studied Caccioppoli estimates involving the truncated functions $(u-k)_-$, which will be used in Section \ref{Expansion of Positivity}-\ref{Hot alternative}. However, in order to perform the Moser's iteration, we need Caccioppoli estimates of the following type. Our proof is in the spirit of \cite[Lemma 2.4]{Le}.
Since the weak supersolution multiplied by a cut-off function cannot be used as a testing function in the weak form \eqref{averaged equation}, we have to perform a refined analysis
to overcome this difficulty.
\begin{proposition}\label{C3}Let $p>2$, $m+p>3$ and $\epsilon\in(-1,0)$. Suppose that $u$ is a positive, weak supersolution to \eqref{DDP}-\eqref{mp} in an open set which compactly contains $Q_T$. There exists a positive constant $\gamma$ depending only upon $m$, $p$, $C_0$ and $C_1$,
such that for every cylinder $Q_{t_1,t_2}=Q\times(t_1,t_2)\subset Q_T$ and every non-negative function $\varphi\in C_0^\infty(Q_T)$,
\begin{equation}\begin{split}\label{moser1}\frac{1}{\epsilon(1+\epsilon)}&\int_{Q}u(\cdot,t_2)^{1+\epsilon}\varphi(\cdot,t_2)^pdx
-\frac{1}{\epsilon(1+\epsilon)}\int_{Q}u(\cdot,t_1)^{1+\epsilon}\varphi(\cdot,t_1)^pdx\\&+\iint_{Q_{t_1,t_2}}|Du|^pu^{m+\epsilon-2}\varphi^pdz
\\ \leq&\gamma \left(\frac{1}{|\epsilon|^p}\iint_{Q_{t_1,t_2}}u^{m+p+\epsilon-2}|D\varphi|^pdz+ \iint_{Q_{t_1,t_2}}u^{1+\epsilon}\frac{1}{\epsilon(1+\epsilon)}\frac{\partial \varphi^p}{\partial t}dz\right).\end{split}\end{equation}
If, in addition, $\varphi=0$ on $Q\times\{t_2\}$, then
\begin{equation}\begin{split}\label{moser2}\frac{1}{|\epsilon(1+\epsilon)|}&\esssup_{t_1\leq \tau\leq t_2}\int_{Q}u^{1+\epsilon}\varphi^pdx+\iint_{Q_{t_1,t_2}}|Du|^pu^{m+\epsilon-2}\varphi^pdz
\\&\leq\gamma \left(\frac{1}{|\epsilon|^p}\iint_{Q_{t_1,t_2}}u^{m+p+\epsilon-2}|D\varphi|^pdz+ \iint_{Q_{t_1,t_2}}u^{1+\epsilon}\left(\frac{1}{\epsilon(1+\epsilon)}\frac{\partial \varphi^p}{\partial t}\right)_+dz\right).\end{split}\end{equation}
\end{proposition}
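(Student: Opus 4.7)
My strategy is to insert a Lipschitz regularization of the formal test function $u^{\epsilon}\varphi^{p}$ into the mollified weak form \eqref{averaged equation}, exploit the structure condition together with Young's inequality on the divergence term, and remove the regularization by monotone and dominated convergence. Concretely, for $\delta>0$ let $H_\delta(s):=(\max\{s,\delta\})^{\epsilon}$, a bounded, Lipschitz, non-increasing approximation of $s\mapsto s^{\epsilon}$, and put $F_\delta(s):=\int_0^s H_\delta(r)\,\dd r$, noting $F_\delta(s)\uparrow s^{1+\epsilon}/(1+\epsilon)$ as $\delta\downarrow0$. With $\theta_\nu$ as in \eqref{theta time} for $(t_1,t_2)$, I would test \eqref{averaged equation} with $\Phi=H_\delta(u)\varphi^{p}\theta_\nu$. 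Exactly as in the derivation of \eqref{CCC4}, the monotonicity of $H_\delta$ gives
\begin{equation*}
\iint_{Q_{t_1,t_2}} H_\delta(u)\varphi^{p}\theta_\nu\,\partial_t u^{*}\,\dd z\leq \iint_{Q_{t_1,t_2}} H_\delta(u^{*})\varphi^{p}\theta_\nu\,\partial_t u^{*}\,\dd z,
\end{equation*}
and then $H_\delta(u^{*})\partial_t u^{*}=\partial_t F_\delta(u^{*})$ permits integration by parts in $t$. Sending $\sigma\downarrow0$ and $\nu\downarrow0$ (so that $\partial_t\theta_\nu$ yields the boundary averages \eqref{boundary t1}--\eqref{boundary t2}) leaves
\begin{equation*}
\int_Q F_\delta(u)(\cdot,t_1)\varphi^{p}(\cdot,t_1)\,\dd x-\int_Q F_\delta(u)(\cdot,t_2)\varphi^{p}(\cdot,t_2)\,\dd x\leq\iint_{Q_{t_1,t_2}}A\cdot D(H_\delta(u)\varphi^{p})\,\dd z-\iint_{Q_{t_1,t_2}}F_\delta(u)\,\partial_t\varphi^{p}\,\dd z.
\end{equation*}

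Next I decompose the spatial term by the product rule. On $\{u>\delta\}$ the chain-rule contribution $\epsilon u^{\epsilon-1}A\cdot Du\,\varphi^{p}$ is bounded above, via \eqref{structure def} and $\epsilon<0$, by $\epsilon C_0 u^{m+\epsilon-2}|Du|^{p}\varphi^{p}$; on $\{u\leq\delta\}$ this contribution vanishes. The product-rule contribution, using the pointwise inequality $\delta^{\epsilon}\leq u^{\epsilon}$ on $\{u\leq\delta\}$ (valid because $\epsilon<0$), is dominated throughout $Q_{t_1,t_2}$ by $pC_1 u^{m+\epsilon-1}|Du|^{p-1}\varphi^{p-1}|D\varphi|$. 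Young's inequality with weight $\eta\sim|\epsilon|^{1/p'}$ splits this cross term as
\begin{equation*}
\tfrac{1}{2}|\epsilon|C_0\,u^{m+\epsilon-2}|Du|^{p}\varphi^{p}+\gamma|\epsilon|^{1-p}\,u^{m+p+\epsilon-2}|D\varphi|^{p},
\end{equation*}
and the first summand is absorbed into the negative chain-rule contribution. Letting $\delta\downarrow0$, the positivity assumption $|[u=0]|=0$ gives $\chi_{[u>\delta]}\to1$ a.e.; monotone convergence handles the boundary terms (since $F_\delta(u)\uparrow u^{1+\epsilon}/(1+\epsilon)$ against $\varphi^{p}\geq 0$), and dominated convergence handles the $\partial_t\varphi^{p}$ integral (with the uniform dominant $F_\delta(u)\leq u^{1+\epsilon}/(1+\epsilon)$ on the compact set $\supp\varphi$). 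Dividing the outcome by $|\epsilon|$ turns the coefficient $|\epsilon|^{1-p}$ into $|\epsilon|^{-p}$ and the boundary and time-derivative prefactors into $[\epsilon(1+\epsilon)]^{-1}$, producing exactly \eqref{moser1}.

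For \eqref{moser2}, I would repeat the above derivation on $(t_1,\tau)$ for arbitrary $\tau\in(t_1,t_2)$; the assumption $\varphi(\cdot,t_2)=0$ allows the outer-boundary contribution to be discarded, and retaining only the positive part of $[\epsilon(1+\epsilon)]^{-1}\partial_t\varphi^{p}$ on the right-hand side before taking $\sup_\tau$ yields the esssup version. The main obstacle, handled in the spirit of \cite[Lemma~2.4]{Le}, is controlling the set $\{u\leq\delta\}$ uniformly in $\delta$: the chain-rule cancellation is unavailable there, and one must use the pointwise bound $\delta^{\epsilon}\leq u^{\epsilon}$ to recover the same upper estimate for the product-rule term as on $\{u>\delta\}$. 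A careful tuning of the Young weight, together with the final division by $|\epsilon|$, is what produces the sharp $|\epsilon|^{-p}$ constant on the right-hand side.
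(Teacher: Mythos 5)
Your proposal follows the same overall template as the paper (truncate the singular weight, test the mollified form, absorb the cross term by Young, pass to the limit), but the specific truncation you choose creates a genuine gap that the paper's truncation is specifically designed to avoid.

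You set $H_\delta(s)=(\max\{s,\delta\})^{\epsilon}$, a \emph{constant} extension below $\delta$, so $H_\delta'(u)=0$ on $\{u\le\delta\}$. Your Young step splits the product-rule term (everywhere on $Q_{t_1,t_2}$) as
\begin{equation*}
pC_1\,u^{m+\epsilon-1}|Du|^{p-1}\varphi^{p-1}|D\varphi|
\;\le\;\tfrac12|\epsilon|C_0\,u^{m+\epsilon-2}|Du|^{p}\varphi^{p}
\;+\;\gamma|\epsilon|^{1-p}u^{m+p+\epsilon-2}|D\varphi|^{p},
\end{equation*}
and you then absorb the first summand into $-|\epsilon|C_0\,u^{m+\epsilon-2}|Du|^{p}\varphi^{p}$ coming from the chain rule. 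But that negative chain-rule term lives only on $\{u>\delta\}$; on $\{u\le\delta\}$ the chain-rule contribution is identically zero, and the residual $\tfrac12|\epsilon|C_0\iint_{[u\le\delta]}u^{m+\epsilon-2}|Du|^{p}\varphi^{p}\,dz$ is left dangling on the right-hand side with nothing to cancel it. In terms of the regularity actually assumed in \eqref{regularity}, $u^{m+\epsilon-2}|Du|^{p}=\alpha^{-p}u^{\epsilon-\alpha}|Du^{\alpha}|^{p}$ with $\epsilon-\alpha<0$, so the integrand is \emph{not} majorized by a $\delta$-independent $L^{1}$ function; you cannot invoke dominated convergence, and \emph{a priori} you do not even know that $\iint u^{m+\epsilon-2}|Du|^{p}\varphi^{p}\,dz$ is finite — that finiteness is part of the conclusion. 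Declining to apply Young on $\{u\le\delta\}$ does not help either: the raw cross term there equals $p\,\delta^{\epsilon}\iint_{[u\le\delta]}\varphi^{p-1}A\cdot D\varphi$, and while the integral tends to zero, $\delta^{\epsilon}\to\infty$, so the product is indeterminate without a rate.

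The paper sidesteps this by working with $H_\lambda(u^{\alpha})$, where $H_\lambda$ is the \emph{linear} extension \eqref{CCCC1} of Ivert--Marola--Masson, so that $H_\lambda'$ is a negative \emph{constant} on the truncated region $\{u^{\alpha}\le\lambda\}$. The Young inequality \eqref{CCCC9} is then weighted by $|H_\lambda'(u^{\alpha})|$ itself, and the first Young summand $\tfrac{p-1}{p}C_0\,u^{m+\alpha-2}|H_\lambda'(u^{\alpha})|\,|Du|^{p}\varphi^{p}$ is absorbed into the chain-rule term $\alpha C_0\,u^{m+\alpha-2}|H_\lambda'(u^{\alpha})|\,|Du|^{p}\varphi^{p}$ \emph{uniformly on all of} $Q_{\tau_1,\tau_2}$, because the absorbing quantity $|H_\lambda'|>0$ everywhere. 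Moreover, the ratio $H_\lambda(u^{\alpha})^{p}/|H_\lambda'(u^{\alpha})|^{p-1}$ that appears in the remaining Young summand is shown to be increasing in $\lambda$ and thus dominated by its value at $\lambda=1$, which handles the passage $\lambda\downarrow0$. To repair your argument you would need to replace the constant extension by a truncation whose derivative does not vanish on the truncated set — precisely what the paper's dampening function $H_\lambda$ provides — and rerun the Young step with $|H_\lambda'|$ built into the weights; applied to $u$ directly rather than $u^{\alpha}$, one should also record explicitly why the composition belongs to $L^{p}(0,T;W^{1,p}(Q))$, since \eqref{regularity} only places $u^{\alpha}$, not $u$, in the Sobolev space.
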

\begin{proof}
Our first destination is to construct a suitable testing function. Let $\delta=-\frac{\epsilon}{\alpha}$. For $\lambda>0$, we invoke the auxiliary function
\begin{equation}\label{CCCC1}
	H_\lambda(s)=\begin{cases}
	\lambda^{-\delta}+\delta\lambda^{-1-\delta}(\lambda-s),&0\leq s\leq\lambda,\\
	s^{-\delta},&s>\lambda,
	\end{cases}
\end{equation}
which was first introduced by Ivert, Marola and Masson \cite{IMM}.
Furthermore, it is easy to verify that
\begin{equation}\label{CCCC2}
	H_\lambda^\prime(s)=\begin{cases}
	-\delta\lambda^{-1-\delta},&0\leq s\leq\lambda,\\
	-\delta s^{-1-\delta},&s>\lambda.
	\end{cases}
\end{equation}
Then we see that $H_\lambda(s)$ is a Lipschitz function. By \cite[Theorem 2.1.11]{Z}, we conclude that
$$D\left[H_\lambda(u^\alpha)\right]=H_\lambda^\prime(u^\alpha)D(u^\alpha)$$
and therefore $H_\lambda(u^\alpha)\in L^p(0,T;W^{1,p}(Q))$.
Let $\tau_1$, $\tau_2$ be the fixed time levels in the interval $(t_1,t_2)$ with $\tau_1<\tau_2$. Let $\theta_\nu(t)$ be as in \eqref{theta time} with $t_1$, $t_2$ replaced by $\tau_1$, $\tau_2$.
We are now in a position to introduce a testing function
\begin{equation*}\Phi(x,t)=H_\lambda(u^\alpha)\varphi(x,t)^p\theta_\nu(t)\end{equation*}
in the weak form \eqref{averaged equation}. Observe that $\phi\in L^p(0,T;W_0^{1,p}(Q))$ and there holds
 \begin{equation}\begin{split}\label{CCCC3}\iint_{Q_T}&H_\lambda(u^\alpha)\varphi^p\theta_\nu\frac{\partial u^*}{\partial t} dz+\iint_{Q_T}pH_\lambda(u^\alpha)\varphi^{p-1}\theta_\nu\left[A(x,t,u,Du)\right]^*\cdot D\varphi dz
 \\&\geq-\iint_{Q_T}H_\lambda^\prime(u^\alpha)\varphi^p\theta_\nu\left[A(x,t,u,Du)\right]^*\cdot Du^\alpha dz
.\end{split}\end{equation}
We now apply \eqref{averaged function 1} to decompose the first term on the left hand side of \eqref{CCCC3} as follows:
\begin{equation}\begin{split}\label{CCC4}&\iint_{Q_T}H_\lambda(u^\alpha)\varphi^p\theta_\nu\frac{\partial u^*}{\partial t} dz\\&=\iint_{Q_T}H_\lambda((u^*)^\alpha)\varphi^p\theta_\nu\frac{\partial u^*}{\partial t} dz+\iint_{Q_T}\left[H_\lambda(u^\alpha)-H_\lambda((u^*)^\alpha)\right]\varphi^p\theta_\nu\frac{u- u^*}{\sigma} dz
\end{split}\end{equation}
Since $H_\lambda(s^\alpha)$ is decreasing with respect to $s$, then
 \begin{equation*}\iint_{Q_T}\left[H_\lambda(u^\alpha)-H_\lambda((u^*)^\alpha)\right]\varphi^p\theta_\nu\frac{u- u^*}{\sigma} dz\leq 0.\end{equation*}
The estimate above together with \eqref{CCCC3} yield
\begin{equation}\begin{split}\label{CCCC5}\iint_{Q_T}H_\lambda(u^\alpha)\varphi^p\theta_\nu\frac{\partial u^*}{\partial t} dz\leq \iint_{Q_T}H_\lambda((u^*)^\alpha)\varphi^p\theta_\nu\frac{\partial u^*}{\partial t} dz.\end{split}\end{equation}
Set $h_\lambda(s)=\int_0^sH_\lambda(r^\alpha)dr$. Then we infer that
$$\frac{\partial h_\lambda(u^*)}{\partial t}=H_\lambda((u^*)^\alpha)\frac{\partial u^*}{\partial t}.$$
Using the above identity and integration by parts, we obtain
\begin{equation}\begin{split}\label{CCCC6}\iint_{Q_T}H_\lambda((u^*)^\alpha)\varphi^p\theta_\nu\frac{\partial u^*}{\partial t} dz
=-\iint_{Q_T}h_\lambda(u^*)\varphi^p\frac{\partial\theta_\nu}{\partial t} dz-\iint_{Q_T}h_\lambda(u^*)\theta_\nu\frac{\partial\varphi^p}{\partial t} dz.\end{split}\end{equation}
Notice that $h_\lambda(\cdot)$ is a Lipschitz function and this enable us to pass to the limit $\sigma\downarrow 0$ in \eqref{CCCC6}.
Combining \eqref{CCCC3}-\eqref{CCCC6}, we pass to the limits $\sigma\downarrow 0$ and $\nu\downarrow 0$. This implies that
 \begin{equation}\begin{split}\label{CCCC7}\int_{Q}h_\lambda(u)(\cdot,\tau)& \varphi(\cdot,\tau) ^pdx-\int_{Q}h_\lambda(u)(\cdot,\tau_1) \varphi(\cdot,\tau_1) ^pdx-\iint_{Q_{\tau_1,\tau_2}}h_\lambda(u)\frac{\partial\varphi^p}{\partial t} dz\\&\quad+\iint_{Q_{\tau_1,\tau_2}}pH_\lambda(u^\alpha)\varphi^{p-1} A(x,t,u,Du)\cdot D\varphi dz
 \\&\geq-\iint_{Q_{\tau_1,\tau_2}}H_\lambda^\prime(u^\alpha)\varphi^p A(x,t,u,Du)\cdot Du^\alpha dz,\end{split}\end{equation}
where $Q_{\tau_1,\tau_2}=Q\times(\tau_1,\tau_2)$.
At this stage,
we use \eqref{structure def} and recalling that $H_\lambda^\prime(u)\leq0$, there holds
\begin{equation}\begin{split}\label{CCCC8}\iint_{Q_{\tau_1,\tau_2}}&H_\lambda^\prime(u^\alpha)\varphi^p A(x,t,u,Du)\cdot Du^\alpha dz
\leq \alpha C_0\iint_{Q_{\tau_1,\tau_2}}H_\lambda^\prime(u^\alpha)\varphi^p u^{m+\alpha-2}|Du|^p dz.\end{split}\end{equation}
In order to estimate the fourth term on the left hand side, we use \eqref{structure def} and Young's inequality to obtain
\begin{equation}\begin{split}\label{CCCC9}&\left|pH_\lambda(u^\alpha)\varphi^{p-1} A(x,t,u,Du)\cdot D\varphi \right|
\\&\leq \frac{(C_1p)^p}{pC_0^{p-1}}\frac{H_\lambda(u^\alpha)^p}{|H_\lambda^\prime(u^\alpha)|^{p-1}}|D\varphi|^p
+\frac{p-1}{p}C_0u^{(m-1)\frac{p}{p-1}}|H_\lambda^\prime(u^\alpha)|\varphi^p|Du|^p.\end{split}\end{equation}
Inserting \eqref{CCCC8} and \eqref{CCCC9} into \eqref{CCCC7}, and noting that $m+\alpha-2=p\frac{m-1}{p-1}$, we arrive at
 \begin{equation}\begin{split}\label{CCCC10}-\int_{Q}&h_\lambda(u)(\cdot,\tau_2) \varphi(\cdot,\tau_2) ^pdx+\int_{Q}h_\lambda(u)(\cdot,\tau_1) \varphi(\cdot,\tau_1) ^pdx\\&+\iint_{Q_{\tau_1,\tau_2}}|H_\lambda^\prime(u^\alpha)|\varphi^p u^{m+\alpha-2}|Du|^p dz
 \\&\leq \gamma\iint_{Q_{\tau_1,\tau_2}}\left(\frac{H_\lambda(u^\alpha)^p}{|H_\lambda^\prime(u^\alpha)|^{p-1}}|D\varphi|^p+h_\lambda(u)\left(-\frac{\partial\varphi^p}{\partial t}\right)_+\right)dz,\end{split}\end{equation}
 where $\gamma$ depends only upon $m$, $p$, $C_0$ and $C_1$. The task now is to show the convergence of the integrals in \eqref{CCCC10} as $\lambda\downarrow 0$.
 We first observe that $H_\lambda(s)$, $|H_\lambda^\prime(s)|$ and $h_\lambda(s)$ are decreasing with respect to $\lambda$ (cf. \cite[Lemma 2.4]{Le}).
By monotone convergence theorem,
 \begin{equation}\label{CCCC11}\lim_{\lambda\to0}h_\lambda(u)=\lim_{\lambda\to0}\int_0^uH_\lambda(r^\alpha)dr=\int_0^u\lim_{\lambda\to0}H_\lambda(r^\alpha)dr=\frac{u^{1+\epsilon}}
 {1+\epsilon}.\end{equation}
Furthermore, we obtain from \eqref{CCCC11} that
\begin{equation}\begin{split}\label{CCCC12}&\int_{Q}h_\lambda(u)(\cdot,\tau_i) \varphi(\cdot,\tau_i) ^pdx\to\int_{Q}\frac{u(\cdot,\tau_i)^{1+\epsilon}}
 {1+\epsilon}\varphi(\cdot,\tau_i) ^pdx,\quad i=1,2,\quad\mathrm{and} \\ &\iint_{Q_{\tau_1,\tau_2}}h_\lambda(u)\left(-\frac{\partial\varphi^p}{\partial t}\right)_+dz\to\iint_{Q_{\tau_1,\tau_2}}\frac{u^{1+\epsilon}}
 {1+\epsilon}\left(-\frac{\partial\varphi^p}{\partial t}\right)_+dz
\end{split}\end{equation}
as $\lambda\downarrow0$, by the monotone convergence theorem.
To handle the first term on the right hand side of \eqref{CCCC11}, we calculate
\begin{equation*}
	T(\lambda):=\frac{H_\lambda(u^\alpha)^p}{|H_\lambda^\prime(u^\alpha)|^{p-1}}=\begin{cases}
	\delta^{1-p}\lambda^{-1-\delta}[\lambda+\delta(\lambda-u^\alpha)]^p,&\lambda\geq u^\alpha,\\
	\delta^{1-p}u^{m+p-2+\epsilon},&0<\lambda<u^\alpha.
	\end{cases}
\end{equation*}
Since $m+p>3$ and $-1<\epsilon<0$, then $0<\delta< p-1$. This implies that $T(\lambda)$ is increasing with respect to $\lambda$, which yields
$$\frac{H_\lambda(u^\alpha)^p}{|H_\lambda^\prime(u^\alpha)|^{p-1}}\leq \frac{H_1(u^\alpha)^p}{|H_1^\prime(u^\alpha)|^{p-1}}$$
for $0<\lambda<1$.
From \eqref{regularity}, we check that $H_1(u^\alpha)^p/|H_1^\prime(u^\alpha)|^{p-1}$ is an integrable function and therefore
\begin{equation}\begin{split}\label{CCCC13}\iint_{Q_{\tau_1,\tau_2}}\frac{H_\lambda(u^\alpha)^p}{|H_\lambda^\prime(u^\alpha)|^{p-1}}|D\varphi|^pdz\to
\delta^{1-p}\iint_{Q_{\tau_1,\tau_2}}u^{m+p-2+\epsilon}|D\varphi|^pdz,\end{split}\end{equation}
as $\lambda\downarrow0$, by Lebesgue dominated convergence theorem. At this stage, we conclude from \eqref{CCCC10}, \eqref{CCCC12} and \eqref{CCCC13} that
$$\sup_{\lambda>0}\iint_{Q_{\tau_1,\tau_2}}|H_\lambda^\prime(u^\alpha)|\varphi^p u^{m+\alpha-2}|Du|^p dz<+\infty.$$
We use the monotone convergence theorem again,
\begin{equation}\begin{split}\label{CCCC14}\iint_{Q_{\tau_1,\tau_2}}|H_\lambda^\prime(u^\alpha)|\varphi^p u^{m+\alpha-2}|Du|^p dz
 \to \delta\iint_{Q_{\tau_1,\tau_2}}\varphi^p u^{m+\epsilon-2}|Du|^p dz\end{split}\end{equation}
as $\lambda\downarrow 0$. Taking into account \eqref{CCCC11}-\eqref{CCCC14}, we pass to the limit $\lambda\downarrow 0$ in \eqref{CCCC10} and this implies that
 \begin{equation}\begin{split}\label{CCCC15}-\int_{Q}&\frac{1}{1+\epsilon}u^{1+\epsilon}(\cdot,\tau_2) \varphi(\cdot,\tau_2) ^pdx+\int_{Q}\frac{1}{1+\epsilon}u^{1+\epsilon}(\cdot,\tau_1) \varphi(\cdot,\tau_1) ^pdx\\&\quad +\delta\iint_{Q_{\tau_1,\tau_2}}\varphi^p u^{m+\epsilon-2}|Du|^p dz
 \\&\leq \gamma\iint_{Q_{\tau_1,\tau_2}}\left(\delta^{1-p}u^{m+p-2+\epsilon}|D\varphi|^p+\frac{1}{1+\epsilon}u^{1+\epsilon}\left(-\frac{\partial\varphi^p}{\partial t}\right)_+\right)dz,\end{split}\end{equation}
 which proves \eqref{moser1} by setting $\tau_1=t_1$ and $\tau_2=t_2$. Moreover, if $\varphi=0$ on $Q\times\{t_2\}$, then we choose $\tau_2=t_2$ and take the supremum over $\tau_1\in(t_1,t_2)$ in \eqref{CCCC15}. This proves estimate \eqref{moser2}. The proof of the proposition is now complete.
\end{proof}
\section{Expansion of Positivity}\label{Expansion of Positivity}
In this section, we will prove the expansion positivity property for positive, weak supersolutions of doubly degenerate parabolic equations. This property asserts that
some positive information of $u$ at some time level $s$, over the cube $K_\rho(y)$, propagates to further times $s+\theta\rho^2$, and a larger cube $K_{2\rho}(y)$. Here and
subsequently, $Q_\rho^{\pm}(\theta)$ stand for "forward" and "backward" parabolic cylinders of the form
\begin{equation*}Q_\rho^{+}(\theta)=K_\rho\times(-\theta\rho^p,0],\quad Q_\rho^{-}(\theta)=K_\rho\times(0,\theta\rho^p],\end{equation*}
where $\theta$ is a positive parameter.
To start with, we prove the following De-Giorgi type Lemma.
\begin{lemma}\label{Q>} Let $u$ be a positive, weak supersolution of \eqref{DDP}-\eqref{mp} in an open set which compactly contains $Q_{2\rho}^-(\theta)$ and
$a\in(0,1)$ be a fixed number. There exists a number $\nu_-$ depending only upon $m$, $p$, $C_0$, $C_1$ and $a$, such that if
\begin{equation}\label{Q>a}|[u\leq M]\cap Q_{2\rho}^-(\theta)|\leq\nu_-|Q_{2\rho}^-(\theta)|\end{equation}
then
\begin{equation}\label{Q>c}u\geq aM\quad\mathrm{a.\ e.}\quad\mathrm{in}\quad Q_{\rho}^-(\theta).\end{equation}
\end{lemma}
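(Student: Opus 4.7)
The plan is a standard De Giorgi iteration adapted to the doubly degenerate Caccioppoli inequalities of Propositions~\ref{C1} and \ref{C2}. Introduce the decreasing levels $k_n = aM + 2^{-n}(1-a)M$, the decreasing radii $\rho_n = \rho(1+2^{-n})$, the nested cylinders $Q_n = K_{\rho_n}\times(-\theta\rho_n^p,0]$, and piecewise smooth cutoffs $\zeta_n$ equal to one on $Q_{n+1}$, vanishing on the parabolic boundary of $Q_n$, with $|D\zeta_n|\lesssim 2^n/\rho$ and $|\partial_t\zeta_n^p|\lesssim 2^{pn}/(\theta\rho^p)$. Let $A_n = \{u<k_n\}\cap Q_n$ and set $Y_n = |A_n|/|Q_n|$; the goal is to show $Y_n\to 0$.

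First I would apply the Caccioppoli inequality appropriate to each range: Proposition~\ref{C1} when $m>1$ and Proposition~\ref{C2} when $0<m<1$, at level $k=k_n$ on the cylinder $Q_n$. On the support of $(u-k_n)_-$ one has $u<k_n\le M$ and $(u-k_n)_-\le M$, so every term on the right-hand side reduces to a constant depending only on $m,p,C_0,C_1$ times $2^{pn}M^\sigma\rho^{-p}(1+\theta^{-1})|A_n|$ for the appropriate exponent $\sigma$ (namely $\sigma=m+p-1$ in both cases once the $k$-factors in Proposition~\ref{C2} are collected). To process the weighted term $u^{m-1}|D(u-k_n)_-|^p$ appearing on the left-hand side in the range $m>1$, I would introduce the variable $W=u^{(m+p-1)/p}$, for which $u^{m-1}|Du|^p$ equals a constant multiple of $|DW|^p$, so that the gradient term becomes $\int\!\!\int|D(K_n-W)_+|^p\zeta_n^p\,dz$ with $K_n=k_n^{(m+p-1)/p}$.

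Next I would apply the parabolic Sobolev embedding of Proposition~\ref{parabolic sobolev} with $r=2$ to the function $v_n=(u-k_n)_-\zeta_n$ (or its $W$-analogue) extended by zero, combine with the Caccioppoli bound from the previous step, and invoke the measure-theoretic inequality
\begin{equation*}
\bigl(2^{-n-1}(1-a)M\bigr)^{q}\,|A_{n+1}|\;\le\;\iint_{Q_{n+1}}(u-k_n)_-^{q}\,dz,\qquad q=p\tfrac{N+2}{N},
\end{equation*}
which follows from $(u-k_n)_-\ge k_n-k_{n+1}$ on $A_{n+1}$. All factors of $M$, $\rho$, and $\theta$ then cancel thanks to the intrinsic definition of $Q_n$, leaving the recursive inequality
\begin{equation*}
Y_{n+1}\le \gamma\,b^{n}\,Y_n^{1+p/N},
\end{equation*}
with $\gamma,b$ depending only on $m,p,N,C_0,C_1,a$. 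A standard fast geometric convergence lemma (see, e.g., \cite[Chapter~I, Lemma~4.1]{Di}) then produces $\nu_- = \gamma^{-N/p}b^{-(N/p)^2}$ such that $Y_0\le \nu_-$ forces $Y_n\downarrow 0$, which is \eqref{Q>c}.

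The main obstacle is arranging the cancellation of $M$, $\rho$, and $\theta$ so that $\nu_-$ depends only on the structural data and $a$. This is delicate in the case $m>1$, where the weight $u^{m-1}$ in Proposition~\ref{C1} is not bounded below on $\{u<k_n\}$; the renormalization to $W=u^{(m+p-1)/p}$ is essential to convert the weighted gradient into an unweighted one to which Sobolev applies cleanly. In the range $0<m<1$ Proposition~\ref{C2} already supplies the factor $k^{m-1}$ explicitly, so the balance is more transparent, but one still must verify that the $k^{m+p-1}$ on the right-hand side, divided by the required $k^{m-1}$, interacts correctly with $(k_n-k_{n+1})^{q}$ to produce a constant recursion.
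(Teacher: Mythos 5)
Your plan is essentially the paper's own argument (De Giorgi iteration with shrinking levels $k_n$, shrinking cylinders $Q_n$, Caccioppoli from Propositions~\ref{C1}/\ref{C2}, parabolic Sobolev with $r=2$, fast geometric convergence), and your renormalization $W=u^{(m+p-1)/p}$ for the case $m>1$ is the correct device for converting $u^{m-1}|D(u-k)_-|^p$ into an unweighted gradient; the paper is silent on this case, referring to Fornaro--Sosio \cite[Lemma~3.3]{FS}, so your completion is a reasonable filling of that gap. Two small caveats are worth noting. First, in the $W$-formulation you must also relate $\esssup_t\int (K_n-W)_+^2\zeta^p\,dx$ to the $L^\infty_tL^2_x$ term $\esssup_t\int(u-k_n)_-^2\zeta^p\,dx$ that Proposition~\ref{C1} actually controls; this works because, by the mean value theorem, $(K_n-W)_+\leq cM^{(m-1)/p}(u-k_n)_-$ and the matching lower bound $K_n-K_{n+1}\geq c\,a^{(m-1)/p}M^{(m-1)/p}\,2^{-n}(1-a)M$ holds since $k_{n+1}\geq aM$, so the extra $M^{(m-1)/p}$ factors cancel in the recursion. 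Second, your assertion that ``all factors of $M$, $\rho$, and $\theta$ cancel'' is slightly optimistic: carrying the bookkeeping through (as the paper's display preceding \eqref{nu-m<1} shows) leaves the combination $\theta M^{m+p-3}$ in the recursion coefficient, so $\nu_-$ in fact retains a dependence on $\theta M^{m+p-3}$, exactly as in \eqref{nu-m<1}; the factors vanish only under the intrinsic normalization $\theta M^{m+p-3}\equiv 1$, which is how Lemma~\ref{Q>} is invoked inside Proposition~\ref{expansion in time}. This does not affect the structure of your argument, but the stated independence of $\nu_-$ from $\theta$ and $M$ should be read with that normalization in mind.
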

\begin{proof} The argument is standard and we just sketch the proof. For $n=0,1,2,\cdots$ set
$$\rho_n=\rho+2^{-n}\rho\quad\quad\mathrm{and}\quad\quad k_n=aM+(1-a)2^{-n}M.$$ We define $K_n=K_{\rho_n}$ and $Q_n=K_n\times(-\theta \rho_n^p,0]$. The cut-off function
$\zeta$ is chosen of the form $\zeta(x,t)=\phi(x)\theta(t)$ where
\begin{equation*}
	\phi(x)=\begin{cases}
	1,&\mathrm{in}\quad K_{n+1},\\
	0,&\mathrm{in}\quad \mathbb{R}^N\backslash K_{n},
	\end{cases}
\quad|D\phi(x)|\leq\frac{2^{n+1}}{\rho},\end{equation*}
\begin{equation*}
	\quad\quad\ \theta(t)=\begin{cases}
	0,&\mathrm{for}\quad  t<-\theta\rho_n^p,\\
	1,&\mathrm{for}\quad t\geq -\theta\rho_{n+1}^p,
	\end{cases}
\quad|\theta^\prime(t)|\leq\frac{2^{p(n+1)}}{\theta\rho^p}.\end{equation*}
To proceed further, we first consider the case $m+p>3$ and $0<m<1$.
We apply the Caccioppoli estimate \eqref{CCC} on the cylinder $Q_n$ for $(u-k_n)_-$ to obtain
\begin{equation}\begin{split}\label{DG1}\esssup_{-\theta\rho_n^p< t\leq 0}&\int_{K_n} (u-k_n)_-^2\zeta^pdx+k_n^{m-1}\int_{-\theta\rho_n^p}^0\int_{K_n}|D(u-k_n)_-|^p\zeta^pdz
\\ &\leq\gamma k_n\frac{2^{p(n+1)}}{\theta\rho^p}\iint_{Q_n}(u-k_n)_-dz+\gamma k_n^{m+p-1}\iint_{Q_n}\chi_{[u\leq k_n]}|D\phi|^pdz
\\ &\leq\gamma\frac{2^{np}M^{m+p-1}}{\rho^p}\left(1+\frac{1}{\theta M^{m+p-3}}\right)\left|Q_n\cap [u<k_n]\right|.\end{split}\end{equation}
At this stage, we use Proposition \ref{parabolic sobolev} together with \eqref{DG1} to infer that
\begin{equation*}\begin{split}\iint_{Q_n}&\left[(u-k_n)_-\zeta\right]^{p\frac{N+2}{N}}dxdt\leq
\gamma\left(1+\frac{1}{\theta M^{m+p-3}}\right)^{1+\frac{p}{N}}\frac{2^{np(1+\frac{p}{N})}M^{(m+p-1)\frac{p}{N}+p}}{\rho^{\frac{1}{N}p^2+p}}
\left|Q_n\cap [u<k_n]\right|^{1+\frac{p}{N}}.\end{split}\end{equation*}
Moreover, we estimate the left hand side from below
\begin{equation*}\begin{split}&\iint_{Q_n}\left[(u-k_n)_-\zeta\right]^{p\frac{N+2}{N}}dxdt\geq \left(\frac{1-a}{2^{n+1}}M\right)^{p\frac{N+2}{N}}\left|Q_{n+1}\cap [u<k_{n+1}]\right|.\end{split}\end{equation*}
Set
$$Y_n=\frac{\left|Q_n\cap [u<k_n]\right|}{|Q_n|}.$$
The previous estimates now yield
\begin{equation*}\begin{split}Y_{n+1}\leq \gamma\left(1+\frac{1}{\theta M^{m+p-3}}\right)^{1+\frac{p}{N}}\frac{\left(\theta M^{m+p-3}\right)^{\frac{p}{N}}}{(1-a)^{p\frac{N+2}{N}}}b^nY_n^{1+\frac{p}{N}},\end{split}\end{equation*}
where $b=2^{\frac{p^2}{N}+p+p\frac{N+2}{N}}$ and $\gamma$ depends only upon $N$, $m$, $p$, $C_0$ and $C_1$. From Lemma 4.1 of Chapter I in \cite{Di}, it follows that $Y_n\to0$ as $n\to\infty$, provided $Y_0\leq \nu_-$ where
\begin{equation}\label{nu-m<1}\nu_-=\gamma^{-\frac{N}{p}} b^{-\frac{N^2}{p^2}}(1-a)^{N+2}\frac{(\theta M^{m+p-3})^{\frac{N}{p}}}{(1+\theta M^{m+p-3})^{1+\frac{N}{p}}}.\end{equation}
Finally, using the same argument as in the proof of \cite[Lemma 3.3]{FS}, we can easily carry out the proof of this lemma in the case $p>2$ and $m>1$, with $\nu_-$ defined in \eqref{nu-m<1}. We omit the details.
\end{proof}
The next lemma deals with the expansion of positivity which translates a positive information to further times but within the same cube.
\begin{lemma}\label{K>} Let $u$ be a positive, weak supersolution of \eqref{DDP}-\eqref{mp} in an open set which compactly contains $E_T$. Suppose that for $(y,s)\in E_T$ and $\rho>0$ there holds
\begin{equation}\label{Ka}\left|[u(\cdot,s)\geq M]\cap K_\rho(y)\right|\geq \mu|K_\rho(y)|\end{equation}
for some $M>0$ and some $\mu\in(0,1)$. There exist $\kappa$ and $\delta$ in $(0,1)$, depending only upon the data $p$, $m$, $N$, $C_0$, $C_1$ and $\mu$, independent of $M$, such that
\begin{equation}\label{KK}\left|[u(\cdot,t)\geq \kappa M]\cap K_\rho(y)\right|\geq \frac{1}{2}\mu|K_\rho(y)|\quad\mathrm{for\ all}\quad t\in\left(s,s+\delta M^{3-m-p}\rho^p\right].\end{equation}
\end{lemma}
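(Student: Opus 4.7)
My plan is to follow a De Giorgi--DiBenedetto--Gianazza--Vespri measure-propagation argument. After translating to $(y,s)=(0,0)$, I work on $K_\rho\times(0,\theta]$ with $\theta:=\delta M^{3-m-p}\rho^p$ and a time-independent cut-off $\zeta(x)$ equal to $1$ on $K_{(1-\sigma)\rho}$, supported in $K_\rho$, with $|D\zeta|\leq 2/(\sigma\rho)$. The scheme is: apply an energy estimate to a suitable truncation of $u$; bound the initial contribution via the hypothesis \eqref{Ka}; then use a two-level Chebyshev inequality to recover the desired measure statement, tuning $\sigma$, $\delta$ and the auxiliary level in terms of $\mu$ at the end.

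For the case $m>1$, I would apply Proposition \ref{C1} to $(u-M)_-$. Since $(u(\cdot,0)-M)_-=0$ on $\{u(\cdot,0)\geq M\}$ (of measure $\geq\mu|K_\rho|$) and is pointwise $\leq M$ elsewhere, the initial contribution is $\leq M^2(1-\mu)|K_\rho|$; using $u^{m-1}\leq M^{m-1}$ on $\{u<M\}$ and $|D\zeta|^p\lesssim(\sigma\rho)^{-p}$, the gradient contribution is at most $C\sigma^{1-p}\delta M^2|K_\rho|$ by the scaling of $\theta$. A Chebyshev at the auxiliary level $(1-\eta)M$ then gives
\begin{equation*}
|[u(\cdot,t)<(1-\eta)M]\cap K_\rho|\leq\tfrac{1-\mu}{\eta^2}|K_\rho|+\tfrac{C\sigma^{1-p}\delta}{\eta^2}|K_\rho|+CN\sigma|K_\rho|
\end{equation*}
for all $t\in(0,\theta]$. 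Choosing $\eta^2=(1-\mu)/(1-\tfrac{3}{4}\mu)$ reduces the first term to $(1-\tfrac{3}{4}\mu)|K_\rho|$, leaving slack $\mu/4$ absorbed by $\sigma\lesssim\mu/N$ and then $\delta\lesssim\mu\sigma^{p-1}$. This yields the conclusion with $\kappa=1-\sqrt{(1-\mu)/(1-\tfrac{3}{4}\mu)}>0$ depending only on $\mu$.

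The main obstacle is the case $0<m<1$, where Proposition \ref{C2} carries only an implicit constant $\gamma\geq 1$ on the initial $L^1$-term; the resulting Chebyshev factor $\gamma/\eta^2$ always exceeds $1$, so the $m>1$ balancing fails. To get around this, I would derive an auxiliary logarithmic energy estimate by testing the mollified weak form \eqref{averaged equation} with a regularization of $\Psi(u)|\Psi'(u)|^{2-p}\zeta^p$, following the Lipschitz-truncation scheme of Propositions \ref{C2} and \ref{C3}, where
\begin{equation*}
\Psi(u)=\Bigl[\ln\tfrac{M}{(1+1/H)M-(u-M)_-}\Bigr]_+,\qquad H\gg 1.
\end{equation*}
This yields $\sup_t\int\Psi^2(u(\cdot,t))\zeta^p\,dx\leq\int\Psi^2(u(\cdot,0))\zeta^p\,dx+G$ with initial bound $\leq(\ln H)^2(1-\mu)|K_\rho|$ (since $\Psi(u(\cdot,0))=0$ on $\{u(\cdot,0)\geq M\}$) and a gradient integral $G$ that depends on $\ln H$ only linearly. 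Because $\Psi\geq\ln(H/2)$ on $\{u<M/H\}$, dividing by $\ln^2(H/2)$ produces a measure bound whose $(1-\mu)$-coefficient is $\ln^2 H/\ln^2(H/2)\to 1^+$ as $H\to\infty$. Taking $H$ so large (depending on $\mu$) that this coefficient is $\leq 1+\mu/(4(1-\mu))$, then $\sigma,\delta$ small enough to make the remaining terms $\leq\mu/4$, yields the conclusion with $\kappa=1/H$. The technically delicate part is the derivation of the log estimate itself: verifying admissibility of $\Psi|\Psi'|^{2-p}\zeta^p$ as a test function in \eqref{averaged equation} despite the singularity of $\Psi'$, and correctly handling the degenerate diffusion $u^{m-1}|Du|^p$, will require the full Naumann-mollification-plus-Lipschitz-truncation toolkit built in Section \ref{Caccioppoli estimates}.
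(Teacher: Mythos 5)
Your argument for the case $m>1$ follows exactly the paper's scheme: apply the energy estimate \eqref{CC} for $(u-M)_-$ on $K_\rho\times(0,\theta\rho^p]$ with a time-independent cut-off, bound the initial integral using \eqref{Ka}, and run a Chebyshev at a sublevel, tuning $\sigma,\delta,\kappa$ in terms of $\mu$ at the end. (Aside: your gradient term should scale like $\sigma^{-p}\delta$, not $\sigma^{1-p}\delta$, unless you exploit that $|D\zeta|$ is supported on an annulus of measure $\approx N\sigma|K_\rho|$; this does not affect the conclusion since $\delta$ absorbs it.)

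The issue you raise for $0<m<1$ is real, and the paper's written proof does not address it. After applying \eqref{CCC} with $k=M$, the coefficient in front of the initial $L^1$-term is $\gamma=2/\alpha>2$ (from dividing by $\tfrac12\alpha k^{\alpha-1}$ in the proof of Proposition \ref{C2}); the paper's displayed chain at estimate \eqref{K1} correctly carries $\gamma M^2(1-\mu)$, but the subsequent Chebyshev inequality silently drops the $\gamma$ on the $(1-\mu)$-term while keeping it on the gradient term. With the $\gamma$ retained, the coefficient $\gamma(1-\mu)/(1-\kappa)^2$ exceeds $1$ whenever $\mu$ is bounded away from $1$, and no choice of $\kappa\in(0,1)$ can repair this. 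So the paper's argument, as written, has the same gap you identified, and only closes as stated in the case $m>1$ (where \eqref{CC} gives a genuine factor $\tfrac12$ on both sides).

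Your proposed fix via a logarithmic energy estimate for $\Psi(u)$ is a legitimate route, and is what one finds in the DiBenedetto--Gianazza--Vespri framework for such measure-to-measure statements, but it is heavier machinery than is necessary here and would require establishing a new Caccioppoli-type inequality not proven in the paper (admissibility of $\Psi|\Psi'|^{2-p}\zeta^p$ in \eqref{averaged equation}, and a careful treatment of the $u^{m-1}|Du|^p$ degeneracy). A cleaner fix, closer to the infrastructure the paper already builds, is to \emph{not} convert the time-derivative term to the powers $(u-k)_-^2$ and $(u-k)_-$: stop at the natural potential
\[
g(s)=\int_s^{M}\bigl(M^\alpha-r^\alpha\bigr)_+\,dr,
\]
so that the energy inequality (a re-run of the proof of Proposition \ref{C2} that halts before the pointwise two-sided bound on $\int_{u^*}^k(k^\alpha-s^\alpha)_+ds$) reads
\[
\int_{K_{(1-\sigma)\rho}} g\bigl(u(\cdot,t)\bigr)\,dx\;\le\;\int_{K_\rho} g\bigl(u(\cdot,0)\bigr)\zeta^p\,dx\;+\;\text{(gradient error)}.
\]
Chebyshev at level $\kappa M$ then gives a coefficient $g(0)/g(\kappa M)$ in front of $(1-\mu)$, and since $g(0)/g(\kappa M)\to 1^+$ as $\kappa\downarrow 0$ (indeed $g(0)-g(\kappa M)\le \kappa M^{\alpha+1}$ while $g(0)=\tfrac{\alpha}{\alpha+1}M^{\alpha+1}$), one can choose $\kappa$ small in terms of $\mu$ and $\alpha$ so that this coefficient is $\le 1+\tfrac{\mu}{4(1-\mu)}$. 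This reproduces the paper's stated conclusion with essentially its parameter choices, avoids the logarithmic test function, and shows that the gap is a loss introduced by the conversion to $(u-k)_-$-powers rather than an intrinsic obstruction.
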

\begin{proof}
Without loss of generality, we may assume $(y,s)=(0,0)$. This is always possible by using a translation.  For $k>0$ and $t>0$ set $A_{k,\rho}(t)=[u(\cdot,t)<k]\cap K_\rho.$
The assumption \eqref{Ka} yields $|A_{M,\rho}(0)|\leq (1-\mu)|K_\rho|$.

We now apply Caccioppoli estimates for the truncated functions $(u-M)_-$
over cylinder $K_\rho\times(0,\theta \rho^p]$ where $\theta>0$ is to be determined. The cut-off function $\zeta$ is taken of the form $\zeta=\zeta(x)$ and such that $\zeta=1$
on $K_{(1-\sigma)\rho}$, and $|D\zeta|\leq (\sigma\rho)^{-1}$.
In the case $m+p>3$, $0<m<1$, we use Caccioppoli estimate \eqref{CCC} to obtain
\begin{equation}\begin{split}\label{K1}\esssup_{t\in(0,\theta\rho^p]}&\int_{K_{(1-\sigma)\rho}}(u-M)_-^2(\cdot,t)dx\\&\leq \gamma M\int_{K_{\rho}}(u-M)_-(\cdot,0)\zeta ^p(\cdot,0)dx+\gamma M^{m+p-1}\int_{0}^{\theta \rho^p}\int_{K_\rho}\chi_{[u<M]}|D\zeta|^pdxdt
\\&\leq \gamma M\int_{K_{\rho}}(u-M)_-(\cdot,0)dx+\gamma\frac{M^{m+p-1}}{(\sigma \rho)^p}\int_{0}^{\theta \rho^p}\int_{K_\rho}\chi_{[u<M]}dxdt\\ &\leq
\gamma M^2\left((1-\mu)+ \frac{\theta M^{m+p-3}}{\sigma ^p}\right)|K_\rho|.\end{split}\end{equation}
In the case $p>2$, $m>1$, we use Caccioppoli estimate \eqref{CC} to obtain
\begin{equation}\begin{split}\label{K2}\esssup_{t\in(0,\theta\rho^p]}&\int_{K_{(1-\sigma)\rho}}(u-M)_-^2(\cdot,t)dx\\&\leq \tfrac{1}{2}\int_{K_{\rho}}(u-M)_-^2(\cdot,0)\zeta ^p(\cdot,0)dx+\gamma \int_{0}^{\theta \rho^p}\int_{K_\rho}u^{m-1}(u-M)_-^p|D\zeta|^pdxdt
\\&\leq \tfrac{1}{2}\int_{K_{\rho}}(u-M)_-^2(\cdot,0)dx+\gamma\frac{M^{m+p-1}}{(\sigma \rho)^p}\int_{0}^{\theta \rho^p}\int_{K_\rho}\chi_{[u<M]}dxdt\\ &\leq
\gamma M^2\left((1-\mu)+ \frac{\theta M^{m+p-3}}{\sigma ^p}\right)|K_\rho|.\end{split}\end{equation}
Combining estimates \eqref{K1} and \eqref{K2}, we conclude that, in either case, the estimate
\begin{equation*}\begin{split}\int_{K_{(1-\sigma)\rho}}&(u-M)_-^2(\cdot,t)dx\leq \gamma M^2\left((1-\mu)+ \frac{\theta M^{m+p-3}}{\sigma ^p}\right)|K_\rho|\end{split}\end{equation*}
holds for all $t\in(0,\theta\rho^p]$. Furthermore, we estimate the left hand side from below
\begin{equation*}\begin{split}\int_{K_{(1-\sigma)\rho}}(u-M)_-^2(\cdot,t)dx&\geq \int_{K_{(1-\sigma)\rho}\cap [u<\kappa M]}(u-M)_-^2(\cdot,t)dx\geq M^2(1-\kappa)^2|A_{\kappa M,(1-\sigma)\rho}(t)|,\end{split}\end{equation*}
where $\kappa$ is to be determined later.
Recalling from the proof of Lemma 1.1 in \cite[chapter 2]{DGV} that $|A_{\kappa M,\rho}(t)|\leq |A_{\kappa M,(1-\sigma)\rho}(t)|+N\sigma|K_\rho|$, we obtain
\begin{equation*}\begin{split}|A_{\kappa M,\rho}(t)|\leq&\frac{1}{M^2(1-\kappa)^2}\int_{K_{(1-\sigma)\rho}}(u-M)_-^2(\cdot,t)dx+N\sigma|K_\rho|
\\ \leq&\frac{1}{(1-\kappa)^2}\left(1-\mu+\frac{\gamma}{\sigma^p}\theta M^{m+p-3}+N\sigma\right)|K_\rho|.\end{split}\end{equation*}
We have established an estimate for the measure of level set, and now we choose
\begin{equation}\begin{split}\label{parameter1}\theta=\delta M^{3-p-m},\quad\sigma=\frac{\mu}{64N},\quad \delta=\frac{\mu^{p+1}}{4^{3p+4}\gamma N}\quad\mathrm{and} \quad\kappa=\frac{1}{64}\mu.\end{split}\end{equation}
Then for such choices the lemma follows.
\end{proof}
With the help of the preceding two lemmas, we can now prove the main result of this section. Our proof is based on the idea from \cite[chapter 2]{DGV}.
\begin{proposition}\label{expansion in time} Let $u$ be a positive, weak supersolution of \eqref{DDP}-\eqref{mp} in an open set which compactly contains $E_T$. Suppose that for some $(y,s)\in E_T$ and some $\rho>0$ there holds
\begin{equation}\label{epansion assumption}\left|[u(\cdot,s)>M]\cap K_\rho(y)\right|\geq \mu |K_\rho(y)|\end{equation} for some $M>0$
and $\mu\in(0,1)$. There exist constants $\eta$, $\delta$ in $(0,1)$ and $\gamma$, $C>1$ depending only upon $p$, $m$, $N$, $C_0$, $C_1$ and $\mu$, such that
\begin{equation}\label{time expansion formula}u(\cdot,t)\geq \eta M\quad\mathrm{a.\ e.}\quad\mathrm{in}\quad K_{2\rho}(y)\times\left(s+\frac{C^{p+m-3}}{(\eta M)^{p+m-3}}\frac{1}{2}\delta \rho^p,s+\frac{C^{p+m-3}}{(\eta M)^{p+m-3}}\delta \rho^p\right].\end{equation}
Moreover, the functional dependence of $\eta$ on the parameter $\mu$ is of the form
\begin{equation}\label{parameter dependence 2}\eta=\eta_0\mu B^{-\mu^{-d}}\end{equation}
for constants $\eta_0$, $B$, $d$ depending only upon the data $p$, $m$, $C_0$ and $C_1$.
\end{proposition}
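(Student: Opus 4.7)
My plan follows the DGV-style expansion-of-positivity scheme built out of the in-cube forward propagation (Lemma~\ref{K>}) and the De~Giorgi pointwise-from-measure lemma (Lemma~\ref{Q>}), with a logarithmic measure-shrinking estimate bridging the two. By translation we may assume $(y,s)=(0,0)$.

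\emph{Phase I (forward propagation in the original cube).} Apply Lemma~\ref{K>} to the hypothesis \eqref{epansion assumption}. This produces $\kappa,\delta\in(0,1)$ depending on $\mu$ and the data such that
\begin{equation*}
\bigl|[u(\cdot,t)\geq \kappa M]\cap K_\rho\bigr|\geq \tfrac{1}{2}\mu\,|K_\rho|\quad\text{for all } t\in(0,\delta M^{3-m-p}\rho^p].
\end{equation*}
Since $K_\rho\subset K_{4\rho}$, the same inequality holds on $K_{4\rho}$ at the cost of a factor $4^{-N}$, giving a measure-theoretic positivity set of density $\mu_1:=4^{-N-1}\mu$ uniformly in time throughout this intrinsic interval.

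\emph{Phase II (logarithmic shrinking of the bad set).} The central analytic step is to turn uniform-in-time measure positivity into a \emph{small} measure of the bad set after passing to a smaller level. Using the Caccioppoli-type inequality on $K_{4\rho}\times I$ (with $I$ a sub-interval of the Phase~I interval) applied to a logarithmic truncation of the form $\ln_+\bigl(\kappa M/(u+\varepsilon)\bigr)$, one obtains, in the spirit of \cite[Ch.~2]{DGV}, an estimate
\begin{equation*}
\bigl|[u\leq 2^{-j}\kappa M]\cap K_{2\rho}\times I\bigr|\leq \frac{C(\mu,\text{data})}{j}\,|K_{2\rho}\times I|
\end{equation*}
for every $j\in\mathbb{N}$. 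Pick $j_\ast$ so that the right-hand side falls below the threshold $\nu_-$ supplied by Lemma~\ref{Q>} with $a=\tfrac{1}{2}$. The dependence $j_\ast\sim \mu^{-d}$, once combined with the factor $2^{-j_\ast}$, is the source of the double-exponential law $\eta=\eta_0\mu B^{-\mu^{-d}}$ claimed in \eqref{parameter dependence 2}.

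\emph{Phase III (De~Giorgi pointwise bound and reopening the cylinder).} With the bad set controlled, apply Lemma~\ref{Q>} on an intrinsic cylinder centered in $I$ of the form $K_{4\rho}\times(t_\ast-\theta(2\rho)^p,t_\ast]$ with $\theta\sim (2^{-j_\ast}\kappa M)^{3-m-p}$, concluding
\begin{equation*}
u\geq \tfrac{1}{2}\cdot 2^{-j_\ast}\kappa M=:\eta M\quad\text{a.e.\ in } K_{2\rho}\times(t_\ast-\tfrac{1}{2}\theta(2\rho)^p,t_\ast].
\end{equation*}
Because the pointwise lower bound $\eta M$ holds on the time-slice $\{t=t_0\}$ for some $t_0$ in this interval, a final application of Lemma~\ref{K>} (with $\mu=1$ and intrinsic scaling for $\eta M$) propagates the positivity forward with the correct intrinsic clock, yielding the cylinder $K_{2\rho}\times(s+\tfrac{1}{2}\delta C^{p+m-3}(\eta M)^{3-m-p}\rho^p,\,s+\delta C^{p+m-3}(\eta M)^{3-m-p}\rho^p]$ claimed in \eqref{time expansion formula}.

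\emph{Main obstacle.} The routine parts are the inclusion step and the final reopening; the technical heart is the logarithmic shrinking estimate in Phase~II. It requires a tailored Caccioppoli inequality that respects the doubly degenerate structure (the term $u^{m-1}|Du|^p$ rather than $|Du|^p$), where the absence of translation invariance in $u$ and the fact that $u$ is only a supersolution force the use of the mollification \eqref{averaged equation} and a Lehtel\"a-type dampening rather than plain truncations. Making the resulting constant in the shrinking estimate genuinely independent of $M$ (so that the intrinsic scaling works) and carefully tracking the combinatorial dependence on $\mu$ through the choice of $j_\ast$ is where the explicit law $\eta=\eta_0\mu B^{-\mu^{-d}}$ is produced.
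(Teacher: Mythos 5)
Your three-phase outline captures the high-level architecture but has a genuine gap at the hinge between Phase~I and Phase~II, which is exactly where the paper deploys its central device (the exponential change of time variable). Here is the problem. After Phase~I you have
\begin{equation*}
\bigl|[u(\cdot,t)\geq \kappa M]\cap K_\rho\bigr|\geq \tfrac{1}{2}\mu\,|K_\rho|\quad\text{for } t\in\bigl(0,\ \delta M^{3-m-p}\rho^p\bigr].
\end{equation*}
But in Phase~III you want to apply Lemma~\ref{Q>} at the much smaller level $k_{j_*}=2^{-j_*}\kappa M$, and its intrinsic cylinder has time depth on the order of $(2^{-j_*}\kappa M)^{3-m-p}\rho^p$. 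Since $m+p>3$ forces $3-m-p<0$, this is larger than $\delta M^{3-m-p}\rho^p$ by a huge factor $(2^{-j_*}\kappa)^{3-m-p}/\delta$; the relevant cylinder simply does not fit inside the time window on which Phase~I gives you measure information. Your Phase~II as stated tacitly assumes uniform-in-time measure positivity across the long window, but Lemma~\ref{K>} does not deliver that.

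The paper resolves this by first noting that \eqref{epansion assumption} gives $|[u(\cdot,0)>\sigma M]\cap K_\rho|\geq\mu|K_\rho|$ for \emph{every} $\sigma\leq 1$, applies Lemma~\ref{K>} at each level $\sigma M$ separately (longer time windows for smaller $\sigma$), and then reparametrizes by setting $\sigma=e^{-\tau/(p+m-3)}$, $t(\tau)=e^\tau M^{3-m-p}\delta\rho^p$, $\nu(\tau)=M^{-1}e^{\tau/(p+m-3)}(\delta\rho^p)^{1/(p+m-3)}$ and $v(x,\tau)=\nu(\tau)u(x,t(\tau))$. The payoff is that $v$ solves an equation of the same structure and satisfies $|[v(\cdot,\tau)>M_0]\cap K_\rho|\geq\frac12\mu|K_\rho|$ for \emph{all} $\tau\geq 0$, which is the uniform-in-time statement your Phase~II needs. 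Without this (or an equivalent trick), the argument stalls. Also note a secondary divergence: the paper's Phase~II does not use a logarithmic truncation $\ln_+(\kappa M/(u+\varepsilon))$; it uses Caccioppoli estimates for the plain truncations $(v-k_j)_-$ together with the De~Giorgi isoperimetric inequality \cite[Ch.~I, Lemma~2.2]{Di} to obtain $|A_{j_*}|\leq\mu^{-1}(\gamma/j_*)^{(p-1)/p}|\mathcal{Q}_{8\rho}(\theta)|$. Your logarithmic route is plausible in the $p$-Laplacian setting, but here you would still have to justify the required Caccioppoli inequality for a log-type test function under the doubly degenerate structure and only supersolution regularity, which the paper's route avoids. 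Finally, your claim that $\eta=\eta_0\mu B^{-\mu^{-d}}$ comes from $j_*\sim\mu^{-d}$ is not quite the paper's bookkeeping: in the paper $j_*$ is chosen so that $j_*\geq\gamma(\frac12\mu\nu_-)^{-p/(p-1)}$ with $\nu_-$ independent of $\mu$, and the double-exponential law comes from the factor $b_2=\exp\{16^p 2^{j_*(p+m-3)}/((p+m-3)\delta\mu^{p+m-3})\}$ appearing when the exponential time change is undone in Step~3 — i.e., it is generated by the time reparametrization, not by the level count alone.
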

\begin{proof} Without loss of generality, we may prove the proposition in the case that $(y,s)=(0,0)$. This is always possible
by using a translation. In the following, we divide our proof in three steps.

\emph{Step 1. Changing the time variable.}
Observe that assumption \eqref{epansion assumption} implies
\begin{equation*}\left|[u(\cdot,0)>\sigma M]\cap K_\rho\right|\geq \mu |K_\rho|\end{equation*}
for all $\sigma\leq1$.
We now use Lemma \ref{K>} with $M$ replaced by $\sigma M$ and conclude that, with $\kappa$, $\delta$ defined in \eqref{parameter1}, there holds
\begin{equation}\label{u ex}\left|\left[u\left(\cdot,(\sigma M)^{3-m-p}\delta \rho^p\right)>\sigma \kappa M\right]\cap K_\rho\right|\geq \frac{1}{2}\mu |K_\rho|\quad\mathrm{for\ \ all}\quad\sigma\leq1.\end{equation}
For $\tau\geq 0$, set
$$\sigma=e^{-\frac{\tau}{p+m-3}},\quad t(\tau)=\frac{e^\tau}{M^{p+m-3}}\delta \rho^p\quad \mathrm{and}\quad \nu(\tau)=\frac{1}{M}e^{\frac{\tau}{p+m-3}}(\delta \rho^p)^{\frac{1}{p+m-3}}.$$
At this stage, we set $v(x,\tau)=\nu(\tau)u(x,t(\tau))$.
Notice from \eqref{u ex} that
\begin{equation}\label{U ex}\left|\left[v\left(\cdot, \tau\right)>M_0\right]\cap K_\rho\right|\geq \frac{1}{2}\mu |K_\rho|\end{equation}
for all $\tau\geq0$, where $M_0:=\kappa (\delta\rho^p)^{1/(p+m-3)}$.
Denote by $\tilde A:E\times\mathbb R^+\times \mathbb R^{N+1}\to \mathbb R^N$ the vector field:
$$\tilde A(x,\tau,v,Dv)=\nu(\tau)^{p+m-2}A\left[x,t(\tau),u(x,t(\tau)),Du(x,t(\tau))\right].$$
Then the function $v$ is a weak supersolution to $\partial_\tau v-\mathrm{div}\tilde A(x,\tau,v,Dv)=0,$
i.e., the inequality
\begin{equation*}\label{weak form}
-\int_0^{+\infty} \int_Ev\frac{\partial \Phi }{\partial \tau}dxd\tau+ \int_0^{+\infty} \int_E\tilde A(x,\tau,v,Dv)\cdot D\Phi dxd\tau\geq 0\end{equation*}
holds for any non-negative test function $\Phi\in C_0^\infty[E\times (0,+\infty)]$. The structure conditions for $\tilde A(x,\tau,v,Dv)$ are as follows:
\begin{equation*}\label{structure}
	\begin{cases}
	\tilde A(x,\tau,v,Dv)\cdot Dv\geq \widetilde{C_0}|v|^{m-1}|Dv|^p,\\
	|\tilde A(x,\tau,v,Dv)|\leq \widetilde{C_1}|v|^{m-1}|Dv|^{p-1},
	\end{cases}
\end{equation*}
for some constants $\widetilde{C_0}$ and $\widetilde{C_1}$ depending only upon $m$, $p$, $C_0$ and $C_1$. For the proof, see for example \cite{FS}.

\emph{Step 2. Expanding the positivity of $v$.}
Let $\zeta$ be a piecewise smooth cut-off function vanishes on the parabolic boundary $Q_{16\rho}^+(\theta)$ such that
\begin{equation*}\zeta=1\quad\mathrm{in}\quad\mathcal Q_{8\rho}(\theta):=K_{8\rho}\times(\theta(8\rho)^p,\theta(16\rho)^p],\quad |D\zeta|\leq\frac{1}{8\rho}\quad\mathrm{and}\quad|\partial_t\zeta|\leq\frac{1}{\theta(8\rho)^p}.\end{equation*}
In the case $m>1$ and $p> 2$, we use the Caccioppoi estimate \eqref{CC} for $(v-k)_-$ and the test function $\zeta$ defined above to infer
\begin{equation}\begin{split}\label{gradient1}\iint_{\mathcal Q_{8\rho}(\theta)}&v^{m-1}|D(v-k)_-|^pdxd\tau
\\ &\leq \tfrac{1}{2}\iint_{Q_{16\rho}^+(\theta)}(v-k)_-^2\left|\frac{\partial \zeta^p}{\partial \tau}\right|dxd\tau+\gamma \iint_{Q_{16\rho}^+(\theta)}v^{m-1}(v-k)_-^p|D\zeta|^pdxd\tau
\\ &\leq \gamma\left(\frac{k^2}{\theta(8\rho)^p}
+\frac{k^{p+m-1}}{(8\rho)^p}\right)|\mathcal Q_{8\rho}(\theta)|.\end{split}\end{equation}
In the case $m+p>3$ and $0<m<1$, we apply the Caccioppoi estimate \eqref{CCC} for $(v-k)_-$ with the same cut-off function $\zeta$ as above to get a gradient estimate
\begin{equation}\begin{split}\label{gradient2}\iint_{\mathcal Q_{8\rho}(\theta)}& |D(v-k)_-|^pdxd\tau
\\ \leq & \gamma k^{2-m}\iint_{Q_{16\rho}^+(\theta)}(v-k)_-\left|\frac{\partial \zeta}{\partial \tau}\right|dxd\tau+\gamma k^p\iint_{Q_{16\rho}^+(\theta)}\chi_{[v\leq k]}|D\zeta|^pdxd\tau
\\ \leq & \gamma\frac{k^p}{(8\rho)^p}\left(1+\frac{1}{\theta k^{p+m-3}}\right)|\mathcal Q_{8\rho}(\theta)|.\end{split}\end{equation}
At this point, we introduce the levels $k_j=2^{-j}M_0$ with $j=0,1,\cdots j_*$ where $j_*$ is to be taken depending on $\mu$, $m$, $p$, $C_0$ and $C_1$. In the gradient estimates \eqref{gradient1} and \eqref{gradient2}, we choose $k=k_j$ and
\begin{equation}\label{theta}\theta=k_{j_*}^{3-p-m}=(2^{j_*}M_0^{-1})^{p+m-3}.\end{equation}
If we set $v_j=\max\{k_{j+2},v\}$, then the estimates \eqref{gradient1} and \eqref{gradient2} imply that
\begin{equation}\label{gradient3}\iint_{\mathcal Q_{8\rho}(\theta)}|D(v_j-k_j)_-|^pdxd\tau\leq\gamma\frac{k_j^p}{(8\rho)^p}|\mathcal Q_{8\rho}(\theta)|.\end{equation}
To proceed further, we set
\begin{equation*}A_j(\tau)=\{v_j(\cdot,\tau)<k_j\}\cap K_{8\rho}\quad \mathrm{and}\quad A_j=\{v_j<k_j\}\cap \mathcal Q_{8\rho}(\theta).\end{equation*}
We observe from the definition of $v_j$ that $v_j=v$ on the set $A_j(\tau)\backslash A_{j+1}(\tau)$.
Then we use Lemma 2.2 in \cite[chapter I]{Di} and \eqref{U ex} to infer that
\begin{equation}\label{AD}\frac{1}{2}k_{j}|A_{j+1}(\tau)|\leq \gamma\mu^{-1}\rho\int_{[k_{j+2}< v(\cdot,\tau)<k_j]\cap K_{8\rho}}|Dv(\cdot,\tau)|dx.\end{equation}
Integrating both sides of \eqref{AD} with respect to $\tau$ over $(\theta(8\rho)^p,\theta(16\rho)^p)$, using H\"older's inequality and taking into account \eqref{gradient3}, we conclude that
\begin{equation*}\begin{split}\frac{k_{j}}{2}|A_{j+1}|&\leq \frac{\gamma\rho}{\mu}\left(\iint_{\mathcal Q_{8\rho}(\theta)}
|D(v-k_j)_-|^pdxd\tau\right)^{\frac{1}{p}}|A_j\backslash A_{j+1}|^{\frac{p-1}{p}}\leq\frac{\gamma k_j}{\mu}|\mathcal Q_{8\rho}(\theta)|^{\frac{1}{p}}|A_j\backslash A_{j+1}|^{\frac{p-1}{p}}.
\end{split}\end{equation*}
We now proceed as in the proof of Proposition 4.1 in \cite[chapter 2]{DGV}. From the estimate above, we obtain
\begin{equation}\label{A}\left|\{v<k_{j_*}\}\cap \mathcal Q_{8\rho}(\theta)\right|=|A_{j_*}|\leq \frac{1}{\mu}\left(\frac{\gamma}{j_*}\right)^{\frac{p-1}{p}}|\mathcal Q_{8\rho}(\theta)|,\end{equation}
where $j_*$ is to be chosen.
At this stage, we are ready to invoke Lemma \ref{Q>}. With the choice of $\theta$ in \eqref{theta}, we obtain from \eqref{nu-m<1} that $\nu_-=\gamma^{-\frac{N}{p}}
2^{-2N-3-\frac{3N}{p}-\frac{2N^2}{p}}$. We now choose $j_*$ be an integer depending only upon $m$, $p$, $N$, $C_0$ and $C_1$, such that
\begin{equation*}\label{j*}j_*\geq\gamma\left(\tfrac{1}{2}\mu\nu_-\right)^{-\frac{p}{p-1}}.\end{equation*}
Then we apply Lemma \ref{Q>} to $v$
with $M=k_{j_*}$ and $a=\frac{1}{2}$ over the parabolic cylinder
$\mathcal Q_{8\rho}(\theta)=(0,\tau_*)+Q_{8\rho}^-(\theta)$ where $\tau_*=\theta (16\rho)^p$,
and conclude that
\begin{equation}\label{v>}v(x,\tau)\geq \tfrac{1}{128}2^{-j_*}\mu(\delta\rho^p)^{\frac{1}{p+m-3}}\quad\mathrm{a.\ e.}\quad\mathrm{in}\quad
(0,\tau_*)+Q_{4\rho}^-(\theta).\quad\end{equation}

\emph{Step 3. Expanding the positivity of $u$.} We shall adopt the same procedure as in the proof of Proposition 4.1 in \cite{FS}.
As $\tau$ ranges over the interval $(\theta(16\rho)^p-\theta(4\rho)^p,\theta(16\rho)^p)$, the function $e^{\frac{\tau}{p+m-3}}$ ranges over
\begin{equation*}b_1:=\exp\left\{\frac{(16^p-4^p)2^{j_*(p+m-3)}}{(p+m-3)\delta\mu^{p+m-3}}\right\}\leq e^{\frac{\tau}{p+m-3}}\leq
\exp\left\{\frac{16^p2^{j_*(p+m-3)}}{(p+m-3)\delta\mu^{p+m-3}}\right\}=:b_2.\end{equation*}
Let $\eta=(128 b_2)^{-1}2^{-j_*}\mu$. Transforming \eqref{v>} back to the time variable $t$, we have
\begin{equation*}u(x,t)\geq \eta M\quad\mathrm{a.\ e.}\quad\mathrm{in}\quad K_{2\rho}\times\left(\frac{C^{p+m-3}}{(\eta M)^{p+m-3}}\frac{1}{2}\delta \rho^p,
\frac{C^{p+m-3}}{(\eta M)^{p+m-3}}\delta \rho^p\right]\end{equation*}
for a suitable $C$ depending only upon $p$, $m$, $C_0$, $C_1$ and $\mu$. We have thus proved the proposition.
\end{proof}
\section{The Hot Alternative}\label{Hot alternative}
The proof of Theorem \ref{main result1} can be reduced by considering two alternatives. In this section, we shall discuss a situation which will be called the hot alternative.
Before proceeding further, we remark that the exponential decay of $\eta$ on parameter $\mu$ in Proposition \ref{expansion in time} need to be improved.
The next Lemma asserts that the functional dependence \eqref{parameter dependence 2} can be sharpened to $\eta_*=\eta_0\mu^{d+1}$. Our proof follows the scheme of \cite[chapter 3, Proposition 7.1]{DGV}, which makes no use of covering arguments.
\begin{lemma}\label{refined} Let $u$ be a positive, weak supersolution of \eqref{DDP}-\eqref{mp} in an open set which compactly contains $E_T$. Suppose that for some $(y,s)\in E_T$ and some $\rho>0$ there holds
\begin{equation}\label{k>>}\left|[u(\cdot,s)\geq M]\cap K_\rho(y)\right|\geq \mu|K_\rho(y)|\end{equation}
for some $M>0$ and some $\mu\in(0,1)$. There exist constants $\eta_0$, $\delta$ in $(0,1)$ and $C$, $d>0$ depending only upon $m$, $p$, $N$, $C_0$ and $C_1$, such that
\begin{equation}u(\cdot,t)\geq \eta_*M\quad\mathrm{a.\ e.}\quad\mathrm{in}\quad K_{2\rho}(y)\times\left(s+\frac{C^{p+m-3}}{(\eta_*M)^{p+m-3}}\frac{1}{2}\delta\rho^p,s+\frac{C^{p+m-3}}{(\eta_*M)^{p+m-3}}\delta\rho^p\right],\end{equation}
where $\eta_*=\eta_0\mu^{d+1}$.
\end{lemma}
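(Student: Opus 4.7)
The plan is to follow the scheme of \cite[Chapter~3, Proposition~7.1]{DGV}: the doubly-exponential factor $B^{-\mu^{-d}}$ in Proposition \ref{expansion in time} originates entirely from Step~2 of its proof, where the number of De~Giorgi halvings $j_*\sim \mu^{-p/(p-1)}$ forces the final level $2^{-j_*}M_0$ to depend exponentially on a negative power of $\mu$. The refinement eliminates this dependence by performing a \emph{single} De~Giorgi step at an already polynomial level in $\mu$, provided the required smallness condition is verified directly from measure-theoretic information that propagates forward in time with only polynomial loss.

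\textbf{Step 1 (Polynomial propagation of measure).} Starting from the assumption $|[u(\cdot,s)\geq M]\cap K_\rho(y)|\geq \mu |K_\rho(y)|$, invoke Lemma \ref{K>} with the parameters $\kappa\sim\mu$ and $\delta\sim\mu^{p+1}$ from \eqref{parameter1}. This gives
\begin{equation*}
|[u(\cdot,t)\geq \kappa M]\cap K_\rho(y)|\geq \tfrac{1}{2}\mu|K_\rho(y)|\quad\text{for every}\quad t\in (s,\,s+\delta M^{3-m-p}\rho^p].
\end{equation*}
Every constant produced here is polynomial in~$\mu$. Next, perform the logarithmic change of the time variable from Step~1 of the proof of Proposition \ref{expansion in time} and pass to the rescaled supersolution~$v$; by \eqref{U ex} the preserved measure lower bound now holds for \emph{every} $\tau\geq 0$ at the fixed level $M_0 = \kappa(\delta\rho^p)^{1/(p+m-3)}$.

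\textbf{Step 2 (Single-step De~Giorgi with polynomial smallness).} Fix an exponent $a>0$ to be chosen and set $\bar M=c\mu^{a}M_0$. Apply Lemma \ref{Q>} directly on a cylinder $\mathcal Q=(0,\tau_*)+Q_{8\rho}^-(\theta)$ with $\theta=\bar M^{3-m-p}$, \emph{without} first running the halving argument \eqref{gradient3}--\eqref{A}. The required smallness condition
\begin{equation*}
|[v\leq \bar M]\cap \mathcal Q|\leq \nu_-\,|\mathcal Q|
\end{equation*}
is then verified by combining the Caccioppoli estimate \eqref{CC} or \eqref{CCC} for $(v-\bar M)_-$ with the measure information from Step~1 integrated in time: since the good set has measure at least $\tfrac{1}{2}\mu|K_\rho(y)|$ at every time slice, the bad set has measure bounded by a polynomial power of $\mu$ times $|\mathcal Q|$. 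By \eqref{nu-m<1}, the threshold $\nu_-$ itself degrades only polynomially in $\mu$, so the comparison between these two polynomial quantities fixes the exponent $a$ and yields $v\geq \tfrac{1}{2}\bar M$ a.e.\ on the inner cylinder $(0,\tau_*)+Q_{4\rho}^-(\theta)$.

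\textbf{Step 3 (Undoing the change of variables).} Transform back to~$u$ exactly as in Step~3 of Proposition \ref{expansion in time} to obtain the claimed pointwise bound on $K_{2\rho}(y)$ over the intrinsic time interval determined by $\eta_*M$. Since every factor entering the final expression is polynomial in $\mu$, one arrives at the form $\eta_*=\eta_0\mu^{d+1}$.

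The main obstacle is the bookkeeping in Step~2: the exponent $a$ must be chosen large enough to outweigh the polynomial degradation of $\nu_-$ in \eqref{nu-m<1} (which behaves like $\mu^{N+2}$ at worst in the present normalization), yet small enough that the resulting lower bound $\eta_*=\eta_0\mu^{d+1}$ has only polynomial — not exponential — dependence on~$\mu$. Balancing these two opposing polynomial requirements is precisely the content of \cite[Chapter~3, Proposition~7.1]{DGV}, and the exponent $d$ emerging from this balance is determined solely by $N$, $p$, $m$, $C_0$ and $C_1$.
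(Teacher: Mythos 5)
Your Step 2 contains a genuine gap that cannot be repaired by adjusting the parameter~$a$. Having $|[v(\cdot,\tau)\geq M_0]\cap K_\rho|\geq \tfrac{1}{2}\mu|K_\rho|$ on every time slice only tells you that the complement satisfies $|[v(\cdot,\tau)<M_0]\cap K_\rho|\leq(1-\tfrac12\mu)|K_\rho|$, which is close to \emph{full} measure when $\mu$ is small. Lowering the level to $\bar M=c\mu^a M_0$ does not, by itself, improve this: for a supersolution that sits exactly at height $M_0-$ on the bad set, the sublevel set at $\bar M$ could still have measure $(1-\tfrac12\mu)|K_\rho|$. The Caccioppoli estimate alone cannot convert a \emph{positivity} fraction $\mu$ into the \emph{smallness} fraction $\nu_-$ needed for Lemma~\ref{Q>}; this conversion is precisely the job of the De~Giorgi isoperimetric lemma applied along a sequence of halved levels, which is why the halving loop cannot be skipped. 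Moreover, even if Step~2 were fixable, your Step~3 would still fail: the change of variables in Step~1 of Proposition~\ref{expansion in time} produces a factor $b_2=e^{\tau_*/(p+m-3)}$ that depends exponentially on $\theta(16\rho)^p$, and $\theta$ is tied to the final level chosen in the cylinder; transforming back does not remove an exponential dependence that is already present in the time variable.

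The paper's actual proof uses two ideas you did not invoke. First, a local clustering lemma (cited as \cite{localclustering}) applied to $(M-v)_+/M$ after a Caccioppoli gradient bound produces a \emph{small} subcube $K_{\eta_*\rho}(x_0)$, of side polynomially small in $\mu$, on which $u$ exceeds a fixed fraction of $\kappa M$ on at least \emph{half} the measure. This converts the original measure fraction $\mu$ into the fixed fraction $\tfrac12$ at a polynomial cost in the cube size. Second, one applies Proposition~\ref{expansion in time} once with $\mu=\tfrac12$ (a universal cost), and then \emph{iterates} it with $\mu=1$ to grow the cube by successive doublings; each such application costs only a fixed constant factor $\bar\eta$, and the number of doublings needed to pass from side $\eta_*\rho$ to side $2\rho$ is $\log_2(2/\eta_*)\sim d_1\log(1/\mu)$. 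The total degradation is $\bar\eta^{\log_2(2/\eta_*)}$, which is a power of $\mu$ — this is the source of the polynomial dependence. Without the clustering lemma and the doubling iteration, the exponential dependence of $\eta$ on $\mu$ cannot be circumvented.
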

\begin{proof} It suffices to prove the lemma in the case that $(y,s)=(0,0)$. This is always possible by using a translation. By Lemma \ref{K>}, there exist $\delta$ and $\kappa$
defined in \eqref{parameter1}, such that
\begin{equation}\label{kkk}\left|[u(\cdot,t)\geq \kappa M]\cap K_\rho(y)\right|\geq \frac{1}{2}\mu|K_\rho(y)|\end{equation}
for almost every $t\in\left(0,\delta M^{3-m-p}\rho^p\right]$. In the following, the proof will be split into two steps.

\emph{Step 1. The estimate \eqref{kkk} revisited.} We claim that there exists a time level $t_1$ in the range
\begin{equation}\label{t_1}M^{3-m-p}\frac{1}{2}\delta\rho^p<t_1\leq M^{3-m-p}\delta\rho^p,\end{equation}
and $x_0\in K_\rho$
such that, with $\eta_*=\eta_1\mu^{d_1}$,
\begin{equation}\label{ut1}\left|[u(\cdot,t_1)>\tfrac{1}{2}\kappa M]\cap K_{\eta_*\rho}(x_0)\right|>\frac{1}{2}|K_{\eta_* \rho}(x_0)|\end{equation}
where $d_1$, $\eta_1$ are the constants depending only upon $m$, $p$, $C_0$ and $C_1$.

In order to prove this assertion, we shall use Caccioppoli estimates in Proposition \ref{C1} and Proposition \ref{C2} for $(u-M)_-$ with the choice of a cut-off function $\zeta$ such that
$$\zeta=1\quad\mathrm{in}\quad\mathcal Q,\quad\zeta=0\quad\mathrm{on}\quad\partial_P\mathcal Q^\prime,$$
and
$$0\leq \frac{\partial \zeta}{\partial t}\leq \frac{4^p}{\delta M^{3-m-p}\rho^p},\quad|D\zeta|\leq\frac{4}{\rho},$$
where
$$\mathcal Q:=K_\rho\times \left(\frac{1}{2}\frac{\delta}{M^{p+m-3}}\rho^p,\frac{\delta}{M^{p+m-3}}\rho^p\right],\quad \mathcal Q^\prime:=K_{2\rho}\times \left(0,\frac{\delta}{M^{p+m-3}}\rho^p\right].$$
In the case $p> 2$ and $m>1$, we apply Caccioppoli estimate \eqref{CC} for $(u-M)_-$ with the cut-off function $\zeta$ defined above to obtain
\begin{equation*}\begin{split}\iint_{\mathcal Q}u^{m-1}|D(u-M)_-|^pdz\leq&\iint_{\mathcal Q^\prime}u^{m-1}|D(u-M)_-|^p\zeta^pdz\\ \leq &\iint_{\mathcal Q^\prime}(u-M)_-^2\left|\frac{\partial \zeta^p}{\partial t}\right|dz+2\gamma \iint_{\mathcal Q^\prime}u^{m-1}(u-M)_-^p|D\zeta|^pdz\\
\leq &\gamma \frac{1}{\delta M^{1-m-p}\rho^p}|\mathcal Q|.\end{split}\end{equation*}
If we introduce a cut-off function $v=\max\left\{u,\frac{1}{4}\kappa M\right\}$, then we obtain the estimate
\begin{equation*}\begin{split}\left(\frac{\kappa M}{4}\right)^{m-1}\iint_{\mathcal Q}|D(v-M)_-|^pdz&\leq \iint_{\mathcal Q}v^{m-1}|D(v-M)_-|^pdz
\\&=\iint_{\mathcal Q\cap [v=u]}v^{m-1}|D(v-M)_-|^pdz+\iint_{\mathcal Q\cap [v>u]}v^{m-1}|D(v-M)_-|^pdz\\&\leq \iint_{\mathcal Q}u^{m-1}|D(u-M)_-|^pdz\\&\leq \gamma \frac{1}{\delta M^{1-m-p}\rho^p}|\mathcal Q|,\end{split}\end{equation*}
since $m>1$. Recalling from \eqref{parameter1}, we rewrite the above estimate as follows:
\begin{equation}\begin{split}\label{Dv}\iint_{\mathcal Q}|D(v-M)_-|^pdz\leq \frac{\gamma M^p}{\delta\mu^{m-1}\rho^p}|\mathcal Q|.\end{split}\end{equation}
From the definition of $v$ and \eqref{kkk}, we conclude that
\begin{equation}\label{kkkk}\left|[v(\cdot,t)\geq \kappa M]\cap K_\rho(y)\right|\geq \frac{1}{2}\mu|K_\rho(y)|\end{equation}
for almost every $t\in\left(0,\delta M^{3-m-p}\rho^p\right]$. To proceed further, we introduce the change of variables
$$y=\frac{x}{\rho},\quad \tau=\frac{t}{\delta M^{3-m-p}\rho^p}\quad\mathrm{and}\quad \tilde v=\frac{(M-v)_+}{M}.$$
It follows from \eqref{parameter1} and \eqref{Dv} that
\begin{equation}\begin{split}\label{Dvv}\int_{\frac{1}{2}}^1\int_{K_1}|D\tilde v(y,\tau)|^pdyd\tau\leq \frac{\gamma}{\mu^{m+p}}.\end{split}\end{equation}
Moreover, we rewrite the  estimate \eqref{kkkk} in terms of $\tilde v$ as follows:
\begin{equation}\label{kkkkk}\left|[\tilde v(\cdot,\tau)<1- \kappa ]\cap K_1\right|\geq \frac{1}{2}\mu\quad\mathrm{for}\ \mathrm{all}\quad\tau\in\left(\tfrac{1}{2},1\right].\end{equation}
From \eqref{Dvv} and \eqref{kkkkk}, there exists a time level $\tau_1\in\left(\frac{1}{2},1\right]$ such that
\begin{equation*}\int_{K_1}|D\tilde v(\cdot,\tau_1)|dy\leq \frac{\gamma}{\mu^{\frac{m+p}{p}}}\quad\mathrm{and}\quad \left|[\tilde v(\cdot,\tau_1)<1- \kappa ]\cap K_1\right|\geq \frac{\mu}{2}.\end{equation*}
At this point, we infer that there exists a $y_0\in K_1$ such that,
\begin{equation}\begin{split}\label{eta8}\left|[\tilde v(\cdot,\tau_1)<1- \tfrac{1}{2}\kappa ]\cap K_{\eta_*}(y_0)\right|\geq \frac{1}{2}|K_{\eta_*}(y_0)|,\quad\mathrm{where}\quad \eta_*=\eta_1\mu^{d_1},\end{split}\end{equation}
and $d_1$, $\eta_1$ depending only upon $m$ and $p$ (see for instance \cite{localclustering}). Transforming the above estimate back to the original variables $x$, $t$ and original function $v$, we see that there exists a time level $t_1$ in the range \eqref{t_1} and
\begin{equation}\label{vt1}\left|[v(\cdot,t_1)>\tfrac{1}{2}\kappa M]\cap K_{\eta_*\rho}(x_0)\right|>\frac{1}{2}|K_{\eta_* \rho}(x_0)|.\end{equation}
Recalling from $v=\max\left\{u,\frac{1}{4}\kappa M\right\}$ that
$$\left[v(\cdot,t_1)>\tfrac{1}{2}\kappa M\right]\cap K_{\eta_*\rho}(x_0)=\left[u(\cdot,t_1)>\tfrac{1}{2}\kappa M\right]\cap K_{\eta_*\rho}(x_0).$$
Then the estimate \eqref{vt1} implies \eqref{ut1}, which proves the claim for the case $p>2$ and $m>1$.

We now turn our attention to the case $p+m>3$ and $0<m<1$.
We use Caccioppoli estimate \eqref{CCC} for $(u-M)_-$ with the same cut-off function $\zeta$ to obtain
\begin{equation*}\begin{split}M^{m-1}\iint_{\mathcal Q^\prime}|D(u-M)_-|^p\zeta^pdz
&\leq\gamma M\iint_{\mathcal Q^\prime}(u-M)_-\left|\frac{\partial \zeta}{\partial t}\right|dz+\gamma M^{m+p-1}\iint_{\mathcal Q^\prime}\chi_{[u\leq M]}|D\zeta|^pdz\\&\leq \gamma\frac{M^{p+m-1}}{\delta\rho^p}|\mathcal Q|,\end{split}\end{equation*}
which yields
\begin{equation}\begin{split}\label{Duu}\iint_{\mathcal Q}|D(u-M)_-|^pdz\leq \gamma\frac{M^p}{\delta\rho^p}|\mathcal Q|.\end{split}\end{equation}
Observe that \eqref{Duu} is even better than \eqref{Dv} and the claim follows by the same method as in the previous case.

\emph{Step 2. Iterative arguments.} The proof of this part follows in a same manner as the proof of \cite[chapter 3, Proposition 7.1]{DGV} and we just sketch the proof.
By Proposition \ref{expansion in time}, there exist $\tilde\eta$ and $\tilde\delta$ in $(0,1)$ and $C$ depending only upon the data $p$, $m$, $C_0$ and $C_1$, such that
\begin{equation}\label{start}u(\cdot,t_2)\geq \frac{1}{2}\kappa\tilde \eta M\quad\mathrm{a.\ e.}\quad\mathrm{in}\quad K_{2\eta_*\rho}(x_0),\end{equation}
for all times
\begin{equation*}t_1+\frac{C^{p+m-3}}{(\kappa\tilde \eta M)^{p+m-3}}\frac{1}{2}\tilde\delta(\eta_*\rho)^p\leq t_2\leq t_1+\frac{C^{p+m-3}}{(\kappa\tilde \eta M)^{p+m-3}}\tilde\delta(\eta_*\rho)^p.\end{equation*}
At this stage, we use Proposition \ref{expansion in time} again from \eqref{start} with $M$ replaced by $\frac{1}{2}\kappa\tilde \eta M$ and $\mu=1$. This implies that
there exist $\bar\eta$ and $\bar\delta$ in $(0,1)$ and $\bar C$ depending only upon the data $p$, $m$, $C_0$, $C_1$, such that
\begin{equation}\label{start1}u(\cdot,t_3)\geq \frac{1}{2}\kappa\bar \eta\tilde\eta M,\end{equation}
for almost every $x\in K_{4\eta_*\rho}(x_0)$ and all times
\begin{equation}\label{start time}t_2+\frac{\bar C^{p+m-3}}{(\kappa\bar\eta\tilde \eta M)^{p+m-3}}\frac{1}{2}\bar\delta(2\eta_*\rho)^p\leq t_3\leq t_1+\frac{\bar C^{p+m-3}}{(\kappa\bar \eta \tilde\eta M)^{p+m-3}}\bar\delta(2\eta_*\rho)^p.\end{equation}
From \eqref{start1} and \eqref{start time}, repeated application of Proposition \ref{expansion in time} with $\mu=1$ enable us to deduce that
\begin{equation*}u(\cdot,t)\geq \frac{1}{2}\kappa\bar \eta^n\tilde\eta M\quad\mathrm{a.\ e.}\quad\mathrm{in}\quad K_{2^{n+2}\eta_*\rho}(x_0),\end{equation*}
for any time level $t$ satisfies
\begin{equation*}t_{n+1}+\frac{\bar C^{p+m-3}}{(\kappa\bar\eta^n\tilde \eta M)^{p+m-3}}\frac{1}{2}\bar\delta(2^{n+2}\eta_*\rho)^p\leq t\leq t_{n+1}+\frac{\bar C^{p+m-3}}{(\kappa\bar \eta^n \tilde\eta M)^{p+m-3}}\bar\delta(2^{n+2}\eta_*\rho)^p.\end{equation*}
The iteration stops when $2^{n+2}\eta_*=2$ and the estimate \eqref{k>>} follows with the choice
$$d=-d_1\log_2\bar\eta\quad\mathrm{and}\quad\eta_0=128^{-1}\bar \eta^{-2}(\frac{2}{\eta_1})^{\log_2\bar\eta}\tilde\eta.$$
The proof of the lemma is now complete.
\end{proof}
We now state the main result of this section. Here and subsequently, the condition \eqref{assumption1} introduced in the following proposition is referred as \emph{hot alternative}.
\begin{proposition}\label{hot pro} Let $d$, $\eta_0$ and $C$ be as in Lemma \ref{refined}. Suppose that $u$ is a positive, weak super-solution in an open set which compactly contains
$K_2\times(0,S_1]$ where
\begin{equation}\label{T1 time}S_1=\delta\left(\frac{C}{\eta_0}\right)^{p+m-3}.\end{equation}
If there exist a time level $t_0\in(0,1]$ and a $k>1$,
such that
\begin{equation}\label{assumption1}|[u(\cdot,t_0)>k^{1+\frac{1}{d}}\cap K_1]|>k^{-\frac{1}{d}}|K_1|,\end{equation}
then
\begin{equation}\label{result11}u\geq \eta_0\quad\mathrm{a.\ e.\ \ in}\quad K_2\times\left(1+\tfrac{1}{2}S_1,S_1\right].\end{equation}
\end{proposition}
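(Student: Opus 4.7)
The plan is to apply Lemma \ref{refined} directly, with a carefully chosen pair $(M,\mu)$ whose dependence on the free parameter $k$ exactly cancels in the resulting lower bound. This is the whole reason for having established Lemma \ref{refined} with the sharp polynomial dependence $\eta_*=\eta_0\mu^{d+1}$ rather than the weaker exponential form \eqref{parameter dependence 2}.

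Concretely, I would apply Lemma \ref{refined} at the point $(y,s)=(0,t_0)$, with $\rho=1$, level
\[M=k^{1+\frac{1}{d}},\qquad \mu=k^{-\frac{1}{d}}.\]
The hypothesis \eqref{assumption1} is exactly the measure condition \eqref{k>>} for these choices, so the lemma is applicable. The conclusion then yields
\[u(x,t)\geq \eta_*M\quad\mathrm{a.\ e.\ in}\quad K_2\times\left(t_0+\tfrac{1}{2}\frac{C^{p+m-3}}{(\eta_*M)^{p+m-3}}\delta,\ t_0+\frac{C^{p+m-3}}{(\eta_*M)^{p+m-3}}\delta\right].\]
The key computation is that
\[\eta_*M=\eta_0\mu^{d+1}\cdot M=\eta_0\,k^{-\frac{d+1}{d}}\cdot k^{\frac{d+1}{d}}=\eta_0,\]
so the resulting lower bound $\eta_0$ is \emph{independent of the free parameter $k$}. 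Substituting $\eta_*M=\eta_0$ into the time interval and recalling the definition \eqref{T1 time} of $S_1$, the cylinder becomes
\[K_2\times\left(t_0+\tfrac{1}{2}S_1,\ t_0+S_1\right].\]

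Finally, I would verify the claimed inclusion. Since $t_0\in(0,1]$, one has $t_0+\tfrac{1}{2}S_1\leq 1+\tfrac{1}{2}S_1$ and $S_1\leq t_0+S_1$, so
\[\left(1+\tfrac{1}{2}S_1,\ S_1\right]\subset\left(t_0+\tfrac{1}{2}S_1,\ t_0+S_1\right],\]
provided $1+\tfrac{1}{2}S_1<S_1$, i.e.\ $S_1>2$. Since replacing $\eta_0$ by any smaller positive constant preserves the validity of Lemma \ref{refined}, we may assume $\eta_0$ is chosen small enough that $\delta(C/\eta_0)^{p+m-3}>2$; with this convention \eqref{result11} follows.

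There is essentially no analytic obstacle, as the proposition is the conceptual payoff of all the hard work done in the expansion-of-positivity section: the sharp polynomial dependence of $\eta_*$ on $\mu$ (Lemma \ref{refined}) is exactly strong enough to absorb the large trade-off between the level $M=k^{1+1/d}$ and the smaller measure fraction $\mu=k^{-1/d}$ in the assumption. The only subtlety worth noting is the balancing of exponents $1+1/d$ and $-1/d$, which is forced by the condition $(d+1)\cdot\frac{1}{d}-(1+\frac{1}{d})=0$; had the dependence in Lemma \ref{refined} been any weaker, the parameter $k$ would not cancel and the hot alternative would yield a degenerating, $k$-dependent lower bound instead of the universal constant $\eta_0$.
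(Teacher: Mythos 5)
Your proposal is correct and follows essentially the same route as the paper: apply Lemma \ref{refined} at $(0,t_0)$ with $\rho=1$, $M=k^{1+1/d}$, $\mu=k^{-1/d}$, observe that the polynomial dependence $\eta_*=\eta_0\mu^{d+1}$ makes $\eta_*M=\eta_0$ independent of $k$, and then absorb the $t_0$-shift of the resulting time window into the stated interval after arranging $S_1\geq 2$. The one small inaccuracy is the assertion that ``replacing $\eta_0$ by any smaller positive constant preserves the validity of Lemma \ref{refined}'': this is not literally true, because shrinking $\eta_0$ pushes both endpoints of the time window $\bigl(s+\tfrac{1}{2}C^{p+m-3}(\eta_*M)^{3-m-p}\delta,\,s+C^{p+m-3}(\eta_*M)^{3-m-p}\delta\bigr]$ further into the future, so the weakened conclusion is over a (possibly disjoint) later interval and does not follow from the original one; the paper instead reduces the upstream constant $\eta_1$ appearing in the local-clustering step \eqref{eta8}, which re-derives the entire chain of Lemma \ref{refined} with a genuinely smaller $\eta_0$ and a correspondingly longer time window built in, so that $S_1\geq 2$ can indeed be guaranteed.
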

\begin{proof}
By reducing $\eta_1$ in \eqref{eta8}, we deduce $1+\tfrac{1}{2}C^{m+p-3}\eta_0^{-(p+m-3)}\delta\leq C^{m+p-3}\eta_0^{-(p+m-3)}\delta$.
The estimate \eqref{result11} follows immediately from Lemma \ref{refined} with $\rho=1$, $M=k^{1+\frac{1}{d}}$ and $\mu=k^{-\frac{1}{d}}$. This completes the proof.
\end{proof}
\section{The Cold Alternative}\label{Cold alternative}
In this section, we consider the case complement to the hot alternative.
We shall prove an analogue result of Proposition \ref{hot pro} in this case.
The proof is based on a suitable modifications
of Moser's iteration technique \cite{moser} following the adaptation of the Kuusi's technique \cite{Kuusi} to doubly degenerate parabolic equations with ideas from \cite{Le}.
To start with, we prove the following higher integrability result which plays a crucial role in the proof of the uniform estimates.
\begin{lemma} \label{moser integral} Let $u$ be a positive, weak supersolution in an open set which compactly contains
$K_1\times(0,1]$. There exists a number $\nu(p,N)$ depending only upon $p$ and $N$, with $0<\nu(p,N)<1$ such that for all $q$ in the range
$$p+m-3<q<p+m-2+\nu(p,N)$$
and all $s$ given by
$$s=p+m-3+\left(1+\nu(p,N)\right)^{-(n+1)}(q-(p+m-3)),\quad n=1,2\cdots$$
the following holds: if there exists a constant $\tilde C$ depending only upon $m$, $p$, $N$, $C_0$ and $C_1$, such that
$$\int_0^1\int_{K_1}u^sdxdt\leq \tilde C$$
then
\begin{equation}\label{C}\int_0^{a^p}\int_{K_a}u^qdxdt\leq C\end{equation}
for all $a\in(\frac{1}{2},\frac{8}{9})$ and the constant $C$ depending only upon $m$, $p$, $N$, $C_0$, $C_1$, $s$ and $q$.
\end{lemma}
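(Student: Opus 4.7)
\medskip

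\textbf{Proof plan.} The argument is a Moser iteration that boosts $L^s$-integrability of $u$ up to $L^q$-integrability in $n+1$ steps. Each step combines the Caccioppoli inequality \eqref{moser2} with the appropriate parabolic Sobolev inequality from Proposition \ref{m<1Sobolev}, the choice being dictated by the relative size of $p$ and $N$. The constant $\nu(p,N)$ will be defined as the per-step gain factor of the Sobolev inequality (minus one), so that after $n+1$ steps the iteration lands exactly on the exponent $q$.

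\textbf{Step 1 (Exponent ladder).} Set $T=q-(p+m-3)$, and for $k=0,\ldots,n+1$ define
\begin{equation*}
\tau_k=T(1+\nu)^{k-(n+1)},\quad s_k=(p+m-3)+\tau_k,\quad \epsilon_k=s_k-(p+m-2)=\tau_k-1,\quad \beta_k=s_k/p.
\end{equation*}
Then $s_0=s$, $s_{n+1}=q$, and because $\tau_n=T/(1+\nu)<1$ by the assumption $q<p+m-2+\nu$, the full sequence $\epsilon_0,\ldots,\epsilon_n$ lies in $(-1,0)$. With $w_k=u^{\beta_k}$ one has $|Dw_k|^p=\beta_k^p u^{m+\epsilon_k-2}|Du|^p$, which is exactly the gradient quantity controlled by \eqref{moser2}.

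\textbf{Step 2 (Nested cutoffs).} Choose radii $\tfrac89>a_0>a_1>\cdots>a_{n+1}=a$, spaced so that $a_k-a_{k+1}\sim (\tfrac89-a)/(n+1)$, and set $Q_k=K_{a_k}\times(0,a_k^p]$. For each $k$ fix a pair of nested smooth cutoffs $\zeta_k\prec\tilde\zeta_k$ supported in $Q_k$, with $\zeta_k=1$ on $Q_{k+1}$ and $\tilde\zeta_k=1$ on $\mathrm{supp}\,\zeta_k$, both vanishing on the top face and on the lateral boundary, with $|D\cdot|,|\partial_t\cdot|$ controlled by the reciprocals of the geometric spacings.

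\textbf{Step 3 (Caccioppoli step).} Apply \eqref{moser2} with $\epsilon=\epsilon_k$ and $\varphi=\tilde\zeta_k$. Using $1+\epsilon_k=\tau_k<s_k$ and the inductive hypothesis $\iint_{Q_k}u^{s_k}\leq M_k$, Hölder's inequality bounds the entire right-hand side by a multiple of $M_k$. This yields
\begin{equation*}
\esssup_{0<t\leq a_k^p}\int_{K_{a_k}} w_k^{r_k}\,\tilde\zeta_k^{p}\,dx+\iint_{Q_k}|Dw_k|^p\tilde\zeta_k^p\,dz\le C_k M_k,
\end{equation*}
where $r_k=p(1+\epsilon_k)/s_k=p\tau_k/s_k$ (possibly $<1$, which is why Proposition \ref{m<1Sobolev} and not just Proposition \ref{parabolic sobolev} is required).

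\textbf{Step 4 (Sobolev step).} After rescaling $Q_k$ to the standard $Q_1$, apply to $v=w_k\zeta_k$ the appropriate inequality among \eqref{p<N r<1}, \eqref{p=N r<1}, \eqref{p>N r<1}. The nested-cutoff comparison $\zeta_k^{r_k}\le\tilde\zeta_k^{p}$ (valid since $\zeta_k\le 1$ and $\tilde\zeta_k\equiv 1$ on $\mathrm{supp}\,\zeta_k$) lets the sup-term from Sobolev be controlled by the sup-term from Step 3, while $|D(w_k\zeta_k)|^p\lesssim|Dw_k|^p\tilde\zeta_k^p+w_k^p|D\zeta_k|^p$ together with $w_k^p=u^{s_k}$ reduces the gradient term to Caccioppoli output plus $\iint_{Q_k}u^{s_k}|D\zeta_k|^p$. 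Translating back through $w_k=u^{s_k/p}$ yields
\begin{equation*}
\iint_{Q_{k+1}}u^{s_{k+1}}\,dz\le M_{k+1},\qquad s_{k+1}=s_k+\tfrac{p\tau_k}{N}\text{ (case }p<N\text{)},
\end{equation*}
with analogous formulas $s_{k+1}=s_k+\tfrac{\tau_k}{2}$ when $p=N$, and $s_{k+1}=s_k+\tfrac{\tau_k}{\bar q}$ for any fixed $\bar q>1$ when $p>N$. In each case the multiplicative increase of $\tau$ is $1+p/N$, $3/2$, or $1+1/\bar q$ respectively; choose $\nu(p,N)$ to be this multiplier minus one (and, when $p>N$, pick $\bar q$ large enough to force $\nu<1$).

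\textbf{Step 5 (Closing the iteration).} With this choice of $\nu$, the relation $\tau_{k+1}=(1+\nu)\tau_k$ is realized at each step, so $M_{n+1}$ is a finite constant depending only on the data, on $s,q$ (via the number $n+1$ of iterations and the constants $C_k$, which blow up only as $|\epsilon_k|\to 0$, i.e. as $\tau_k\to 1$), and on the geometric spacings. This gives the desired bound $\iint_{K_a\times(0,a^p)}u^q\,dz\le C$.

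\textbf{Main obstacle.} The delicate point is keeping every intermediate exponent $\epsilon_k$ strictly inside $(-1,0)$ while still reaching a final $q$ that may exceed $p+m-2$; this forces the geometric ratio $1+\nu$ to be exactly matched to the Sobolev gain and is the reason for the precise arithmetic defining $s$ in the statement. The secondary technical nuisance is that $r_k$ at the $\esssup$-level can be smaller than $1$, so the proof cannot rely on Proposition \ref{parabolic sobolev} alone and genuinely requires the extended range $r>0$ provided by Proposition \ref{m<1Sobolev}, together with the nested-cutoff comparison used in Step 4.
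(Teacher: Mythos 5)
Your proposal follows the same Moser-iteration strategy as the paper, with the identical exponent ladder $s_k = p+m-2+\epsilon_k$, $\epsilon_k+1 = (1+\nu)^{k-n-1}(q-(p+m-3))$, the same choice $\nu(p,N)\in\{p/N,\,1/2,\,N/p\}$ arising from Proposition \ref{m<1Sobolev} in the three cases $p<N$, $p=N$, $p>N$, and the same closing observation that the per-step constants $|\epsilon_k|^{-p}(1+\epsilon_k)^{-p}$ remain bounded uniformly in $k$ by a quantity depending only on $s$ and $q$. The only cosmetic differences are that you use a pair of nested cutoffs $\zeta_k\prec\tilde\zeta_k$ with uniform radial spacing starting at $8/9$, whereas the paper uses a single cutoff $\varphi_j$ raised to the power $\beta/p$ with $\beta=p\sigma/(1+\epsilon)$ (so that $\varphi_j^{\beta(\kappa-1)N/p}=\varphi_j^p$ makes the Sobolev sup-term coincide exactly with the Caccioppoli output) and geometrically decreasing radii starting at $\rho_0=1$; these choices lead to the same conclusion.
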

\begin{proof}
Throughout the proof, we set
$$\rho_0=1,\quad\rho_j=1-(1-a)\frac{1-2^{-j}}{1-2^{-n-1}},\quad K_j=K_{\rho_j}\quad\mathrm{and}\quad Q_j=K_{\rho_j}\times(0,\rho_j^p]$$
where $j=0,1,\cdots,n+1$. Moreover, we choose non-negative, smooth functions $\varphi_j$ with support lying in $Q_j$,
such that
$$\varphi_j=1\quad\mathrm{on}\quad Q_{j+1},\quad\varphi_j=0\quad\mathrm{on}\quad\partial Q_j\backslash \left(K_j\times\{0\}\right),$$
$$|D\varphi_j|\leq \frac{2^{j+1}}{1-a}\quad\mathrm{and}\quad0\leq-\frac{\partial \varphi_j}{\partial t}\leq \frac{2^{p(j+1)}}{(1-a)^p},\quad\mathrm{where}\quad j=0,1,\cdots,n.$$
Let $\epsilon\in(-1,0)$ be as in Proposition \ref{C2}. At this stage, we want to apply parabolic Sobolev embedding estimates from Proposition \ref{m<1Sobolev}.

In the case $p<N$, set
$$\sigma=p+m-2+\epsilon,\quad\kappa=1+\frac{p(1+\epsilon)}{N(p+m-2+\epsilon)},\quad\beta=\frac{p(p+m-2+\epsilon)}{1+\epsilon}.$$
We point out that $(\kappa-1)N$ may less than one provided $p$ is large and we cannot use the inequality \eqref{Sobolev formula} for $r=(\kappa-1)N$.
Instead, we use parabolic Sobolev inequality \eqref{p<N r<1} to obtain
\begin{equation}\begin{split}\label{sobolev cold1}\iint_{Q_{j+1}}u^{\kappa\sigma}dz&\leq\iint_{Q_j}(u^{\frac{\sigma}{p}}\varphi_j^{\frac{\beta}{p}})^{\kappa p}dz\\&\leq \gamma\iint_{Q_j}|D(u^{\frac{\sigma}{p}}\varphi_j^{\frac{\beta}{p}})|^pdz\left(\esssup_{0<t<\rho_j^p}\int_{K_j}(u^{\frac{\sigma}{p}}\varphi_j^{\frac{\beta}{p}})
^{(\kappa-1)N}dx\right)^{\frac{p}{N}}\\&=\gamma\iint_{Q_j}|D(u^{\frac{\sigma}{p}}\varphi_j^{\frac{\beta}{p}})|^pdz\left(\esssup_{0<t<\rho_j^p}
\int_{K_j}u^{1+\epsilon}\varphi_j^pdx\right)^{\frac{p}{N}}.\end{split}\end{equation}

In the case $p>N$, set
$$\sigma=p+m-2+\epsilon,\quad\kappa=1+\frac{N(1+\epsilon)}{p(p+m-2+\epsilon)},\quad\beta=\frac{p(p+m-2+\epsilon)}{1+\epsilon}.$$
Using the parabolic Sobolev inequality \eqref{p>N r<1} with
$$v=u^{\frac{\sigma}{p}}\varphi_j^{\frac{\beta}{p}},\quad q=\frac{p}{N}\quad\mathrm{and}\quad r=\frac{p(1+\epsilon)}{p+m-2+\epsilon},$$
we conclude that
\begin{equation}\begin{split}\label{sobolev cold2}\iint_{Q_{j+1}}u^{\kappa\sigma}dz&\leq\iint_{Q_j}(u^{\frac{\sigma}{p}}\varphi_j^{\frac{\beta}{p}})^{p+\frac{r}{q}}dz\\&\leq \gamma\iint_{Q_j}|D(u^{\frac{\sigma}{p}}\varphi_j^{\frac{\beta}{p}})|^pdz\left(\esssup_{0<t<\rho_j^p}\int_{K_j}(u^{\frac{\sigma}{p}}\varphi_j^{\frac{\beta}{p}})
^{\frac{p(1+\epsilon)}{p+m-2+\epsilon}}dx\right)^{\frac{N}{p}}\\&=\gamma\iint_{Q_j}|D(u^{\frac{\sigma}{p}}\varphi_j^{\frac{\beta}{p}})|^pdz\left(\esssup_{0<t<\rho_j^p}
\int_{K_j}u^{1+\epsilon}\varphi_j^pdx\right)^{\frac{N}{p}}.\end{split}\end{equation}

In the case $p=N$, set
$$\sigma=N+m-2+\epsilon,\quad\kappa=1+\frac{1+\epsilon}{2(N+m-2+\epsilon)},\quad\beta=\frac{N(N+m-2+\epsilon)}{1+\epsilon}.$$
Applying the parabolic embedding estimate \eqref{p=N r<1} with
$$v=u^{\frac{\sigma}{N}}\varphi_j^{\frac{\beta}{N}}\quad\mathrm{and}\quad r=\frac{N(1+\epsilon)}{N+m-2+\epsilon},$$
we deduce that
\begin{equation}\begin{split}\label{sobolev cold3}\iint_{Q_{j+1}}u^{\kappa\sigma}dz&\leq\iint_{Q_j}(u^{\frac{\sigma}{N}}\varphi_j^{\frac{\beta}{N}})^{N+\frac{r}{2}}dz\\&\leq \gamma\iint_{Q_j}|D(u^{\frac{\sigma}{N}}\varphi_j^{\frac{\beta}{N}})|^pdz\left(\esssup_{0<t<\rho_j^N}\int_{K_j}(u^{\frac{\sigma}{N}}\varphi_j^{\frac{\beta}{N}})
^{\frac{N(1+\epsilon)}{N+m-2+\epsilon}}dx\right)^{\frac{1}{2}}\\&=\gamma\iint_{Q_j}|D(u^{\frac{\sigma}{N}}\varphi_j^{\frac{\beta}{N}})|^pdz\left(\esssup_{0<t<\rho_j^N}
\int_{K_j}u^{1+\epsilon}\varphi_j^Ndx\right)^{\frac{1}{2}}.\end{split}\end{equation}
Combining estimates \eqref{sobolev cold1}-\eqref{sobolev cold3} and set \begin{equation}\label{nupN}\nu(p,N)=\begin{cases}\ \frac{p}{N},&p<N,\\ \ \frac{1}{2},&p=N,\\ \ \frac{N}{p},&p>N,\end{cases}\end{equation}
we conclude that in either case, with
$$\sigma=p+m-2+\epsilon,\quad\kappa=1+\nu(p,N)\frac{1+\epsilon}{p+m-2+\epsilon},\quad\beta=\frac{p(p+m-2+\epsilon)}{1+\epsilon},$$
there holds
\begin{equation}\begin{split}\label{sobolev cold}\iint_{Q_{j+1}}&u^{\kappa\sigma}dz\leq \gamma\iint_{Q_j}|D(u^{\frac{\sigma}{p}}\varphi_j^{\frac{\beta}{p}})|^pdz\left(\esssup_{0<t<\rho_j^p}
\int_{K_j}u^{1+\epsilon}\varphi_j^pdx\right)^{\nu(p,N)}.\end{split}\end{equation}
We now apply Caccioppoli estimate \eqref{moser2} to obtain
\begin{equation*}\begin{split}\esssup_{0<t<\rho_j^p}&\int_{K_j}u^{1+\epsilon}\varphi_j^pdx\leq
\frac{\gamma}{|\epsilon|^p(1+\epsilon)^p}\left(\iint_{Q_j}u^{m+p+\epsilon-2}|D\varphi_j|^pdz+\iint_{Q_j}u^{1+\epsilon}\left|\frac{\partial \varphi_j}{\partial t}\right|dz\right)\end{split}\end{equation*}
and
\begin{equation*}\begin{split}\iint_{Q_j}&|D(u^{\frac{\sigma}{p}}\varphi_j^{\frac{\beta}{p}})|^pdz\\&\leq\gamma\left(\iint_{Q_j}u^{m-2+\epsilon}|Du|^p\varphi_j^pdz
+\frac{1}{(1+\epsilon)^p}\iint_{Q_j}u^{m+p+\epsilon-2}|D\varphi_j|^pdz\right)\\&\leq
\frac{\gamma}{|\epsilon|^p(1+\epsilon)^p}\left(\iint_{Q_j}u^{m+p+\epsilon-2}|D\varphi_j|^pdz+\iint_{Q_j}u^{1+\epsilon}\left|\frac{\partial \varphi_j}{\partial t}\right|dz\right).\end{split}\end{equation*}
Then we conclude that
\begin{equation*}\begin{split}\iint_{Q_{j+1}}&u^{p+m-2+\epsilon+(1+\epsilon)\nu(p,N)}dz
\\&\leq \left[\frac{\gamma}{|\epsilon|^p(1+\epsilon)^p}\left(\iint_{Q_j}u^{m+p+\epsilon-2}|D\varphi_j|^pdz+\iint_{Q_j}u^{1+\epsilon}\left|\frac{\partial \varphi_j}{\partial t}\right|dz\right)\right]^{1+\nu(p,N)}\\&\leq \left[\frac{\gamma}{|\epsilon|^p(1+\epsilon)^p}\frac{2^{p(j+1)}}{(1-a)^p}\left(\iint_{Q_j}u^{p+m+\epsilon-2}dz+\iint_{Q_j}u^{1+\epsilon}dz
\right)\right]^{1+\nu(p,N)}.\end{split}\end{equation*}
Furthermore, we obtain from Young's inequality that
\begin{equation*}\begin{split}u^{1+\epsilon}&\leq u^{p+m+\epsilon-2}+\tfrac{p+m-3}{p+m+\epsilon-2}\left(\tfrac{p+m+\epsilon-2}{1+\epsilon}\right)^{-\frac{1+\epsilon}{p+m-3}}
\\&\leq u^{p+m+\epsilon-2}+\left(\tfrac{p+m-3}{1+\epsilon}\right)^{-\frac{1+\epsilon}{p+m-3}}
\\&\leq u^{p+m+\epsilon-2}+\max\left\{1,e^{-\frac{1}{p+m-3}\ln(p+m-3)}\right\},\end{split}\end{equation*}
which implies
\begin{equation*}\begin{split}\iint_{Q_{j+1}}&u^{p+m-2+\epsilon+(1+\epsilon)\nu(p,N)}dz
\leq \left[\frac{\gamma}{|\epsilon|^p(1+\epsilon)^p}\frac{2^{p(j+1)}}{(1-a)^p}\left(\iint_{Q_j}u^{p+m+\epsilon-2}dz+1
\right)\right]^{1+\nu(p,N)}.\end{split}\end{equation*}
To proceed further, we set
$$\bar h=1+\nu(p,N),\quad\epsilon_j=\bar h^j(\epsilon_0+1)-1,\quad \sigma_j=p+m-2+\epsilon_j,$$
where $-1<\epsilon_0<-1+h^{-n}$.
These choices imply the estimate
\begin{equation*}\begin{split}\iint_{Q_{j+1}}&u^{\sigma_{j+1}}dz
\leq \left[\frac{\gamma}{|\epsilon_j|^p(1+\epsilon_j)^p}\frac{2^{p(j+1)}}{(1-a)^p}\left(\iint_{Q_j}u^{\sigma_j}dz+1
\right)\right]^{\bar h}\end{split}\end{equation*}
for $j=0,1,\cdots,n$. Choosing $\epsilon_0=-1+\bar h^{-1-n}(q-(p+m-3))$, we see that
$$\frac{1}{|\epsilon_j|^p(1+\epsilon_j)^p}\leq \frac{1}{|\epsilon_n|^p(1+\epsilon_0)^p}=:\bar\gamma(s,p,q,N)$$
and we finally have
\begin{equation*}\begin{split}\iint_{Q_{j+1}}&u^{\sigma_{j+1}}dz
\leq \left(\frac{\gamma\bar \gamma2^{p(j+1)}}{(1-a)^p}\iint_{Q_j}u^{\sigma_j}dz
\right)^{\bar h}+\left(\frac{\gamma\bar\gamma2^{pn}}{(1-a)^p}\right)^{\bar h}.\end{split}\end{equation*}
Since $\sigma_0=s$ and $\sigma_{n+1}=q$, then the estimate \eqref{C} follows by induction. The proof is completed.
\end{proof}
\begin{lemma}\label{uniform bound} Let $u$ be as in Lemma \ref{moser integral} and let $d$ be as in Lemma \ref{refined}. We assume that
\begin{equation}\label{cold assumption}|[u(\cdot,t)>k^{1+\frac{1}{d}}\cap K_1]|\leq k^{-\frac{1}{d}}|K_1|\end{equation}
for all $t\in (0,1]$ and all level $k\geq1$. Let $\nu(p,N)$ be as in \eqref{nupN}. Then for all $q$ with
\begin{equation}\label{range}p+m-3<q<p+m-2+\nu(p,N)\end{equation}
there exists a constant $\gamma$ depending only upon $m$, $p$, $N$, $C_0$, $C_1$ and $q$ such that
\begin{equation}\label{uuniform}\int_0^{\left(\frac{3}{4}\right)^p}\int_{K_{\frac{3}{4}}}u^qdxdt\leq \gamma.\end{equation}
\end{lemma}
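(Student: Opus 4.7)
The plan is to feed the distributional control provided by the cold assumption \eqref{cold assumption} into the Moser-type iteration of Lemma \ref{moser integral}. The workflow has two steps: first, convert \eqref{cold assumption} into an integral bound of the form $\iint u^{s}\,dx\,dt\leq\tilde C$ on $K_1\times(0,1)$ for some exponent $s$ slightly greater than $p+m-3$; second, invoke Lemma \ref{moser integral} with this $s$, the corresponding choice of $n$, and $a=\frac{3}{4}$ to obtain \eqref{uuniform} directly.

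For the first step, I would substitute $\ell=k^{1+1/d}$ in \eqref{cold assumption} to recast it as the weak-type bound
\begin{equation*}
|[u(\cdot,t)>\ell]\cap K_1|\leq \ell^{-1/(d+1)}|K_1|\qquad\text{for all }\ell\geq 1,\ t\in(0,1],
\end{equation*}
since $k=\ell^{d/(d+1)}$ gives $k^{-1/d}=\ell^{-1/(d+1)}$. A layer-cake argument, splitting $\int_{K_1}u^{s_0}dx=s_0\int_0^\infty \ell^{s_0-1}|[u>\ell]\cap K_1|\,d\ell$ at $\ell=1$, bounds the low range by $|K_1|$ and the tail $\int_1^\infty \ell^{s_0-1-1/(d+1)}d\ell$ converges precisely when $s_0<\frac{1}{d+1}$. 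This yields $\int_{K_1}u(\cdot,t)^{s_0}dx\leq \gamma(s_0,d)|K_1|$ uniformly in $t\in(0,1]$, and integrating in $t$ gives $\iint_{K_1\times(0,1)}u^{s_0}\,dx\,dt\leq \tilde C$.

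For the second step I pick $s\in(p+m-3,\frac{1}{d+1})$ close to the left endpoint, then solve the identity $s=p+m-3+(1+\nu(p,N))^{-(n+1)}(q-(p+m-3))$ for a (large) positive integer $n$; this is possible whenever $s$ is sufficiently near $p+m-3$, for every admissible $q$ as in \eqref{range}. With this choice of $(n,s)$, the hypothesis of Lemma \ref{moser integral} is exactly the bound $\iint u^s\leq\tilde C$ established in the first step, and the lemma applied with $a=\frac{3}{4}\in(\frac{1}{2},\frac{8}{9})$ outputs \eqref{uuniform}.

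The main obstacle I foresee is the compatibility of the two exponents: the layer-cake yields $L^{s_0}$ integrability only for $s_0<\frac{1}{d+1}$, whereas Lemma \ref{moser integral} requires its starting exponent to exceed the critical value $p+m-3$. These two ranges overlap if and only if $p+m-3<\frac{1}{d+1}$, which is not stated as an explicit hypothesis. If the constant $d$ furnished by Lemma \ref{refined} does not automatically satisfy this inequality, an intermediate bootstrap step would be needed: using the Caccioppoli estimate \eqref{moser2} of Proposition \ref{C3} with $\epsilon$ just above $-1$, combined with a slice-wise parabolic Sobolev inequality from Proposition \ref{m<1Sobolev} (which is precisely admissible for arbitrarily small exponents $r>0$), one should amplify the weak integrability $s_0<\frac{1}{d+1}$ up past the threshold $p+m-3$ before invoking Lemma \ref{moser integral}. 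Tracking the exponents through this potential intermediate amplification is where the delicate part of the proof would lie.
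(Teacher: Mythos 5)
Your overall plan is the right one, and you have correctly anticipated the crucial obstacle: the layer-cake argument turns \eqref{cold assumption} into $\int_{K_1}u(\cdot,t)^{s_0}dx\leq\gamma|K_1|$ uniformly in $t$ only for $s_0<\tfrac{1}{d+1}$, and since $d$ coming from Lemma \ref{refined} is in general large, this range does not reach the threshold $p+m-3$ needed to invoke Lemma \ref{moser integral} directly. The bootstrap you foresee is in fact the entire content of the paper's proof, and you have identified the right ingredients, so you should not treat it as a footnote.

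Where your execution is slightly off course and would need repair: after the layer-cake you integrate in $t$ and aim for a spacetime bound $\iint u^{s_0}\leq\tilde C$. This throws away the uniformity in $t$, which is precisely what the paper exploits. The paper fixes $\delta=\tfrac{1}{2(d+1)}$ and keeps the \emph{slicewise} bound $\esssup_{t\in(0,1)}\int_{K_1}u^\delta(\cdot,t)\,dx\leq\tfrac{2}{\delta}|K_1|$. That $\esssup$ is exactly the $L^\infty_t L^r_x$ factor appearing in the parabolic Sobolev inequality of Proposition \ref{m<1Sobolev} (with the small exponent $r$ it allows), so feeding it together with the Caccioppoli estimate \eqref{moser1} (not \eqref{moser2}; the cutoff $\varphi=\varphi(x)$ is time-independent, so the version with the boundary terms at $t_1,t_2$ is the one that applies, the favourable signs of those terms for $\epsilon=\delta-1\in(-1,0)$ being what makes this work) produces, after a Young absorption and the standard iteration lemma \cite[Preliminaries Lemma 5.2]{DGV} over nested cubes $K_s\subset K_r$ with $\tfrac{7}{8}<s<r<1$, the uniform bound $\iint_{K_{7/8}\times(0,(7/8)^p]}u^{p+m-3+(1+\nu(p,N))\delta}\,dz\leq\gamma$. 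This exponent now sits strictly above $p+m-3$, and only then does Lemma \ref{moser integral} (choosing $n$ large so that the corresponding $s$ falls at or below $p+m-3+(1+\nu)\delta$, and Hölder if needed) yield \eqref{uuniform} on $K_{3/4}\times(0,(3/4)^p]$. So: keep the slicewise bound instead of integrating it, plug it into the $\esssup$ slot of Proposition \ref{m<1Sobolev}, use \eqref{moser1} for the gradient, absorb via Young, close with the DGV iteration lemma, and then hand off to Lemma \ref{moser integral}. With these corrections your proposal matches the paper's argument.
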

\begin{proof} Set $\delta=2^{-1}(d+1)^{-1}$. We deduce from \eqref{cold assumption} that
\begin{equation}\begin{split}\label{uniform int}\int_{K_1}u^\delta(\cdot,t)dx\leq \frac{2}{\delta}|K_1|\quad \quad\mathrm{for}\quad\mathrm{a.\ e.}\quad t\in(0,1).\end{split}\end{equation}
Fix time levels $s$ and $r$ with $7/8<s<r<1$ and set $Q_s=K_s\times(0,1]$, $Q_r=K_r\times(0,1]$. Let $\varphi=\varphi(x)$ be a non-negative smooth function in $K_r$ such that $0\leq\varphi\leq1$, $\varphi=1$ in $K_s$ and $|D\varphi|\leq (r-s)^{-1}$. We now apply Proposition \ref{m<1Sobolev} and repeat the argument \eqref{sobolev cold1}-\eqref{sobolev cold} in the proof of Lemma \ref{moser integral} with $\epsilon$ replaced by $\delta-1$,
$$\sigma=p+m-3+\delta,\quad\kappa=1+\nu(p,N)\frac{\delta}{p+m-3+\delta},\quad\beta=\frac{p(p+m-3+\delta)}{\delta},$$
for the three cases ($p<N$, $p>N$, $p=N$) respectively. Then we conclude that in either case,
\begin{equation*}\begin{split}\iint_{Q_s}u^{p+m-3+(1+\nu(p,N))\delta}dz&\leq\iint_{Q_r}(u^{\frac{\sigma}{p}}\varphi_j^{\frac{\beta}{p}})^{p+\frac{\nu(p,N)\delta p}{\sigma}}dz\\&\leq\gamma\iint_{Q_r}|D(u^{\frac{\sigma}{p}}\varphi_j^{\frac{\beta}{p}})|^pdz\left(\esssup_{0<t<1}
\int_{K_1}u^{\delta}\varphi_j^pdx\right)^{\nu(p,N)}\\&\leq \gamma\iint_{Q_r}|D(u^{\frac{\sigma}{p}}\varphi_j^{\frac{\beta}{p}})|^pdz.\end{split}\end{equation*}
We are now in a position to use Caccioppoli estimate \eqref{moser1} with $\epsilon=\delta-1$,
\begin{equation*}\begin{split}\iint_{Q_r}|D(u^{\frac{\sigma}{p}}\varphi_j^{\frac{\beta}{p}})|^pdz&\leq\gamma\left(\esssup_{0<t<1}\int_{K_1}u^\delta(\cdot,t)dx
+\iint_{Q_r}u^{m+p-3+\delta}|D\varphi|^pdz\right)\\&\leq\gamma\left(1
+\iint_{Q_r}u^{m+p-3+\delta}|D\varphi|^pdz\right).\end{split}\end{equation*}
To estimate the right hand side from above, we use Young's inequality to obtain
\begin{equation*}\begin{split}\iint_{Q_r}&u^{m+p-3+\delta}|D\varphi|^pdz\leq \frac{1}{2\gamma}\iint_{Q_r}u^{m+p-3+\delta(1+\nu(p,N))}dz
+\gamma\left(\frac{1}{r-s}\right)^{\frac{N}{\delta}\left[p+m-3+\delta(1+\nu(p,N))\right]}.\end{split}\end{equation*}
Then we conclude that
\begin{equation*}\begin{split}\iint_{Q_s}&u^{p+m-3+(1+\nu(p,N))\delta}dz\leq \frac{1}{2}\iint_{Q_r}u^{m+p-3+\delta(1+\nu(p,N))}dz
+\gamma\left(\frac{1}{r-s}\right)^{\frac{N}{\delta}\left[p+m-3+\delta(1+\nu(p,N))\right]}.\end{split}\end{equation*}
At this stage, we deduce from \cite[Preliminaries Lemma 5.2]{DGV} that
$$\int_0^{(\frac{7}{8})^p}\int_{K_{\frac{7}{8}}}u^{p+m-3+(1+\nu(p,N))\delta}dxdt\leq\gamma,$$
and the estimate \eqref{uuniform} follows immediately from Lemma \ref{moser integral}. This completes the proof of Lemma \ref{uniform bound}.
\end{proof}
\begin{lemma}\label{uniform gradient} Let $u$ be as in Lemma \ref{uniform bound}. Then there exists a constant $\tilde \gamma$ depending only upon $m$, $p$, $N$, $C_0$ and $C_1$ such that
\begin{equation}\label{gradient uniform} \int_0^{\left(\frac{5}{8}\right)^p}\int_{K_{\frac{5}{8}}}u^{m-1}|Du|^{p-1}dxdt\leq \tilde \gamma.\end{equation}
\end{lemma}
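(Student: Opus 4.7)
\medskip

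\noindent\textbf{Proof plan for Lemma \ref{uniform gradient}.}

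The plan is to combine H\"older's inequality with the Caccioppoli estimate \eqref{moser2} from Proposition \ref{C3} and the uniform integrability bound \eqref{uuniform} from Lemma \ref{uniform bound}. The idea is that for a suitable negative $\epsilon$, we have an interpolation identity
\begin{equation*}
u^{m-1}|Du|^{p-1}=\bigl(u^{m+\epsilon-2}|Du|^{p}\bigr)^{\frac{p-1}{p}}\cdot u^{\,\beta(\epsilon)},\qquad \beta(\epsilon)=\frac{m+p-2-\epsilon(p-1)}{p},
\end{equation*}
where the exponent $\beta(\epsilon)\cdot p = m+p-2-\epsilon(p-1)$ of $u$ in the second factor can be made to lie in the admissible range $(p+m-3,\,p+m-2+\nu(p,N))$ of Lemma \ref{uniform bound} by choosing $\epsilon$ small in absolute value.

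First I would fix $\epsilon\in(-1,0)$ satisfying
\begin{equation*}
-\frac{\nu(p,N)}{p-1}<\epsilon<0,
\end{equation*}
so that both $m+p-2-\epsilon(p-1)$ and $m+p+\epsilon-2$ lie strictly inside $(p+m-3,\,p+m-2+\nu(p,N))$. Applying H\"older's inequality on $K_{5/8}\times(0,(5/8)^p]$ with exponents $p/(p-1)$ and $p$ gives
\begin{equation*}
\iint_{K_{5/8}\times(0,(5/8)^p]}u^{m-1}|Du|^{p-1}\,dz\le \Bigl(\iint u^{m+\epsilon-2}|Du|^{p}\,dz\Bigr)^{\frac{p-1}{p}}\Bigl(\iint u^{m+p-2-\epsilon(p-1)}\,dz\Bigr)^{\frac{1}{p}}.
\end{equation*}
The second factor is already bounded by Lemma \ref{uniform bound}, since $m+p-2-\epsilon(p-1)$ lies in the allowed range.

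For the first factor I would choose a product cut-off $\varphi(x,t)=\phi(x)\psi(t)$ with $\phi\in C_{0}^{\infty}(K_{3/4})$, $\phi\equiv 1$ on $K_{5/8}$, $|D\phi|\le \gamma$, and $\psi\in C^{\infty}([0,(3/4)^{p}])$, $\psi\equiv 1$ on $[0,(5/8)^{p}]$, $\psi((3/4)^{p})=0$, $\psi'\le 0$. Because $\psi'\le 0$ and $\epsilon(1+\epsilon)<0$, the positive part $\bigl(\tfrac{1}{\epsilon(1+\epsilon)}\partial_{t}\varphi^{p}\bigr)_{+}$ is controlled by $|\psi'|$. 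Since $\varphi=0$ on $K_{3/4}\times\{(3/4)^{p}\}$, the second Caccioppoli estimate \eqref{moser2} applies and yields
\begin{equation*}
\iint u^{m+\epsilon-2}|Du|^{p}\varphi^{p}\,dz\le \gamma(\epsilon)\Bigl(\iint u^{m+p+\epsilon-2}|D\varphi|^{p}\,dz+\iint u^{1+\epsilon}\bigl|\partial_{t}\varphi^{p}\bigr|\,dz\Bigr).
\end{equation*}
The exponent $m+p+\epsilon-2$ lies in $(p+m-3,\,p+m-2+\nu(p,N))$ by our choice of $\epsilon$, so Lemma \ref{uniform bound} bounds the first integral on the right. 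For the term with $u^{1+\epsilon}$, I would note that $1+\epsilon\in(0,1)$ and $p+m-3>0$ (since $p>2$ and $m+p>3$), so the elementary inequality $u^{1+\epsilon}\le 1+u^{p+m-3+\delta}$ for some small $\delta$ with $p+m-3+\delta<p+m-2+\nu(p,N)$, together with the bounded measure of the cylinder, reduces this to another instance of Lemma \ref{uniform bound}.

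Putting these bounds together and absorbing all constants (which depend only on $m,p,N,C_{0},C_{1}$ through the fixed choice of $\epsilon$) into $\tilde\gamma$ gives \eqref{gradient uniform}. The main technical point to watch is the compatibility of exponents: one has to pick $\epsilon$ simultaneously in $(-\nu(p,N)/(p-1),0)$ \emph{and} inside $(-1,0)$ so that every integrand $u^{\bullet}$ produced by H\"older and by the right-hand side of \eqref{moser2} falls within the admissible exponent window of Lemma \ref{uniform bound}; the computation above shows this window is nonempty because $\nu(p,N)\in(0,1]$ and $p>2$.
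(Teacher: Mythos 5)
Your proof is correct and follows essentially the same route as the paper: a H\"older split of $u^{m-1}|Du|^{p-1}$ into a $\frac{p-1}{p}$-power of $u^{m+\epsilon-2}|Du|^p$ and a $\frac{1}{p}$-power of $u^{m+p-2-\epsilon(p-1)}$, with the first factor controlled by Caccioppoli estimate \eqref{moser2} for small negative $\epsilon$ and all resulting $u$-powers lying in the admissible window of Lemma~\ref{uniform bound}. The paper simply makes the concrete choice $\delta=-\epsilon=\nu(p,N)/(4(p-1))$, but the exponent bookkeeping and the treatment of the $u^{1+\epsilon}$ remainder are the same as yours.
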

\begin{proof} By H\"older's inequality
\begin{equation*}\begin{split}\iint_{Q_{\frac{5}{8}}}u^{m-1}|Du|^{p-1}dz
&=\iint_{Q_{\frac{5}{8}}}(u^{m-1}u^{-\frac{(p-1)(m-\delta-2)}{p}})(u^{\frac{(p-1)(m-\delta-2)}{p}}|Du|^{p-1})dz\\&\leq \left(\iint_{Q_{\frac{5}{8}}} u^{p+m-\delta-2+\delta p}dz\right)^{\frac{1}{p}}
\left(\iint_{Q_{\frac{5}{8}}} u^{m-\delta-2}|Du|^{p}dz\right)^{\frac{p-1}{p}},\end{split}\end{equation*}
where
$\delta=[4(p-1)]^{-1}\nu(p,N)<1.$  By Lemma \ref{uniform bound}, we see that
$$\iint_{Q_{\frac{5}{8}}} u^{p+m-\delta-2+\delta p}dz\leq \gamma.$$
Moreover, we choose a test function
$\varphi\in C^\infty (Q_{\frac{7}{8}})$, such that $0\leq\varphi\leq1$,
$$\varphi=0\quad\mathrm{on}\quad\partial Q_{\frac{7}{8}}\backslash K_{\frac{7}{8}}\times\left\{0\right\},\quad\varphi=1\quad\mathrm{on}\quad Q_{\frac{5}{8}}\quad\mathrm{and}\quad|D\varphi|+\left(-\frac{\partial \varphi}{\partial t}\right)_+\leq \gamma.$$
Applying Caccioppoli estimate \eqref{moser2}, with $\epsilon=-\delta$ and the test function $\varphi$ as defined above, the integral involving gradient is estimated by
\begin{equation*}\begin{split}\iint_{Q_{\frac{5}{8}}} u^{m-\delta-2}|Du|^{p}dz&\leq \iint_{Q_{\frac{7}{8}}} u^{m-\delta-2}|Du|^{p}\varphi^pdz
\\&\leq \gamma\iint_{Q_{\frac{7}{8}}}u^{m+p-\delta-2}|D\varphi|^pdz+\gamma\iint_{Q_{\frac{7}{8}}}u^{1-\delta}\left(-\frac{\partial \varphi}{\partial t}\right)_+dz.\end{split}\end{equation*}
The Lemma follows by Lemma \ref{uniform bound} and H\"older's inequality. We omit the details.
\end{proof}
The following proposition is our main result in this section.
\begin{proposition}\label{cold alt pro} There exist positive constants $\theta_0$, $S_2$ and $\eta_1$, depending only upon $m$, $p$, $N$, $C_0$ and $C_1$, such that if $u$ is a positive, weak supersolution in an open set which compactly contains $K_2\times(0,T_2]$,
\begin{equation}\begin{split}\label{theta0}\int_{K_{\frac{1}{2}}}u(x,0)dx\geq 2 \theta_0,\end{split}\end{equation}
and the condition
\eqref{cold assumption} holds
for all $t\in (0,1]$ and all level $k\geq1$, then
there holds
\begin{equation}\label{result1}u\geq \eta_1\quad\mathrm{a.\ e.\ \ in}\quad K_2\times\left(1+\tfrac{1}{2}S_2,S_2\right].\end{equation}
\end{proposition}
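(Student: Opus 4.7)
My plan is to combine the Moser-type integral estimates of Lemmas~\ref{moser integral}--\ref{uniform gradient} with the refined expansion of positivity (Lemma~\ref{refined}). The integral estimates will propagate the initial integral positivity \eqref{theta0} to later times and convert it into a measure-theoretic positivity on a definite subset of $K_{5/8}$ at some $t_0\le(5/8)^p$; two successive applications of Lemma~\ref{refined} will then spread this positivity first to $K_{5/4}$ and next to $K_2$, over the required waiting interval.

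For Step~1, I would fix a time-independent cut-off $\Phi\in C_0^\infty(K_{5/8})$ with $0\le\Phi\le 1$, $\Phi\equiv 1$ on $K_{1/2}$ and $|D\Phi|\le\gamma$, and insert $\Phi$ into the equivalent weak form \eqref{weak form 22}. Using \eqref{structure def} together with the uniform gradient bound of Lemma~\ref{uniform gradient}, the error
$$\left|\iint_{K_{5/8}\times(0,t)}A(x,s,u,Du)\cdot D\Phi\,dxds\right|\le C_1\|D\Phi\|_\infty\tilde\gamma=:c_1$$
stays bounded by a data-dependent constant for every $t\le(5/8)^p$. Setting $\theta_0:=c_1$ and using \eqref{theta0} would yield $\int_{K_{5/8}}u(\cdot,t)\,dx\ge\theta_0$ throughout $(0,(5/8)^p]$. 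Next, Lemma~\ref{uniform bound} supplies some $q>1$ with $\iint_{Q_{3/4}}u^q\le\gamma$, so Chebyshev/Fubini produces a time $t_0\in(0,(5/8)^p]$ with $\int_{K_{3/4}}u^q(\cdot,t_0)\,dx$ bounded by a data-dependent constant. Splitting according to $\{u\le M_0\}\cup\{u>M_0\}$ and choosing $M_0|K_{5/8}|=\theta_0/2$, H\"older's inequality would then give $|\{u(\cdot,t_0)>M_0\}\cap K_{5/8}|\ge\mu_0|K_{5/8}|$ for $M_0$, $\mu_0$ depending only on the data.

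For Step~2, I would apply Lemma~\ref{refined} at $(y,s)=(0,t_0)$ with $\rho=5/8$, $M=M_0$ and $\mu=\mu_0$; this produces data-dependent $\eta',\tau_1>0$ such that $u\ge\eta'$ a.e.\ on $K_{5/4}\times I_1$ with $I_1=(t_0+\tfrac12\tau_1,t_0+\tau_1]$. Since at each $t_1\in I_1$ the resulting positivity on $K_{5/4}$ holds with $\mu=1$, a second application of Lemma~\ref{refined} at $(0,t_1)$ with $\rho=1$, $M=\eta'$ and $\mu=1$ yields
$$u\ge\eta_1:=\eta_0\eta'\qquad\text{a.e.\ on}\qquad K_2\times(t_1+\tfrac12\tau_2,t_1+\tau_2]$$
for another data-dependent $\tau_2$. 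Taking the union over $t_1\in I_1$ gives positivity on $K_2\times(t_0+\tfrac12(\tau_1+\tau_2),t_0+\tau_1+\tau_2]$; setting $S_2:=\tau_1+\tau_2$ and using $t_0\le(5/8)^p<1$, one verifies $(1+\tfrac12 S_2,S_2]\subset(t_0+\tfrac12(\tau_1+\tau_2),t_0+\tau_1+\tau_2]$, while shrinking $M_0$ beforehand ensures $\tau_1+\tau_2>2$ so that this target window is non-empty.

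The main obstacle will be the quantitative bookkeeping of the constants in Step~2: the intrinsic time scales $\tau_1,\tau_2$ produced by Lemma~\ref{refined} depend on $(\eta_* M)^{-(p+m-3)}$, and so must be calibrated (through the smallness of $M_0$ and $\eta_0$) both to exceed $2$ and to fully cover the prescribed window $(1+\tfrac12 S_2,S_2]$. A secondary subtlety is that Lemmas~\ref{uniform bound} and~\ref{uniform gradient} are anchored at $t=0$ on the fixed cubes $K_{3/4}$ and $K_{5/8}$, so Step~1 is necessarily confined to $t\le(5/8)^p$; the long waiting time in the conclusion then arises solely from the intrinsic scaling built into Lemma~\ref{refined}.
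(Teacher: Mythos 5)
Your proposal is correct and follows essentially the same route as the paper: control the decay of $\int_{K_{5/8}}u(\cdot,t)\,dx$ using \eqref{weak form 22} and the uniform gradient bound of Lemma~\ref{uniform gradient}, combine with the $L^q$ bound of Lemma~\ref{uniform bound} to locate a time level where the integral lower bound can be converted into measure-theoretic positivity (the paper cites \cite[Lemma 5.9]{Kuusi} for the H\"older/Chebyshev splitting you re-derive), and then invoke Lemma~\ref{refined} to propagate positivity to $K_2$ over the target window. The one place where you are actually more careful than the paper: since Lemma~\ref{refined} expands $K_\rho$ only to $K_{2\rho}$, a single application with $\rho=5/8$ reaches $K_{5/4}$ rather than $K_2$; the paper applies the lemma once and states the conclusion on $K_2$ without comment (implicitly one can re-read the density on $K_{5/8}$ as a smaller density on $K_1$, or iterate once more), whereas your two-step scheme with the sliding time-interval union handles this explicitly and is the cleaner way to fill that gap.
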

\begin{proof} In the inequality \eqref{weak form 22} we choose $\phi=\varphi(x)$ as a test function, where $\varphi\in C_0^\infty(K_{\frac{5}{8}})$,
$0\leq\varphi\leq1$, $\varphi=1$ in $K_{\frac{1}{2}}$ and $|D\varphi|\leq 8$. Then, we infer from \eqref{theta0} that
\begin{equation*}\begin{split}\label{weak form 2}
\int_{K_\frac{5}{8}}u(x,t)dx&\geq \int_{K_\frac{5}{8}}u(x,t)\varphi(x)dx\\&\geq\int_{K_\frac{5}{8}}u(x,0)\varphi(x)dx-\int^{(\frac{5}{8})^p}_{0}\int_{K_{\frac{5}{8}}}A(x,t,u,Du)\cdot D\varphi dxdt
\\&\geq
\int_{K_\frac{1}{2}}u(x,0)dx-\int^{(\frac{5}{8})^p}_{0}\int_{K_{\frac{5}{8}}}u^{m-1}|Du|^{p-1} |D\varphi| dxdt\\&\geq 2\theta_0-\tilde \gamma\geq \theta_0\end{split}\end{equation*}
for any $t\in\left(0,(\frac{5}{8})^p\right]$, provided $\theta_0\geq \tilde \gamma$. On the other hand, we use Lemma \ref{uniform bound} to obtain
$$\int^{(\frac{5}{8})^p}_{0}\int_{K_{\frac{5}{8}}}u(x,t)^qdxdt\leq \gamma$$
for some $q>1$ in the range \eqref{range}. Then, there exists a time level $t_1\in\left(0,(\frac{5}{8})^p\right]$ such that
$$\int_{K_\frac{5}{8}}u(x,t_1)dx\geq \theta_0\quad\quad\mathrm{and}\quad\quad\int_{K_{\frac{5}{8}}}u(x,t_1)^qdx\leq\gamma.$$
Hence, it follows from \cite[Lemma 5.9]{Kuusi} that
$$|[u(\cdot,t_1)\geq\tfrac{1}{2}\theta_0]\cap K_{\frac{5}{8}}|\geq \left(\tfrac{1}{2}\right)^{\frac{q}{q-1}}\gamma^{-\frac{1}{q-1}}|K_{\frac{5}{8}}|.$$
By Lemma \ref{refined}, there exist constants $\eta_0$, $\delta$ in $(0,1)$ and $C$, $\eta_*$, $d>0$ depending only upon $m$, $p$, $N$, $C_0$ and $C_1$, such that
\begin{equation*}u\geq \frac{1}{2}\eta_*\theta_0\quad\mathrm{a.\ e.}\quad\mathrm{in}\quad K_2\times\left(t_1+\frac{C^{p+m-3}}{(\eta_*\theta_0)^{p+m-3}}\frac{1}{2}\delta,t_1+\frac{C^{p+m-3}}{(\eta_*\theta_0)^{p+m-3}}\delta\right],\end{equation*}
where
$$\eta_*=\eta_0\left( \left(\tfrac{1}{2}\right)^{\frac{q}{q-1}}\gamma^{-\frac{1}{q-1}}\right)^{d+1}.$$
Therefore the estimate \eqref{result1} follows by choosing
\begin{equation}\label{T2 time}\eta_1=\tfrac{1}{2}\eta_*\theta_0\quad\mathrm{and}\quad S_2=\frac{C^{p+m-3}}{(\eta_*\theta_0)^{p+m-3}}.\end{equation}
This finishes the proof of Proposition \ref{cold alt pro}.
\end{proof}
\section{Proofs of the Main Results}
In this section we will prove the weak Harnack inequalities of positive, weak supersolutions to doubly nonlinear parabolic equations. The method of the proof has its origin in
\cite{Kuusi, DGV}. Since most of the arguments are standard by now, we will only sketch the proof and refer the read to \cite[chapter 3, section 13-14]{DGV} for the details.
\subsection{Proof of Theorem \ref{main result1}}
Let $\tilde \gamma$ be the constant defined as in \eqref{gradient uniform}. Let $(y,s)\in E_T$ be a fixed point. We set $\tilde M=\fint_{K_\rho(y)}u(x,s)dx$ and $M=(3\tilde\gamma)^{-1}\tilde M$. We
assume that $M>0$, otherwise there is nothing to prove. Moreover, we introduce the change of variables
$$x^\prime=\frac{x-y}{2\rho}, \quad t^\prime=M^{p+m-3}\frac{t-s}{\rho^p}\quad\mathrm{and}\quad v(x^\prime,t^\prime)=\frac{1}{M}u(y+2\rho x^\prime,s+M^{m+p-3}\rho^p t^\prime).$$
From \eqref{weak form deff}, it is easily seen that
\begin{equation*}\begin{split}
-\int_{-M^{m+p-3}s\rho^{-p}}^{M^{m+p-3}(T-s)\rho^{-p}}&\int_{K_8}v\frac{\partial \Phi }{\partial t^\prime}dx^\prime dt^\prime+ \int_{-M^{m+p-3}s\rho^{-p}}^{M^{m+p-3}(T-s)\rho^{-p}}\int_{K_8}\tilde A(x^\prime,t^\prime,v,D_{x^\prime}v)\cdot D_{x^\prime}\Phi\ dx^\prime dt^\prime\geq 0
\end{split}\end{equation*}
for any non-negative test function $\phi=\phi(x^\prime,t^\prime)$ with
$$\Phi\in C_0^\infty\left[K_8\times(-M^{m+p-3}s\rho^{-p},M^{m+p-3}(T-s)\rho^{-p})\right],$$
where
$$\tilde A(x^\prime,t^\prime,v,D_{x^\prime}v)=\frac{\rho^{p-1}}{M^{p+m-2}}A\left(y+2\rho x^\prime,s+M^{p+m-3}\rho^pt^\prime,Mv,\frac{M}{2\rho}D_{x^\prime}v\right).$$
From \eqref{structure def}, we deduce
\begin{equation*}
	\begin{cases}
	\tilde A(x^\prime,t^\prime,v,D_{x^\prime}v)\cdot D_{x^\prime}v\geq 2^{1-p}C_0|v|^{m-1}|Dv|^p,\\
	|\tilde A(x^\prime,t^\prime,v,D_{x^\prime}v)|\leq 2^{1-p}C_1|v|^{m-1}|Dv|^{p-1},
	\end{cases}
\end{equation*}
which implies that $v$ is a weak supersolution to the doubly degenerate parabolic equation
$$\partial_{t^\prime}v-\mathrm{div}\tilde A(x^\prime,t^\prime,v,D_{x^\prime}v)=0$$
in the cylinder $K_8\times\left(-M^{m+p-3}s\rho^{-p},M^{m+p-3}(T-s)\rho^{-p}\right)$. Furthermore, we observe that
\begin{equation}\label{MM} \fint_{K_\frac{1}{2}}v(x^\prime,0)dx^\prime=3\tilde\gamma.\end{equation}
Let $S_1$ and $S_2$ be as in \eqref{T1 time} and \eqref{T2 time} respectively. By reducing $\eta_0$, we assume, without loss of generality, that
$\min\{S_1,S_2\}\geq1$.
Furthermore, we need only consider the case
\begin{equation*}M^{p+m-3}\frac{T-s}{\rho^p}>\max\{S_1,S_2\}>1,\end{equation*}
since otherwise there is nothing to prove. We infer from \eqref{MM} that either Proposition \ref{hot pro} or Proposition \ref{cold alt pro} holds. By a suitable modification of the constants, there exist constants $\eta_*$
and $T_*$ depending only upon $m$, $p$, $N$, $C_0$ and $C_1$, such that
\begin{equation*}v\geq \eta_*\quad\mathrm{a.\ e.\ \ in}\quad K_2\times\left(1+\tfrac{1}{2}T_*,T_*\right].\end{equation*}
Transforming back to the original variables, we obtain the desired estimate \eqref{weak harnack local}. We omit the details.

\subsection{Proof of Theorem \ref{main result2}}\label{main results}
Without loss of generality,we may prove the theorem in the case that $(y,s)=(0,0)$. This is always possible by using a translation. It suffices to prove the theorem in the case that
\begin{equation}\label{uu}\fint_{K_\rho}u(x,0)dx>2c\left(\frac{\rho^p}{T^\prime}\right)^{\frac{1}{p+m-3}}.\end{equation}
We set
$v(x,t)=u\left(\delta\rho x,(\delta\rho)^pt\right),$
where $\delta>1$ is to be chosen. Then $v$ is a positive, weak supersolution in $\mathbb{R}^N\times (0,T^\prime(\delta\rho)^{-p}]$.
Let $c$ and $\gamma$ be as in Theorem \ref{main result1}.
We need only consider the case
\begin{equation}\label{vv}\fint_{K_1}v(x,0)dx>2c\left(\frac{(\delta\rho)^p}{T}\right)^{\frac{1}{p+m-3}}.\end{equation}
We now use Theorem \ref{main result1} for $v$ to obtain
\begin{equation*}\fint_{K_1}v(x,0)dx\leq c\left(\frac{(\delta\rho^p)}{T}\right)^{\frac{1}{p+m-3}}+\gamma\essinf_{Q_*}v,\end{equation*}
where
$$Q_*=K_*\times(\tfrac{1}{2}T_*,T_*)\quad\mathrm{and}\quad T_*=c^{m+p-3}\left(\frac{1}{2}\fint_{K_1}v(x,0)dx\right)^{3-m-p}<\frac{T}{(\delta p)^p}.$$
It follows from \eqref{vv} that
\begin{equation}\label{vvv}\fint_{K_1}v(x,0)dx\leq 2\gamma \essinf_{Q_*}v.\end{equation}
At this point, we choose
\begin{equation}\label{hhhhhh}\delta^p=c^{3-m-p}\frac{T^\prime}{\rho^p}\left(\frac{1}{2}\fint_{K_1}v(x,0)dx\right)^{p+m-3}.\end{equation}
Moreover, the estimate \eqref{vv} yields
$$\delta^\lambda\geq c^{3-m-p}\frac{T^\prime}{\rho^p}\left(\frac{1}{2}\fint_{K_\rho}u(x,0)dx\right)^{p+m-3},$$
where $\lambda=N(p+m-3)+p$.
This inequality together with \eqref{uu} ensures $\delta>1$. Finally, we conclude from \eqref{vv}-\eqref{hhhhhh} that
\begin{equation}\label{uuu}\fint_{K\rho}u(x,0)dx\leq 2\gamma \delta^N\inf_{Q^\prime}u \leq 2\gamma (c\gamma)^{\frac{N(p+m-3)}{p}}\left(\frac{T^\prime}{\rho^p}\right)^{\frac{N}{p}}\left(\essinf_{Q^\prime}u\right)^{\frac{N(p+m-3)}{p}}\end{equation}
where $Q^\prime=K_{4\delta\rho}\times (\tfrac{1}{2}T^\prime,T^\prime)$.
This finishes the proof, the detailed verification of \eqref{uuu} being left to the reader.
\begin{remark}\end{remark}
It would be interesting to establish similar results of Theorem \ref{main result1} and Theorem \ref{main result2} for the case $m+p>3$ and $0<p<1$.
\section*{Acknowledgement}
The author wishes to thank Ugo Gianazza, Tuomo Kuusi, Pekka Lehtel\"a, Peter Lindqvist and Vincenzo Vespri for the valuable discussions.

\bibliographystyle{abbrv}

\end{document}